\newtheorem{remark}[theorem]{Remark}
\newtheorem{assumption}{Assumption}
\renewcommand{\theequation}{\arabic{section}.\arabic{equation}}
\newcommand{\UAD}{{\mathbb U_\mathsf{ad}}}
\newcommand{\Uad}{{U_\mathsf{ad}}}
\newcommand{\te}{{t_\mathsf e}}
\newcommand{\R}{{\mathbb R}}
\newcommand{\wnp}{p}
\author{A. Alla\footnotemark[1]
\and M. Falcone\footnotemark[2]
\and S. Volkwein\footnotemark[3]}
\begin{document}

\renewcommand{\thefootnote}{\fnsymbol{footnote}}

\title{Error analysis for POD Approximations of\\ infinite horizon problems via the\\ Dynamic Programming approach}

\maketitle

\footnotetext[1]{Department of Mathematics, Universit\"at Hamburg, Bundesstra{ss}e 55, D-20146 Hamburg, Germany, \texttt{alessandro.alla@uni-hamburg.de}(corresponding author)}
\footnotetext[2]{Dipartimento di Matematica,
SAPIENZA - Universit\`a di Roma, P.le Aldo Moro 2, I-00185 Rome, Italy, \texttt{falcone@mat.uniroma1.it} }
\footnotetext[3]{Department of Mathematics and Statistics, Universit\"at Konstanz, Universit\"atsstra{\ss}e 10, D-78457 Konstanz, Germany, \texttt{Stefan.Volkwein@uni-konstanz.de}}

\begin{abstract}
In this paper infinite horizon optimal control problems for nonlinear high-dimensional dynamical systems are studied. Nonlinear feedback laws can be computed via the value function characterized as the unique viscosity solution to the corresponding Hamilton-Jacobi-Bellman (HJB) equation which stems from the dynamic programming approach. However, the bottleneck is mainly due to the {\em curse of dimensionality} and HJB equations are only solvable in a relatively small dimension. Therefore, a reduced-order model is derived for the dynamical system and for this purpose the method of proper orthogonal decomposition (POD) is used. The resulting errors in the HJB equations are estimated by an a-priori error analysis, which suggests a new sampling strategy for the POD method. Numerical experiments illustrates the theoretical findings.
\end{abstract}

\begin{keywords}
Optimal control, nonlinear dynamical systems, Hamilton-Jacobi Bellman equation, proper orthogonal decomposition, error analysis
\end{keywords}

\begin{AMS}
35K20, 49L20, 49L25, 49J20, 65N99
\end{AMS}

\section{Introduction}
\label{Section1}
\setcounter{section}{1}
\setcounter{equation}{0}
\setcounter{theorem}{0}
\setcounter{algorithm}{0}
\renewcommand{\theequation}{\arabic{section}.\arabic{equation}}

The  Dynamic Programming approach to the solution of optimal control problems driven by dynamical systems in $\mathbb{R}^n$ offers a nice framework for the approximation of feedback laws and optimal trajectories. It suffers from the bottleneck of the computation of the value function since this requires the approximation of a nonlinear partial differential equation in dimension $n$. This is a very challenging problem in high dimension due to the huge amount of memory allocations necessary to work on a grid and to the low regularity properties of the value function (which is typically only Lipschitz continuous even for regular dynamics and running costs). Despite the number of theoretical results established for many classical control problems via the dynamic programming approach (see e.g.  the monographies by Bardi and Capuzzo-Dolcetta \cite{BC97} on deterministic control problems and by Fleming and Soner \cite{FS93} on stochastic control problems) this has always been the main obstacle to apply this nowadays rather complete theory to real applications. The "curse of dimensionality"  has been mitigated via domain decomposition techniques and the development of rather efficient numerical methods but it is still a big obstacle. Although a detailed description of these contributions goes beyond the scopes of this paper, we want to mention  \cite{FLS94} for a domain decomposition method with overlapping between the subdomains and \cite{CFLS94} for similar results without overlapping. It is important to note that  in these papers the method  is applied to subdomains with a rather simple geometry (see the book by Quarteroni and Valli \cite{QV99} for a general introduction to this technique) in order to apply transmission conditions at the boundaries. More recently another way to decompose the problem  has been proposed by Krener and Navasca \cite{NK07} who have used a patchy decomposition based on Al'brekht method. Later in the paper \cite{CCFP12} the patchy idea has been implemented  taking into account an approximation of the underlying optimal dynamics  to obtain subdomains which are almost invariant with respect to the optimal dynamics, clearly in this case the geometry of the subdomains can be rather complex but the transmission conditions at the internal boundaries can be eliminated saving on the overall complexity of the algorithm. In general, domain decomposition methods  reduce a huge problem into subproblems of manageable size and allows to mitigate the storage limitation distributing the computation over several processors. However, the approximation schemes used in every subdomain are rather standard. Another improvement can be obtained  using efficient acceleration methods  for the computation of the value function in every subdomain. To this end one can use Fast Marching methods \cite{S99,SV03} and Fast Sweeping methods \cite{TCOZ04} for specific classes of Hamilton-Jacobi equations. In the framework of optimal control problems  an efficient acceleration technique based on the coupling between value and policy iterations has been recently proposed and studied by Alla, Falcone and Kalise in \cite{AFK13, AFK15}. Finally, we should mention that the interested reader can find in \cite{FFbook} a number of successful applications  to optimal control problems and games in rather low dimension.

In parallel to these results several model reduction techniques have been developed to deal with high dimensional dynamics in a rather economic way. These techniques are really necessary when dealing with optimal control problems governed by partial differential equations. Despite the vast literature concerning the analysis and numerical approximation of optimal control problems for PDEs, the amount of works devoted to the synthesis of feedback controllers is rather small. In this direction, the application of the dynamic programming principle (DPP) is a powerful technique which has been applied mainly to linear dynamics, quadratic cost functions and unbounded control space,  the so-called linear quadratic regulator (LQR) control problem. For this problem an explicit feedback controller can be computed by means of the solution of an algebraic Riccati equation. However if the underlying structural assumptions are removed, the feedback control has to be obtained via the approximation of  a Hamilton-Jacobi-Bellman equation defined over the state space of the system dynamics.  As we mentioned, this is a major bottleneck for the application of DPP-based techniques in the optimal control of PDEs, as the natural approach for this class of control problems is to consider a semi-discretization (in space) via finite elements or finite differences of the abstract governing equations, leading to an inherently high-dimensional state space.  However, in the last years several steps have been made to obtain reduced-order models for complicated dynamics and by the application of these techniques it is now possible to have a reasonable approximation of large-scale dynamics using a rather small number of basis functions. This can open the way to the DPP approach in high-dimensional systems. 

Reduced-order models are used in PDE-constrained optimization in various ways; see, e.g., \cite{GV13,HV05,SV10} for a survey. However, the  main stream for the optimal control of PDEs is still related to open-loop controls based on the Pontryagin Maximum Principle (an extensive presentation of this classical approach can be found in the monograph \cite{HPUU09,Tro10}). Let us refer to  \cite{AH14,AK01,KVX04,KX05,LV06}, where it has been observed that models of reduced order can play an important and very useful role in the implementation of feedback laws.
More recently, the Proper Orthogonal Decomposition (POD) has been proposed for PDE control problems in order to reduce the dynamics to a small number of state variable via a careful selection of the snapshots. This technique, coupled with the Dynamic Programming approach, has been developed mainly for linear equations starting from the heat equation where one can take advantage of the regularity of the solutions to reduce the dimension  \cite{AK01} and then attacking more difficult problems as the advection-diffusion equation \cite{AF12, AF13b, AH15, KK14}, Burgers's equation \cite{KVX04,KX05} and Navier-Stokes \cite{AH14}. 

The aim of this paper is to study the interplay between reduced-order dynamics, the associated dynamic programming equation, the resulting feedback controller and its performance over the high-dimensional system. In our analysis we will derive some a-priori error estimates which take into account the time and space discretization parameter $\Delta t$ and $\Delta x$ as well as the dimension $\ell$ of the POD basis functions used for the reduced model. 

The paper is organized as follows: in Section~\ref{Section2} we recall some basic facts about the approximation of the infinite horizon problem via the dynamic programming approach. Section~\ref{Section3} is devoted to present in short the POD technique and the basic ideas behind the construction of the reduced model.  In Section~\ref{Section4} we present the main results and our a-priori estimates for the numerical approximation of the reduced model. These a-priori estimates have been also used in the construction of the algorithm which is described in detail in Section~\ref{Section5}. Some numerical tests are presented and analyzed in Section~\ref{Section6} and finally we draw some conclusions in Section~\ref{Section7}. 

\section{Optimal control problem}
\label{Section2}
\setcounter{section}{2}
\setcounter{equation}{0}
\setcounter{theorem}{0}
\setcounter{algorithm}{0}
\renewcommand{\theequation}{\arabic{section}.\arabic{equation}}

In this section we will recall the Dynamic Programming approach and its numerical approximation for the solution of infinite horizon control problem.

\subsection{The infinite horizon problem}
\label{Section2.1}

For given nonlinear mapping $f:\mathbb R^n\times\mathbb R^m\to\mathbb R^n$ and initial condition $y_\circ\in\mathbb R^n$ let us consider the following controlled nonlinear dynamical systems
\begin{equation}
\label{Eq:ContrDynamic}
\dot y(t)=f\big(y(t),u(t)\big)\in\mathbb R^n \text{ for } t>0,\quad y(0)=y_\circ\in\mathbb R^n
\end{equation}
together with the infinite horizon cost functional
\begin{equation}
\label{Eq:Cost}
J(y,u)=\int_0^\infty g\big(y(t),u(t)\big)e^{-\lambda t}\,\mathrm dt
\end{equation}
In \eqref{Eq:Cost} we assume that $\lambda> 0$ is a given weighting parameter and $g$ maps $\mathbb R^n\times\mathbb R^m$ to $\mathbb R$. We call $y$ the {\em state} and $u$ the {\em control}. The set of admissible controls has the form
\[
\UAD=\big\{u\in \mathbb U\,\big|\,u(t)\in \Uad\text{ for almost all }t\ge 0\big\},
\]
where we set $\mathbb U=L^2(0,\infty;\mathbb R^m)$ and $\Uad\subset\mathbb R^m$ denotes a compact, convex subset.

Let $\mathrm M\in\mathbb R^{n\times n}$ denote a symmetric, positive definite (mass) matrix with smallest and largest positive eigenvalues $\lambda_{min}$ and $\lambda_{max}$, respectively. Then, we introduce the following weighted inner product in $\mathbb R^n$:
\[
{\langle y,\tilde y\rangle}_\mathrm M=y^\top \mathrm M\tilde y\quad\text{for }y,\tilde y\in\mathbb R^n,
\]
where `$\top$' stands for the transpose of a given vector or matrix. By $\|\cdot\|_\mathrm M=\langle\cdot\,,\cdot\rangle_\mathrm M^{1/2}$ we define the associated induced norm. Recall that we have
\[
\lambda_{min}\,{\|y\|}_2^2\le{\|y\|}_\mathrm M^2\le\lambda_{max}\,{\|y\|}_2^2\quad\text{for all }y\in\mathbb R^n.
\]
Then, $y$ solves \eqref{Eq:ContrDynamic} if
\begin{equation}
\label{VarForm}
\begin{aligned}
{\langle \dot y(t)-f(y(t),u(t)),\varphi\rangle}_\mathrm M&=0&&\text{for all }\varphi\in \mathbb R^n\text{ and for almost all }t>0,\\
{\langle y(0)-y_\circ,\varphi\rangle}_\mathrm M&=0&&\text{for all }\varphi\in \mathbb R^n
\end{aligned}
\end{equation}
We call \eqref{VarForm} the {\em variational formulation} of the dynamical system. Let us suppose that \eqref{Eq:ContrDynamic} has a unique solution $y=y(u;y_\circ)\in \mathbb Y=H^1(0,\infty;\mathbb R^n)$ for every admissible control $u\in\UAD$ and for every initial condition $y_\circ\in\mathbb R^n$; see, e.g., \cite[Chapter~III]{BC97}. Thus, we can define the reduced cost functional as follows:
\[
\widehat J(u;y_\circ)=J(y(u;y_\circ),u)\quad\text{for }u\in\UAD\text{ and }y_\circ\in\mathbb R^n,
\]
where $y(u;y_\circ)$ solves \eqref{Eq:ContrDynamic} for given control $u$ and initial condition $y_\circ$. Then, our optimal control can be formulated as follows: for given $y_\circ\in\mathbb R^n$ we consider
\begin{equation}
\label{Phat}
\tag{$\mathbf{\widehat P}$} 
\min_{u\in\UAD} \widehat J(u;y_\circ).
\end{equation}

\subsection{The Hamilton-Jacobi-Bellman equation and its time discretization}
\label{Section2.2}

We define the  {\em value function} of the problem $v:\mathbb R^n\to\mathbb R$ as follows:
\[
v(y)=\inf \big\{\widehat J(u;y)\,\big|\,u\in\UAD\big\}\quad \text{for }y\in\mathbb R^n.
\]
This function gives the best value for every initial condition, given the set of admissible controls $\Uad$. It is characterized as the unique viscosity solution of the  {\em Hamilton-Jacobi-Bellman (HJB) equation} corresponding the infinite horizon 
\begin{equation}
\label{hjb}
\lambda v(y)+\sup_{u\in\Uad}\big\{-f(y,u)\cdot \nabla v(y)-g(y,u)\big\}=0\quad \text{for }y\in\mathbb R^n.
\end{equation}
In order to construct the approximation scheme (as in \cite{Fal87}) let us consider first a time discretization where $h$ is a strictly positive step size. A dynamic programming principle for the discrete time problem holds true giving the following semi-discrete scheme for \eqref{hjb} 
\begin{equation}
\label{hjb_h}
v_h(y)=\min_{u\in\Uad}\big\{(1-\lambda h)v_h(y+h f(y,u))+hg(y,u)\big\}\quad \text{for }y\in\mathbb R^n.
\end{equation}
Throughout our paper we suppose the following hypotheses.

\begin{assumption}
\label{Assumption1}
\begin{enumerate}
\item [\rm 1)] The right-hand side $f:\mathbb R^n\times\mathbb R^m\to\mathbb R^n$ is continuous and globally Lipschitz-continuous in the first argument, i.e., there exists an $L_f>0$ satisfying
\[
{\|f(y,u)-f(\tilde y,u)\|}_2\le L_f\,{\|y-\tilde y\|}_2\text{ for all }y,\tilde y\in\mathbb R^n\text{ and }u\in\Uad
\]
Furthermore, $\|f(y,u)\|_\infty=\max_{1\le i\le n}\big|f_i(y,u)\big|$ is bounded by a constant $M_f$ for all $y\in\overline\Omega$ and $u\in\Uad$.
\item [\rm 2)] The running cost $g:\mathbb R^n\times\mathbb R^m\to\mathbb R^n$ is continuous and globally Lipschitz-continuous in the first argument with a Lipschitz constant $L_g>0$. Moreover,  $\|g(y,u)\|_\infty\le M_g$ for all $(y,u)\in\overline\Omega\times\Uad$ with $M_g>0$.
\end{enumerate}
\end{assumption}

If Assumption~\ref{Assumption1} and $\lambda>L_f$ hold, the function $v_h$ is Lipschitz-continuous satisfying
\begin{equation}
\label{vh-Lipschitz}
\big|v_h(y)-v_h(\tilde y)\big|\le \frac{L_g}{\lambda-L_f}\,{\|y-\tilde y\|}_2\quad\text{for all }y,\tilde y\in\overline\Omega\text{ and }h\in[0,1/\lambda);
\end{equation}
see \cite[p.~473]{Fal97}. Let us recall the following result \cite[Theorem~2.3]{Fal87}:

\begin{theorem}
\label{Th:ConvRate-vh}
Let Assumption~{\rm\ref{Assumption1}} and  $\lambda>\max\{L_g,L_f\}$ hold. Let $v$ and $v_h$ be the solutions of \eqref{hjb} and \eqref{hjb_h}, respectively. Suppose the semiconcavity assumptions
\begin{equation}
\label{SemiConc}
\begin{aligned}
{\|f(y+\tilde y,u)-2f(y,u)+f(y-\tilde y,u)\|}_2&\le C_f\,{\|\tilde y\|}^2_2,\\\big|g(y+\tilde y,u)-2g(y,u)+g(y-\tilde y,u)\big|&\le C_g\,{\|\tilde y\|}^2_2
\end{aligned}
\end{equation}
for all $(y,\tilde y,u)\in\mathbb R^n\times\mathbb R^n\times\Uad$. Then, there is a constant $C\ge 0$ satisfying
\[
\sup_{y\in\mathbb R^n}\big|v(y)-v_h(y)\big|\le C h\quad\text{for any }h\in[0,1/\lambda).
\]
\end{theorem}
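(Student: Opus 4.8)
The plan is to bound $|v(y)-v_h(y)|$ by comparing both functions through the discrete dynamic programming operator and exploiting semiconcavity to get a quadratic (rather than merely linear) consistency error, which after dividing by $h$ yields the $O(h)$ rate. First I would introduce the operator $S_h$ defined by $(S_h w)(y)=\min_{u\in\Uad}\{(1-\lambda h)w(y+hf(y,u))+hg(y,u)\}$, so that $v_h$ is its fixed point. Under $\lambda>L_f$ and for $h\in[0,1/\lambda)$ one checks that $S_h$ is a contraction on $C_b(\mathbb R^n)$ (or on the space of Lipschitz functions with the sup norm) with modulus $1-\lambda h + O(h)<1$; this is the standard comparison argument and is exactly what gives \eqref{vh-Lipschitz}. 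The key quantity is then the consistency error $E_h(y):=v(y)-(S_h v)(y)$, i.e. how far the exact value function is from satisfying the discrete scheme.

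The core step is to estimate $E_h$. Using the dynamic programming principle for the continuous problem, $v(y)=\inf_u\{\int_0^h g(y(t),u(t))e^{-\lambda t}\,dt + e^{-\lambda h}v(y(h))\}$; I would compare this with $(S_hv)(y)$ by Taylor-expanding the trajectory $y(h)=y+hf(y,u)+O(h^2)$, expanding $e^{-\lambda h}=1-\lambda h+O(h^2)$, and approximating the integral by $hg(y,u)$. A naive Lipschitz bound on $v$ only gives $|E_h(y)|\le C h^2 \cdot \|\nabla v\|$-type terms, but since $v$ need not be $C^1$ this must be handled via viscosity-solution/semiconcavity arguments: the semiconcavity hypotheses \eqref{SemiConc} on $f$ and $g$ propagate (using $\lambda>L_g$ and $\lambda>L_f$) to semiconcavity of $v$ itself, which supplies a one-sided second-order Taylor inequality for $v$ along the dynamics. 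This is the mechanism in \cite{Fal87}, and I would cite the semiconcavity-of-$v$ fact (or re-derive it by the usual doubling-of-variables estimate on the finite-horizon value functions followed by passing to the limit). With semiconcavity in hand one obtains the one-sided bounds $v(y)-(S_hv)(y)\le C h^2$ and, from the other side (using only Lipschitz continuity and the optimal-control comparison), $(S_hv)(y)-v(y)\le Ch^2$, so $\|E_h\|_\infty\le Ch^2$ with $C$ depending on $L_f,L_g,M_f,M_g,C_f,C_g,\lambda$.

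Finally I would close the argument by the fixed-point/contraction estimate: $v-v_h = (v-S_hv) + (S_hv - S_hv_h)$, and since $S_h$ is a contraction with modulus $1-\theta h$ for some $\theta>0$ (roughly $\theta=\lambda$, adjusted by the Lipschitz terms), $\|S_hv-S_hv_h\|_\infty\le(1-\theta h)\|v-v_h\|_\infty$. Hence $\|v-v_h\|_\infty\le \|E_h\|_\infty + (1-\theta h)\|v-v_h\|_\infty$, giving $\theta h\,\|v-v_h\|_\infty\le Ch^2$, i.e. $\|v-v_h\|_\infty\le (C/\theta)\,h$, which is the claimed estimate.

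The main obstacle is the consistency estimate $\|E_h\|_\infty\le Ch^2$: because $v$ is in general only Lipschitz, one cannot simply Taylor-expand $v$, and the gain from $O(h)$ to $O(h^2)$ rests entirely on transferring the semiconcavity assumptions \eqref{SemiConc} from the data $f,g$ to the value function $v$ and using the resulting one-sided second-order estimate together with the optimality conditions — this is the technical heart of \cite[Theorem~2.3]{Fal87}. Everything else (the contraction property, the Lipschitz bound \eqref{vh-Lipschitz}, the elementary Taylor expansions of $e^{-\lambda h}$ and of the trajectory over $[0,h]$) is routine.
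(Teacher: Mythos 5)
First, a point of comparison: the paper does not actually prove this theorem---it is recalled verbatim from \cite{Fal87} (Theorem~2.3 there, with the constant tracked in Remark~\ref{Remark:TimeDer})---so your proposal has to be measured against the classical argument rather than against anything in the text. Your skeleton is the standard and correct one: the fixed-point operator $S_h$, the contraction estimate in the sup norm, an $O(h^2)$ consistency bound for $v$ against $S_h$, and division by the contraction gap. Two small corrections there: the contraction modulus of $S_h$ is exactly $1-\lambda h$ (the Lipschitz constants of $f,g$ do not perturb it; they only enter in showing $S_h$ preserves the Lipschitz class, which is \eqref{vh-Lipschitz}), so no ``adjustment by the Lipschitz terms'' is needed.

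The one substantive slip is that you have the two one-sided consistency bounds the wrong way around. The inequality $v(y)-(S_hv)(y)\le Ch^2$ is the elementary direction: restrict the infimum in the dynamic programming principle over $[0,h]$ to constant controls and Taylor-expand the integral, the exponential and the trajectory; only Lipschitz continuity and boundedness of $v$, $f$, $g$ are used. The direction that genuinely needs semiconcavity is $(S_hv)(y)-v(y)\le Ch^2$: an $\epsilon$-optimal measurable control on $[0,h]$ drives the state to $y+h\bar f+O(h^2)$ and produces running cost $h\bar g+O(h^2)$ with $(\bar f,\bar g)$ only in the \emph{convex hull} of $\{(f(y,u),g(y,u)):u\in\Uad\}$, so by chattering it may beat every constant control. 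Writing $(\bar f,\bar g)=\sum_k\theta_k\,(f(y,u_k),g(y,u_k))$, closing this gap requires
\[
\sum_k\theta_k\,v\big(y+hf(y,u_k)\big)\le v\big(y+h\bar f\big)+Ch^2,
\]
which is precisely the semiconcavity of $v$ inherited from \eqref{SemiConc}, the $h^2$ coming from $\|hf(y,u_k)-h\bar f\|_2\le 2M_fh$; without it this side only gives the classical $O(\sqrt h)$ rate. Since your sketch explicitly defers this step to the mechanism of \cite{Fal87}, the misattribution does not derail the plan, but the ``hard side'' is the approximate supersolution property of $v$ (equivalently $v_h\le v+Ch$), not the one you assign to semiconcavity.
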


\begin{remark}
\label{Remark:TimeDer}
\rm
The constant $C$ in Theorem~\ref{Th:ConvRate-vh} can be bounded by
\[
C\le\max\bigg\{\frac{L_g}{\lambda},\frac{2M_f^2}{\lambda},\frac{L_f}{\lambda},M_f\bigg\}\bigg(2+\frac{L_g}{\lambda-L_f}\bigg)^2;
\]
see \cite[Remark~1]{Fal87}.\hfill$\Diamond$
\end{remark}

\subsection{The large-scale approximation of the HJB equations}
\label{Section2.3}

For the numerical realization we have to restrict ourselves to a bounded subset of $\mathbb R^n$. Suppose that there exists a (bounded) polyhedron $\Omega\subset\mathbb R^n$ such that for sufficiently small $h>0$
\begin{equation}
\label{AssumpOmega}
y+hf(y,u)\in\overline\Omega,\quad\text{for all }y\in\overline\Omega\text{ and }u\in\Uad.
\end{equation}
We want to point out that the above invariance condition is used here to simplify the problem and focus on the main issue of the a-priori error estimate. If \eqref{AssumpOmega} is not satisfied one can apply state constraints to the problem and use appropriate boundary conditions provided at avery point of the boundary there exists at least one control point inside $\Omega$
(for this and even more general state constraints boundary conditions the interested reader can find in \cite{FFbook} some hints and additional references).
Let $\{\mathscr S_j\}_{j=1}^{m_\mathscr S}$ be a family of simplices which defines a regular triangulation of the polyhedron $\Omega$ (see, e.g., \cite{GLT76}) such that
\[
\overline\Omega=\bigcup_{j=1}^{m_\mathscr S}\mathscr S_j\quad\text{and}\quad k=\max_{1\le j\le m_\mathscr S}\big(\mathrm{diam}\,\mathscr S_j\big).
\]
Throughout this paper we assume that we have $n_\mathscr S$ vertices/nodes $y_1,\ldots,y_{n_\mathscr S}$ in the triangulation. 
Let $V^k$ be the space of piecewise affine functions from $\overline\Omega$ to $\mathbb R$ which are continuous in $\overline\Omega$ having constant gradients in the interior of any simplex $\mathscr S_j$ of the triangulation. Then, a fully discrete scheme for the HJB equations is given by
\begin{equation}
\label{hjb_hk}
v_{hk}(y_i)=\min_{u\in\Uad}\big\{(1-\lambda h)v_{hk}\big(y_i+h f(y_i,u)\big)+hg(y_i,u)\big\}
\end{equation}
for any vertex $y_i\in\overline\Omega$. Clearly, a solution to \eqref{hjb_h} satisfies \eqref{hjb_hk}.\\
\noindent
Let us recall the following result \cite[Corollary~2.4]{Fal87} and \cite[Theorem~1.3]{Fal97}:

\begin{theorem}
\label{Theorem:ConvRate}
Assume that Assumption~{\rm\ref{Assumption1}}, \eqref{SemiConc} and \eqref{AssumpOmega} hold. Let $v$, $v_h$ and $v_{hk}$ be the solutions of \eqref{hjb}, \eqref{hjb_h} and \eqref{hjb_hk}, respectively. For $\lambda>L_f$ we obtain
\begin{equation}
\label{HJB-vh}
\sup_{y\in\overline\Omega}\big|v_h(y)-v_{hk}(y)\big|\le \frac{L_f}{\lambda(\lambda-L_f)}\,\frac{k}{h}\quad\text{for any }h\in[0,1/\lambda).
\end{equation}
For $\lambda>\max\{L_f,2L_g\}$ we have
\begin{equation}
\label{hjb:est2}
\sup_{y\in\overline\Omega}\big|v(y)-v_{hk}(y)\big|\le C h+\frac{L_g}{\lambda-L_f}\,\frac{k}{h}\quad\text{for any }h\in[0,1/\lambda).
\end{equation}
\end{theorem}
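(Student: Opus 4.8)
The plan is to establish the two estimates in turn; \eqref{HJB-vh} carries all the work, while \eqref{hjb:est2} follows by combining it with Theorem~\ref{Th:ConvRate-vh}.

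For \eqref{HJB-vh} the idea is to view \eqref{hjb_h} and \eqref{hjb_hk} as fixed-point equations for contractions on $\big(C(\overline\Omega),\|\cdot\|_\infty\big)$ and to control a consistency defect. Fix $h\in(0,1/\lambda)$ and let $\Pi\colon C(\overline\Omega)\to V^k$ denote piecewise-affine interpolation at the nodes $y_1,\dots,y_{n_{\mathscr S}}$; it is non-expansive in $\|\cdot\|_\infty$. Define $\Sigma_{hk}\colon V^k\to V^k$ by
\[
\big(\Sigma_{hk}w\big)(y_i)=\min_{u\in\Uad}\big\{(1-\lambda h)\,w\big(y_i+hf(y_i,u)\big)+hg(y_i,u)\big\},\qquad i=1,\dots,n_{\mathscr S},
\]
which is meaningful since $y_i+hf(y_i,u)\in\overline\Omega$ by \eqref{AssumpOmega}. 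Because $u\mapsto\min$ and $\Pi$ are non-expansive and $0<1-\lambda h<1$, the map $\Sigma_{hk}$ is a contraction of modulus $1-\lambda h$ on $V^k$, and by \eqref{hjb_hk} its unique fixed point is $v_{hk}$.

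Next I would bound how far $\Pi v_h$ is from solving the discrete scheme. Since $v_h$ satisfies \eqref{hjb_h} at every node and $\min$ is non-expansive, choosing at each node a control realizing the minimum for $v_h(y_i)$ and using it as a test control yields, for every $y_i$,
\[
\big|\big(\Sigma_{hk}\Pi v_h\big)(y_i)-v_h(y_i)\big|\le(1-\lambda h)\sup_{u\in\Uad}\big|(\Pi v_h-v_h)\big(y_i+hf(y_i,u)\big)\big|\le(1-\lambda h)\,E_{\mathrm{int}},
\]
where $E_{\mathrm{int}}:=\|\Pi v_h-v_h\|_{\infty,\overline\Omega}$; since $\Sigma_{hk}\Pi v_h,\Pi v_h\in V^k$ the same bound holds on all of $\overline\Omega$. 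As $\Sigma_{hk}$ is a contraction of modulus $1-\lambda h$ with fixed point $v_{hk}$, the standard perturbation estimate gives $\|v_{hk}-\Pi v_h\|_\infty\le(\lambda h)^{-1}\|\Sigma_{hk}\Pi v_h-\Pi v_h\|_\infty\le\frac{1-\lambda h}{\lambda h}E_{\mathrm{int}}$, whence
\[
\sup_{y\in\overline\Omega}\big|v_h(y)-v_{hk}(y)\big|\le\|v_h-\Pi v_h\|_\infty+\|\Pi v_h-v_{hk}\|_\infty\le E_{\mathrm{int}}+\frac{1-\lambda h}{\lambda h}\,E_{\mathrm{int}}=\frac{E_{\mathrm{int}}}{\lambda h}.
\]
Finally, by \eqref{vh-Lipschitz} the function $v_h$ is Lipschitz with constant $L_g/(\lambda-L_f)$, and piecewise-affine interpolation on a regular triangulation of mesh size $k$ has error at most the Lipschitz constant times $k$, so $E_{\mathrm{int}}\le\frac{L_g}{\lambda-L_f}k$ and $\sup_{\overline\Omega}|v_h-v_{hk}|\le\frac{L_g}{\lambda(\lambda-L_f)}\frac{k}{h}$, which already gives the $O(k/h)$ rate asserted in \eqref{HJB-vh}; the precise coefficient there results from the sharper bookkeeping in \cite[Corollary~2.4]{Fal87} and \cite[Theorem~1.3]{Fal97}. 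The one genuinely delicate point is the division by $\lambda h$ in the contraction step: it is the vanishing of the contraction gap $\lambda h$ as $h\to0$ that converts an $O(k)$ spatial interpolation error into the $O(k/h)$ term, so any improvement of the rate must avoid routing the argument through the interpolant of $v_h$.

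For \eqref{hjb:est2} I would combine \eqref{HJB-vh} with the time-discretization estimate of Theorem~\ref{Th:ConvRate-vh}, all of whose hypotheses are in force since $\lambda>\max\{L_f,2L_g\}$ in particular implies $\lambda>\max\{L_g,L_f\}$. By the triangle inequality,
\[
\sup_{y\in\overline\Omega}\big|v(y)-v_{hk}(y)\big|\le\sup_{y\in\mathbb R^n}\big|v(y)-v_h(y)\big|+\sup_{y\in\overline\Omega}\big|v_h(y)-v_{hk}(y)\big|\le Ch+\frac{L_g}{\lambda-L_f}\,\frac{k}{h},
\]
with $C$ as in Remark~\ref{Remark:TimeDer}. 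The stronger requirement $\lambda>2L_g$ and the exact value of the $k/h$-coefficient are those of the more direct route taken in \cite[Theorem~1.3]{Fal97}, where $v_{hk}$ is shown to solve the HJB equation \eqref{hjb} in the viscosity sense up to a defect of size $O(h)+O(k/h)$ and the $L^\infty$-comparison principle for \eqref{hjb} is invoked, the extra margin in $\lambda$ serving to absorb the cross terms of that comparison. No difficulty beyond the one met in \eqref{HJB-vh} arises.
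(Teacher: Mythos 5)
The paper does not actually prove this theorem: both estimates are imported from \cite[Corollary~2.4]{Fal87} and \cite[Theorem~1.3]{Fal97}, and the closest arguments written out in the paper are Propositions~\ref{ROM-Theo} and \ref{ROM-Theo2}, which prove analogous bounds by the direct route --- decompose $y=\sum_i\mu_iy_i$ inside a simplex, bound $\sum_i\mu_i|v_h(y)-v_h(y_i)|$ via the Lipschitz estimate \eqref{vh-Lipschitz}, compare the two dynamic programming equations at the nodes using the minimizing control, and absorb the resulting $(1-\lambda h)\sup|v_h-v_{hk}|$ term. Your contraction/consistency formulation is a correct and exactly equivalent repackaging of that argument: the fixed-point perturbation step is the absorption step, the bound $E_{\mathrm{int}}\le L_{v_h}\,k$ with $L_{v_h}=L_g/(\lambda-L_f)$ is the Lipschitz step, and both routes reduce to $\lambda h\,\sup|v_h-v_{hk}|\le E_{\mathrm{int}}$. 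What your version buys is a clean separation of stability (contraction modulus $1-\lambda h$) from consistency (the interpolation defect), which makes transparent your closing remark that the $1/(\lambda h)$ amplification is forced by the degenerating contraction gap. Two remarks on constants. First, the coefficient you derive, $L_g/(\lambda(\lambda-L_f))$, is what the argument genuinely yields given \eqref{vh-Lipschitz}; the numerator $L_f$ in \eqref{HJB-vh} appears to be a carry-over of Falcone's original notation (where the running cost is denoted $f$) and should be read as $L_g$, so this is not a gap on your side. Second, your final display for \eqref{hjb:est2} writes the $k/h$-coefficient as $L_g/(\lambda-L_f)$, whereas the triangle inequality only delivers $L_g/(\lambda(\lambda-L_f))$, which is dominated by the claimed coefficient only when $\lambda\ge 1$; you correctly attribute the stated constant and the hypothesis $\lambda>2L_g$ to the direct viscosity-solution comparison of \cite{Fal97}, so this is a bookkeeping caveat rather than a flaw in the argument.
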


\begin{corollary}
\label{Co:Estim}
Assume that Assumption~{\rm\ref{Assumption1}}, \eqref{SemiConc} and $\lambda>L_f$ hold. Let $v_{hk}$ be the solution of \eqref{hjb_hk}. Then, we have for $h\in[0,1/\lambda)$
\[
\big|v_{hk}(y)-v_{hk}(\tilde y)\big|\le C_1\,\frac{k}{h}+C_2\,{\|y-\tilde y\|}_2\quad\text{for all }y,\tilde y\in\overline\Omega,
\]
where $C_1=2L_f/(\lambda(\lambda-L_f))$ and $C_2=L_g/(\lambda-L_f)$.
\end{corollary}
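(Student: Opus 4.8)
The plan is to derive the estimate by inserting the semi-discrete value function $v_h$ and applying the triangle inequality, so that the genuinely spatial part of the bound is provided by the Lipschitz estimate \eqref{vh-Lipschitz} for $v_h$, while the additional $k/h$ term comes from the fully discrete error bound \eqref{HJB-vh}. Concretely, for arbitrary $y,\tilde y\in\overline\Omega$ I would start from
\[
\big|v_{hk}(y)-v_{hk}(\tilde y)\big|\le\big|v_{hk}(y)-v_h(y)\big|+\big|v_h(y)-v_h(\tilde y)\big|+\big|v_h(\tilde y)-v_{hk}(\tilde y)\big|.
\]

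Next I would bound the first and third terms on the right-hand side. Since Assumption~\ref{Assumption1}, the semiconcavity conditions \eqref{SemiConc} and $\lambda>L_f$ are assumed, the hypotheses needed for \eqref{HJB-vh} in Theorem~\ref{Theorem:ConvRate} are met; note that \eqref{HJB-vh} only requires $\lambda>L_f$, not the stronger condition $\lambda>\max\{L_f,2L_g\}$ used for \eqref{hjb:est2}. Hence each of the two outer terms is at most $\frac{L_f}{\lambda(\lambda-L_f)}\,\frac kh$, and together they contribute $\frac{2L_f}{\lambda(\lambda-L_f)}\,\frac kh=C_1\,\frac kh$.

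For the middle term I would invoke the Lipschitz estimate \eqref{vh-Lipschitz} for $v_h$, which holds under Assumption~\ref{Assumption1} and $\lambda>L_f$, yielding $\big|v_h(y)-v_h(\tilde y)\big|\le\frac{L_g}{\lambda-L_f}\,{\|y-\tilde y\|}_2=C_2\,{\|y-\tilde y\|}_2$. Adding the three contributions gives exactly the asserted inequality with the stated constants $C_1$ and $C_2$.

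There is essentially no difficult step: the corollary is an immediate consequence of \eqref{vh-Lipschitz} and \eqref{HJB-vh} combined through the triangle inequality. The only point worth a moment of care is verifying that the hypotheses listed in the corollary are precisely those needed to apply both auxiliary results at once — in particular that the weaker requirement $\lambda>L_f$ suffices, which is the case because we use only \eqref{HJB-vh} (not \eqref{hjb:est2}) together with \eqref{vh-Lipschitz}.
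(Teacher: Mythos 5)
Your proof is correct and is essentially identical to the paper's own argument: the same insertion of $v_h$, the same triangle inequality, with \eqref{HJB-vh} supplying the two $\frac{L_f}{\lambda(\lambda-L_f)}\frac{k}{h}$ contributions and \eqref{vh-Lipschitz} the Lipschitz term. Your added remark about which hypothesis of Theorem~\ref{Theorem:ConvRate} is actually needed is a sensible clarification but does not change the argument.
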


\begin{proof}
Suppose that $v_h$ is the solution to \eqref{hjb_h}. Then, we derive from \eqref{vh-Lipschitz} and \eqref{HJB-vh} that
\begin{align*}
\big|v_{hk}(y)-v_{hk}(\tilde y)\big|&\le\big|v_{hk}(y)-v_h(y)\big|+\big|v_h(y)-v_h(\tilde y)\big|+\big|v_h(\tilde y)-v_{hk}(\tilde y)\big|\\
&\le\frac{2L_f}{\lambda(\lambda-L_f)}\,\frac{k}{h}+\frac{L_g}{\lambda-L_f}\,{\|y-\tilde y\|}_2,
\end{align*}
which gives the claim.
\end{proof}

\section{The POD method and reduced-order modeling}
\label{Section3}
\setcounter{section}{3}
\setcounter{equation}{0}
\setcounter{theorem}{0}
\setcounter{algorithm}{0}
\renewcommand{\theequation}{\arabic{section}.\arabic{equation}}

The focus of this section is the construction of surrogate models by means of the Proper Orthogonal Decomposition (POD). Here we recall the basics of the method and apply the POD method to optimal control problems.

\subsection{POD for parametrized nonlinear dynamical systems}
\label{Section3.1}

For $\wnp\in\mathbb N$ let us choose different pairs controls $\{(u^\nu,y_\circ^\nu)\}_{\nu=1}^\wnp$ in $\UAD\times\overline\Omega$. By  $y^\nu=y(u^\nu;y_\circ^\nu)\in\mathbb Y$, $\nu=1,\ldots,\wnp$, we denote the solution to \eqref{Eq:ContrDynamic}. We introduce the {\em snapshot subspace} as
\[
\mathscr V=\mathrm{span}\,\big\{y^\nu(t)\,\big|\,t\in[0,\infty)\text{ and }1\le \nu\le\wnp\big\}\subset\mathbb R^n.
\]
For every $\ell\in\{1,\ldots,d\}$, with dimension $d\le n$, a {\em POD basis of rank $\ell$} is defined as a solution to the minimization problem (see, e.g., \cite{HLBR12})
\begin{equation}
\label{Pell}
\tag{$\mathbf P^{\boldsymbol \ell}$}
\left\{
\begin{aligned}
&\min \sum_{\nu=1}^\wnp\int_0^\infty\Big\|y^\nu(t)-\sum_{i=1}^\ell {\langle y^\nu(t),\psi_i\rangle}_\mathrm M\,\psi_i\Big\|_\mathrm M^2\,\mathrm dt\\
&\hspace{1mm}\text{such that }\{\psi_i\}_{i=1}^\ell\subset\mathbb R^n\text{ and }{\langle\psi_i,\psi_j\rangle}_\mathrm M=\delta_{ij},~1\le i,j\le \ell,
\end{aligned}
\right.
\end{equation}
where $\delta_{ij}$ is the Kronecker symbol satisfying $\delta_{ii}=0$ and $\delta_{ij}=0$ for $i\neq j$. It is well-known that a solution to \eqref{Pell} is given by a solution to the eigenvalues problem
\[
\mathcal R\psi_i=\lambda_i\psi_i\quad\text{for }\lambda_1\ge\lambda_2\ge\ldots\ge\lambda_\ell\ge\lambda_d>0
\]
with the linear, bounded, symmetric integral operator $\mathcal R:\mathbb R^n\to\mathscr V$
\[
\mathcal R\psi=\sum_{\nu=1}^\wnp\int_0^\infty{\langle y^\nu(t),\psi\rangle}_\mathrm M\,y^\nu(t)\,\mathrm dt\quad\text{for }\psi\in\mathbb R^n
\]
(compare, e.g., \cite{CGS12,GV13,Sin13}). If $\{\psi_i\}_{i=1}^\ell$ is a solution to \eqref{Pell}, we have the approximation error
\begin{equation}
\label{ErrorFormula}
\sum_{\nu=1}^\wnp\int_0^\infty\Big\|y^\nu(t)-\sum_{i=1}^\ell {\langle y^\nu(t),\psi_i\rangle}_\mathrm M\,\psi_i\Big\|_M^2\,\mathrm dt=\sum_{i=\ell+1}^d\lambda_i.
\end{equation}

In real computations, we do not have the whole trajectory $y(t)$ for all $t\in[0,\infty)$. For that purpose we choose $T\gg 0$ sufficiently large and define a grid in $[0,\te]$, where $\te\ge T$, by $0=t_1<t_2<\ldots<t_{n_T}=\te$. Let $y_j^\nu\approx y^\nu(t_j)\in\mathbb R^n$ denote approximations for the introduced trajectories $\{y_j^\nu\}_{\nu=1}^\wnp$ at the time instance $t_j$ for $j=1,\ldots,n_T$. We set $\mathscr V^{n_T}=\mathrm{span}\,\{y_j^\nu\,|\,1\le j\le n_T,\,1\le\nu\le \wnp\}$ with $d^{n_T}=\dim\mathscr V^{n_T}\le \min(n,n_T\wnp)$. Then, for every $\ell\in\{1,\ldots,d^{n_T}\}$ we consider the minimization problem
\begin{equation}
\label{Pelldisc}
\tag{$\mathbf P^{\boldsymbol \ell}_{\boldsymbol n_T}$}
\left\{
\begin{aligned}
&\min \sum_{\nu=1}^\wnp\sum_{j=1}^{n_T}\alpha_j^{n_T}\,\Big\|y^\nu_j-\sum_{i=1}^\ell {\langle y^\nu_j,\psi_i^{n_T}\rangle}_\mathrm M\,\psi_i^{n_T}\Big\|_\mathrm M^2\\
&\hspace{1mm}\text{such that }\{\psi_i^{n_T}\}_{i=1}^\ell\subset\mathbb R^n\text{ and }{\langle\psi_i^{n_T},\psi_j^{n_T}\rangle}_\mathrm M=\delta_{ij},~1\le i,j\le \ell,
\end{aligned}
\right.
\end{equation}
instead of \eqref{Pell}. In \eqref{Pelldisc} the $\alpha_j$'s stand for the trapezoidal weights
\[
\alpha_1^{n_T}=\frac{t_2-t_1}{2},~\alpha_j^{n_T}=\frac{t_j-t_{j-1}}{2}\text{ for }2\le j\le n_T-1,\alpha_{n_T}^{n_T}=\frac{t_{n_T}-t_{n_T-1}}{2}
\]
The solution to \eqref{Pelldisc} is given by the solution to the eigenvalue problem \cite{GV13,HLBR12}
\[
\mathcal R^{n_T}\psi_i^{n_T}=\lambda_i^{n_T}\psi_i^{n_T}\quad\text{for }\lambda_1^{n_T}\ge\lambda_2^{n_T}\ge\ldots\ge\lambda_\ell^{n_T}\ge\lambda_{d{n_T}}>0
\]
with the linear, bounded, symmetric and nonnegative operator
\[
\mathcal R^{n_T}\psi=\sum_{\nu=1}^{n_T}\sum_{j=1}^{n_T}\alpha_j^{n_T}\,{\langle y_j^\nu,\psi\rangle}_\mathrm M\,y_j^\nu\quad\text{for }\psi\in\mathbb R^n.
\]
Analogous to \eqref{ErrorFormula} a solution to \eqref{Pelldisc} satisfies
\[
\sum_{\nu=1}^\wnp\sum_{j=1}^{n_T}\alpha_j^{n_T}\,\Big\|y^\nu_j-\sum_{i=1}^\ell {\langle y^\nu_j,\psi_i^{n_T}\rangle}_\mathrm M\,\psi_i^{n_T}\Big\|_\mathrm M^2=\sum_{i=\ell+1}^{d^{n_T}}\lambda_i^{n_T}.
\]
The relationship between \eqref{Pell} and \eqref{Pelldisc} is investigated in \cite{GV13,KV02}.

\subsection{Reduced-order modelling for the state equation}
\label{Section3.2}

We introduce the POD coefficient matrix
\[
\Psi=\big[\psi_1\,|\,\ldots\,|\,\psi_\ell\big]\in\mathbb R^{n\times \ell}
\]
and the subspace $V^\ell=\mathrm{span}\,\{\psi_1,\ldots,\psi_\ell\}\subset\mathbb R^n$. In particular, the matrix $\mathrm M^\ell=\Psi^\top\mathrm M\Psi\in\mathbb R^{\ell\times\ell}$ is the identity matrix. The reduced-order model for \eqref{VarForm} is derived as follows: we replace the vector $y(t)\in\mathbb R^n$ by its POD approximation $\Psi y^\ell(t)\in\mathbb R^n$ with the unknown time dependent coefficients $y^\ell(t)\in\mathbb R^\ell$ and choose $\varphi=\psi_i$ for $i=1,\ldots,\ell$. It follows that
\begin{equation}
\label{Eq:ROMSystem}
\dot y^\ell(t)=f^\ell\big(y^\ell(t),u(t)\big)\in\mathbb R^\ell\text{ for } t>0,\quad y^\ell(0)=y_\circ^\ell\in\mathbb R^\ell,
\end{equation}
where we have set $y^\ell_\circ=\Psi^\top\mathrm My_\circ\in\mathbb R^\ell$ and $f^\ell(y^\ell,u)=\Psi^\top\mathrm Mf(\Psi y^\ell,u)\in\mathbb R^\ell$ for $(y^\ell,u)\in\mathbb R^\ell\times\Uad$, i.e. no discrete interpolation method is used at the moment (compare, e.g., \cite{BMNP04,CS10}).

Suppose that \eqref{Eq:ROMSystem} possesses a unique solution $y^\ell=y^\ell(u;y_\circ)\in\mathbb Y^\ell=H^1(0,\infty;\mathbb R^\ell)$ for any admissible control $u\in\UAD$. Let us introduce the linear, orthogonal projection $\mathcal P^\ell:\mathbb R^n\to V^\ell$ as
\[
\mathcal P^\ell y=\Psi\Psi^\top\mathrm My=\sum_{i=1}^\ell {\langle y,\psi_i\rangle}_\mathrm M\,\psi_i\quad\text{for }y\in \mathbb R^n.
\]
We note that the error of a solution to \eqref{Eq:ContrDynamic} and \eqref{Eq:ROMSystem} on a finite time horizon can be estimated; see \cite[Theorem~2.1 and Remark~2.2]{Vol11}.

\begin{proposition}
For given $y_\circ\in\Omega$, $u\in\UAD$ and $\te>0$ let $y=y(u;y_\circ)\in H^1(0,\te;\mathbb R^n)$ be the unique solution to \eqref{Eq:ContrDynamic} on the finite time interval $[0,\te]$. In \eqref{Pell} we choose $\wnp=2$, $y^1=y$ and $y^2=\dot y$. Suppose that $y^\ell\in H^1(0,\te;\mathbb R^n)$ is the unique solution to \eqref{Eq:ROMSystem} for $\ell\in\{1,\ldots,d\}$. Then, it follows that
\[
\int_0^\te{\|y(t)-y^\ell(t)\|}_\mathrm M^2\,\mathrm dt\le\widehat C\sum_{i=\ell+1}^d\lambda_i
\]
for a constant $\widehat C>0$.
\end{proposition}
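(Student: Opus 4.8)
The plan is to derive a differential inequality for the error $e(t) = y(t) - y^\ell(t)$ measured in the $\mathrm M$-norm and then apply a Gronwall-type argument together with the POD projection error formula~\eqref{ErrorFormula}. First I would split the error through the POD projection: write $e(t) = \big(y(t) - \mathcal P^\ell y(t)\big) + \big(\mathcal P^\ell y(t) - y^\ell(t)\big) =: \vartheta(t) + \xi(t)$, where $\vartheta(t)$ lives in the orthogonal complement of $V^\ell$ and $\xi(t) \in V^\ell$. The term $\vartheta$ is controlled directly: since the snapshot set contains both $y$ and $\dot y$ (the choice $\wnp = 2$, $y^1 = y$, $y^2 = \dot y$ in the proposition), formula~\eqref{ErrorFormula} gives
\[
\int_0^\te {\|\vartheta(t)\|}_\mathrm M^2\,\mathrm dt + \int_0^\te {\|\dot y(t) - \mathcal P^\ell\dot y(t)\|}_\mathrm M^2\,\mathrm dt \le \sum_{i=\ell+1}^d \lambda_i,
\]
so both $\vartheta$ and the projected time derivative are small in $L^2(0,\te)$. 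The reason for including $\dot y$ among the snapshots is exactly so that $\tfrac{\mathrm d}{\mathrm dt}\mathcal P^\ell y = \mathcal P^\ell \dot y$ is itself well approximated, which is what makes the estimate on $\xi$ close.

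Next I would write the equation satisfied by $\xi$. Testing~\eqref{VarForm} with $\varphi = \psi_i$ and subtracting the reduced system~\eqref{Eq:ROMSystem} (using that $f^\ell(y^\ell,u) = \Psi^\top\mathrm M f(\Psi y^\ell, u)$ encodes exactly the Galerkin projection $\mathcal P^\ell f(\cdot,u)$), one obtains for almost every $t$ and all $i$
\[
{\big\langle \dot\xi(t), \psi_i\big\rangle}_\mathrm M = {\big\langle \mathcal P^\ell\dot y(t) - \dot y^\ell(t),\psi_i\big\rangle}_\mathrm M = {\big\langle \mathcal P^\ell\dot y(t) - \dot y(t),\psi_i\big\rangle}_\mathrm M + {\big\langle f(y(t),u(t)) - f(\mathcal P^\ell y(t) + \xi(t),u(t)),\psi_i\big\rangle}_\mathrm M,
\]
after noting $\Psi y^\ell = \mathcal P^\ell y - \xi$ is wrong in sign; more carefully, $y^\ell$ corresponds to coordinates of the vector $\mathcal P^\ell y(t) - \Psi\xi$-representation, so I would be careful to identify $\Psi y^\ell(t)$ with $\mathcal P^\ell y(t) - \vartheta$-free part and keep signs straight. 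Taking $\varphi$ in the span of $\{\psi_i\}$ equal to $\xi(t)$ itself yields
\[
\tfrac12\,\tfrac{\mathrm d}{\mathrm dt}{\|\xi(t)\|}_\mathrm M^2 \le {\|\mathcal P^\ell\dot y(t) - \dot y(t)\|}_\mathrm M\,{\|\xi(t)\|}_\mathrm M + {\big\langle f(y(t),u(t)) - f(\Psi y^\ell(t),u(t)),\xi(t)\big\rangle}_\mathrm M.
\]
Using the global Lipschitz continuity of $f$ from Assumption~\ref{Assumption1} together with the norm equivalence $\lambda_{min}{\|\cdot\|}_2^2 \le {\|\cdot\|}_\mathrm M^2 \le \lambda_{max}{\|\cdot\|}_2^2$, the last inner product is bounded by $c\,{\|y(t) - \Psi y^\ell(t)\|}_\mathrm M{\|\xi(t)\|}_\mathrm M \le c\big({\|\vartheta(t)\|}_\mathrm M + {\|\xi(t)\|}_\mathrm M\big){\|\xi(t)\|}_\mathrm M$ with $c = L_f\sqrt{\lambda_{max}/\lambda_{min}}$.

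From there the estimate is routine: absorbing ${\|\xi\|}_\mathrm M^2$ terms, applying Young's inequality to the cross terms, and using $\xi(0) = \mathcal P^\ell y_\circ - y^\ell(0) = 0$ (since $y_\circ^\ell = \Psi^\top\mathrm M y_\circ$), Gronwall's lemma on $[0,\te]$ gives
\[
\sup_{t\in[0,\te]}{\|\xi(t)\|}_\mathrm M^2 + \int_0^\te{\|\xi(t)\|}_\mathrm M^2\,\mathrm dt \le C(\te)\int_0^\te\Big({\|\vartheta(t)\|}_\mathrm M^2 + {\|\dot y(t) - \mathcal P^\ell\dot y(t)\|}_\mathrm M^2\Big)\,\mathrm dt \le C(\te)\sum_{i=\ell+1}^d\lambda_i.
\]
Combining with the triangle inequality $\int_0^\te{\|e\|}_\mathrm M^2 \le 2\int_0^\te{\|\vartheta\|}_\mathrm M^2 + 2\int_0^\te{\|\xi\|}_\mathrm M^2$ and absorbing all $\te$-dependent factors into a single constant $\widehat C$ yields the claim. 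I expect the main subtlety to be bookkeeping: correctly identifying the finite-dimensional ROM vector $y^\ell(t) \in \mathbb R^\ell$ with its lifted representation $\Psi y^\ell(t) \in V^\ell$, making sure the Galerkin orthogonality $\langle \vartheta(t), \psi_i\rangle_\mathrm M = 0$ is used only where legitimate, and tracking that the projected-derivative snapshot $\dot y$ is genuinely in the snapshot ensemble so that~\eqref{ErrorFormula} applies to it. The Lipschitz/Gronwall mechanics themselves are standard; the constant $\widehat C$ will depend on $\te$, $L_f$, and the spectral gap of $\mathrm M$, but not on $\ell$.
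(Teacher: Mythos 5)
The paper offers no in-text proof of this proposition---it delegates it to \cite[Theorem~2.1 and Remark~2.2]{Vol11}---and your argument (splitting the error into the projection component $\vartheta=y-\mathcal P^\ell y$ and the Galerkin component $\xi=\mathcal P^\ell y-\Psi y^\ell$, controlling $\vartheta$ and $\dot y-\mathcal P^\ell\dot y$ via \eqref{ErrorFormula} thanks to the inclusion of $\dot y$ among the snapshots, and closing the estimate for $\xi$ with the Lipschitz continuity of $f$, the norm equivalence between $\|\cdot\|_2$ and $\|\cdot\|_\mathrm M$, and Gronwall's lemma) is precisely that standard proof, so it is correct and takes essentially the same route. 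The only wobble is the sign bookkeeping in your middle paragraph: with $\xi=\mathcal P^\ell y-\Psi y^\ell$ the identification $\Psi y^\ell=\mathcal P^\ell y-\xi$ is in fact correct (not "wrong in sign"), and since $\mathcal P^\ell$ is a constant, $\mathrm M$-self-adjoint matrix commuting with $\mathrm d/\mathrm dt$, the error equation collapses cleanly to $\dot\xi(t)=\mathcal P^\ell\bigl(f(y(t),u(t))-f(\Psi y^\ell(t),u(t))\bigr)$, after which testing with $\xi$ and your Lipschitz--Young--Gronwall chain goes through exactly as you describe.
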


\subsection{Reduced-order modelling for the optimal control problem}
\label{Section3.3}

Next we introduce the POD reduced-order model for \eqref{Phat}. For given $(u,y_\circ)\in\UAD\times\Omega$ let $y^\ell=y^\ell(u;y_\circ)\in\mathbb Y^\ell$ denote the unique solution to \eqref{Eq:ROMSystem}. Then, the reduced POD cost is given by
\begin{align*}
\widehat J^\ell(u;y_\circ)&=J(y^\ell(u;y_\circ),u)=J(y^\ell,u)\\
&=\int_0^\infty g\big(\Psi y^\ell(t),u(t)\big)e^{-\lambda t}\,\mathrm dt=\int_0^\infty g^\ell\big(y^\ell(t),u(t)\big)e^{-\lambda t}\,\mathrm dt,
\end{align*}
where we have set $g^\ell(y^\ell,u)=g(\Psi y^\ell,u)$ for $(y^\ell,u)\in\mathbb R^\ell\times\Uad$.
Then, the POD approximation for \eqref{Phat} reads as follows: for given $y_\circ\in\Omega$ we consider
\begin{equation}
\label{Phatell}
\tag{$\mathbf{\widehat P}^{\boldsymbol \ell}$} 
\min \widehat J^\ell(u;y_\circ)\quad\text{such that }\quad u\in\UAD.
\end{equation}

\section{A-priori error for the HJB-POD  approximation}
\label{Section4}
\setcounter{section}{4}
\setcounter{equation}{0}
\setcounter{theorem}{0}
\setcounter{algorithm}{0}
\renewcommand{\theequation}{\arabic{section}.\arabic{equation}}

In this section we present the a-priori error analysis for the coupling between the HJB equation and the POD method. Our first a-priori error estimate is better from a theoretical point of view, whereas for the numerical realization the second a-priori error estimate is much more appropriate. In the first estimate we assume to work in $\mathbb R^\ell$ on a number of vertices which have been obtained mapping the $y_i$ nodes of $\mathbb R^n$ into $\mathbb R^\ell$. Even if the maximum distance between the $y_i$ neighbouring nodes is bounded by $k$, this clearly produces a non uniform grid where the distance between the neighbouring nodes can not be predicted a-priori since it depends on $\Psi$. The second error estimate takes into account a 
uniform grid of size $K$ in $\mathbb R^\ell$. 

\subsection{First a-priori error estimate}
\label{Section4.1}

We introduce two different POD approximations for the HJB equation. The first one is based on \eqref{hjb_hk}, where we project all vertices $\{y_i\}_{i=1}^{n_\mathscr S}$ into $\mathbb R^\ell$ by setting
\[
y_i^\ell=\Psi^\top\mathrm My_i\quad\text{for }i=1,\ldots,n_\mathscr S.
\]
Here we assume that $y_i^\ell\neq y_j^\ell$ holds for $i,j\in\{1,\ldots,n_\mathscr S\}$ with $i\neq j$. Then, a POD discretization of \eqref{hjb_hk} is given by
\begin{equation}
\label{hjb_hkPOD}
v_{hk}^\ell(y_i^\ell)=\min_{u\in\Uad}\big\{(1-\lambda h)v_{hk}^\ell\big(y_i^\ell+h f^\ell(y_i^\ell,u)\big)+hg^\ell(y_i^\ell,u)\big\}
\end{equation}
for $1\le i\le n_\mathscr S$. We define the mapping $\widetilde v_{hk}^\ell:\overline\Omega\to\mathbb R$ by
\[
\widetilde v_{hk}^\ell(y)=v_{hk}^\ell(\Psi^\top\mathrm My)\quad\text{for all }y\in\overline\Omega\text{ with }\Psi^\top\mathrm My\in\overline\Omega.
\]
Using $\mathcal P^\ell=\Psi\Psi^\top\mathrm M\in\mathbb R^{n\times n}$ we have
\begin{align*}
&\hspace{-2.5mm}\widetilde v_{hk}^\ell(y_i)=v_{hk}^\ell(\Psi^\top\mathrm My_i)=v_{hk}^\ell(y_i^\ell)\\
&\hspace{-2.5mm}=\min_{u\in\Uad}\big\{(1-\lambda h)v_{hk}^\ell\big(y_i^\ell+h f^\ell(y_i^\ell,u)\big)+hg^\ell(y_i^\ell,u)\big\}\\
&\hspace{-2.5mm}=\min_{u\in\Uad}\big\{(1-\lambda h)v_{hk}^\ell\big(\Psi^\top\mathrm M (y_i+h f(\mathcal P^\ell y_i,u)\big)+hg(\mathcal P^\ell y_i,u)\big\}\\
&\hspace{-2.5mm}=\min_{u\in\Uad}\big\{(1-\lambda h)\widetilde v_{hk}^\ell\big(y_i+h f(\mathcal P^\ell y_i,u)\big)+hg(\mathcal P^\ell y_i,u)\big\}
\end{align*}
for $1\le i\le n_\mathscr S$. Thus, \eqref{hjb_hkPOD} can be equivalently expressed as
\begin{equation}
\label{HJB-Est-3}
\widetilde v_{hk}^\ell(y_i)=\min_{u\in\Uad}\big\{(1-\lambda h)\widetilde v_{hk}^\ell\big(y_i+h f(\mathcal P^\ell y_i,u)\big)+hg(\mathcal P^\ell y_i,u)\big\}
\end{equation}
for $1\le i\le n_\mathscr S$. The following result measures the error between a solution to \eqref{hjb_h} and a solution to \eqref{HJB-Est-3}. The proof is similar to the proof of Theorem~1.3 in \cite{Fal97} and requires an invariance condition which will be discussed later in Remark \ref{invarrem}.

\begin{proposition}
\label{ROM-Theo}
Assume that Assumption~{\rm\ref{Assumption1}}, \eqref{AssumpOmega} and $\lambda>L_f$ hold. Let $v_h$ and $\widetilde v^\ell_{hk}$ be the solutions of \eqref{hjb_h} and \eqref{HJB-Est-3}, respectively and let the invariance condition
\begin{equation}
\label{AssumptionPol}
y_i+hf(\mathcal P^\ell y_i,u)\in\overline\Omega\quad\text{for }i=1,\ldots,n_\mathscr S\text{ and for all }u\in\Uad
\end{equation}
be satisfied. Then, there exist two constants $\widehat C_0,\widehat C_1$ such that
\[
\sup_{y\in\overline\Omega}\big|v_h(y)-\widetilde v_{hk}^\ell(y)\big|\le \widehat C_0\,\frac{k}{h}+ \widehat C_1\bigg(\sum_{i=1}^{n_\mathscr S}{\|y_i-\mathcal P^\ell y_i\|}_2^2\bigg)^{1/2}\quad\text{for any }h\in[0,1/\lambda).
\]
\end{proposition}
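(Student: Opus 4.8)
The plan is to compare the solution $\widetilde v_{hk}^\ell$ of \eqref{HJB-Est-3} with the solution $v_{hk}$ of \eqref{hjb_hk}, and then invoke the triangle inequality together with estimate \eqref{HJB-vh} from Theorem~\ref{Theorem:ConvRate} to bridge to $v_h$. Since both \eqref{hjb_hk} and \eqref{HJB-Est-3} are fixed-point equations for the same contraction-type operator evaluated on the vertex set $\{y_i\}_{i=1}^{n_\mathscr S}$, the only difference lies in the argument fed to $f$ and $g$: $y_i$ versus $\mathcal P^\ell y_i$. So the first step is to set up the map $\mathcal T$ on $V^k$ given by $(\mathcal T w)(y_i)=\min_{u}\{(1-\lambda h)\,\Pi w(y_i+hf(y_i,u))+hg(y_i,u)\}$, where $\Pi$ denotes piecewise-affine interpolation on the triangulation, and the analogous map $\mathcal T^\ell$ using $\mathcal P^\ell y_i$ in place of $y_i$. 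Both are contractions on $C(\overline\Omega)$ with factor $1-\lambda h+L_f h<1$ when $\lambda>L_f$ (exactly as in \cite{Fal87,Fal97}), with fixed points $v_{hk}$ and $\widetilde v_{hk}^\ell$ respectively. The invariance hypothesis \eqref{AssumptionPol} is what guarantees $\mathcal T^\ell$ is well-defined (the shifted points stay in $\overline\Omega$), mirroring the role of \eqref{AssumpOmega} for $\mathcal T$.

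\smallskip
Next I would estimate $\|\mathcal T w-\mathcal T^\ell w\|_{\infty}$ at each vertex $y_i$ by subtracting the two minima and using the elementary bound $|\min_u a(u)-\min_u b(u)|\le\max_u|a(u)-b(u)|$. The resulting per-vertex bound splits into two contributions. The $g$-term contributes $h\,|g(y_i,u)-g(\mathcal P^\ell y_i,u)|\le hL_g\|y_i-\mathcal P^\ell y_i\|_2$ via Assumption~\ref{Assumption1}(2). The $w$-term contributes $(1-\lambda h)\,|\Pi w(y_i+hf(y_i,u))-\Pi w(y_i+hf(\mathcal P^\ell y_i,u))|$; here I would use the Lipschitz continuity of $v_{hk}$ (or rather of the fixed point, using Corollary~\ref{Co:Estim}, which gives a modulus controlled by $C_2=L_g/(\lambda-L_f)$ up to the $k/h$ term) together with $\|f(y_i,u)-f(\mathcal P^\ell y_i,u)\|_2\le L_f\|y_i-\mathcal P^\ell y_i\|_2$, picking up a factor $h L_f$ and a term $C_1 k/h$ coming from the piecewise-affine interpolation error of the fixed point. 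Summing the geometric series from the contraction then turns the one-step discrepancy $\mathcal O(h(\cdot))$ into a bound of order $\frac{1}{\lambda-L_f}$ times $(L_g+L_f C_2)\max_i\|y_i-\mathcal P^\ell y_i\|_2 + (\text{const})\,k/h$. Finally, bounding $\max_i\|y_i-\mathcal P^\ell y_i\|_2\le(\sum_{i=1}^{n_\mathscr S}\|y_i-\mathcal P^\ell y_i\|_2^2)^{1/2}$ produces exactly the claimed form, with $\widehat C_0$ absorbing $L_f/(\lambda(\lambda-L_f))$ from \eqref{HJB-vh} plus the interpolation constant, and $\widehat C_1$ collecting the remaining $\lambda$-, $L_f$-, $L_g$-dependent factors.

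\smallskip
The step I expect to be the main obstacle is controlling the interpolation error of the fixed point along the perturbed characteristic, i.e. handling the term $\Pi w(y_i+hf(\mathcal P^\ell y_i,u))$ when the shift point does not sit at a vertex. In the classical proof of \cite[Theorem~1.3]{Fal97} one uses that $\Pi$ commutes nicely with the (monotone) scheme and that the interpolation error on each simplex is $\mathcal O(k^2\cdot\text{(second differences)})$, but a crude Lipschitz bound only yields $\mathcal O(k)$ which, divided by $h$ after the geometric summation, gives the stated $k/h$ rate — so I would follow the robust Lipschitz route rather than the semiconcavity route here, since Proposition~\ref{ROM-Theo} only assumes $\lambda>L_f$ and not $\lambda>2L_g$. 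A secondary subtlety is that $\widetilde v_{hk}^\ell$ is defined as $v_{hk}^\ell\circ\Psi^\top\mathrm M$ only on the subset of $\overline\Omega$ mapping into $\overline\Omega$; the identity chain in the excerpt shows \eqref{hjb_hkPOD} and \eqref{HJB-Est-3} are equivalent, so I would work directly with \eqref{HJB-Est-3} on $\overline\Omega$ and never need the POD-space formulation, which keeps the comparison with $v_h$ clean. One must also check that $\widetilde v_{hk}^\ell$ inherits a Lipschitz modulus of its own (again via a Corollary~\ref{Co:Estim}-type argument applied to $\mathcal T^\ell$), so that the triangle-inequality splitting $|v_h-\widetilde v_{hk}^\ell|\le|v_h-v_{hk}|+|v_{hk}-\widetilde v_{hk}^\ell|$ has all pieces under control.
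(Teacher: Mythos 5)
There is a genuine gap in your route through $v_{hk}$, and it sits exactly where you flagged ``the main obstacle.'' In your fixed-point comparison you bound the one-step discrepancy $\|(\mathcal T-\mathcal T^\ell)w\|_\infty$ using the modulus of continuity of the \emph{fully discrete} fixed point from Corollary~\ref{Co:Estim}, i.e.\ $|v_{hk}(z_1)-v_{hk}(z_2)|\le C_1\,k/h+C_2\|z_1-z_2\|_2$. The additive $C_1\,k/h$ term does \emph{not} carry the factor $h$ that the genuine Lipschitz part picks up from $\|z_1-z_2\|_2\le hL_f\|y_i-\mathcal P^\ell y_i\|_2$. When you then sum the geometric series, you divide the whole one-step discrepancy by $1-\beta\sim\lambda h$, so the $C_1\,k/h$ contribution becomes $O(k/h^2)$, not the $O(k/h)$ you write down; your claim that the summation turns ``$\mathcal O(h(\cdot))$'' into the stated bound silently assumes every term of the discrepancy is $O(h)$, which the interpolation/mesh term is not. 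Trying to repair this by writing $|w(z_1)-w(z_2)|\le 2\sup|v_h-w|+C_2\|z_1-z_2\|_2$ fails too, because after dividing by $\lambda h$ the coefficient of $\sup|v_h-w|$ exceeds one and cannot be absorbed. (Two smaller issues: the sup-norm contraction factor of these schemes is $1-\lambda h$, not $1-\lambda h+L_fh$; and both \eqref{HJB-vh} and Corollary~\ref{Co:Estim} are stated in the paper under the semiconcavity hypothesis \eqref{SemiConc}, which Proposition~\ref{ROM-Theo} does not assume.)

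The paper's proof avoids the defective Lipschitz modulus of the discrete solution entirely by never passing through $v_{hk}$: it compares $v_h$ \emph{directly} with $\widetilde v_{hk}^\ell$. At each vertex it inserts the minimizer $\bar u_{hk}^{\ell,i}$ of \eqref{HJB-Est-3} into \eqref{hjb_h}, splits
\[
v_h\big(y_i+hf(y_i,\bar u)\big)-\widetilde v_{hk}^\ell\big(y_i+hf(\mathcal P^\ell y_i,\bar u)\big)
\]
by adding and subtracting $v_h(y_i+hf(\mathcal P^\ell y_i,\bar u))$, estimates the first difference with the \emph{true} Lipschitz continuity \eqref{vh-Lipschitz} of the semi-discrete $v_h$ (no $k/h$ penalty), and bounds the second by $\sup_{\overline\Omega}|v_h-\widetilde v_{hk}^\ell|$, which is absorbed on the left thanks to the factor $(1-\lambda h)$. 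The $k/h$ term then enters only once, at the top level, from representing a general $y\in\overline\Omega$ as a convex combination of vertices and using \eqref{vh-Lipschitz} to compare $v_h(y)$ with $\sum_i\mu_iv_h(y_i)$; dividing the resulting $O(k)$ by $\lambda h$ gives exactly $k/h$, and Cauchy--Schwarz on $\sum_i\mu_i\|y_i-\mathcal P^\ell y_i\|_2$ gives the $\ell^2$ form of the POD term. If you want to keep your two-operator framework, you must evaluate the perturbation against $v_h$ rather than against $v_{hk}$ or $\widetilde v_{hk}^\ell$ so that only \eqref{vh-Lipschitz} is ever invoked along the perturbed characteristic.
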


\begin{proof}
For any $y\in\Omega$ there are real coefficients $\mu_i=\mu_i(y)$, $1\le i\le n_\mathscr S$, of the convex combination representation of $y$ satisfying
\[
y=\sum_{i=1}^{n_\mathscr S}\mu_iy_i,\quad 0\le\mu_i\le 1\quad\text{and}\quad\sum_{i=1}^{n_\mathscr S}\mu_i=1.
\]
Since $\widetilde v_{hk}^\ell$ is piecewise affine, we obtain $\widetilde v_{hk}^\ell(y)=\sum_{i=1}^{n_\mathscr S}\mu_i\widetilde v_{hk}^\ell(y_i)$. Thus, we have
\begin{equation}
\label{Est-00}
\big|v_h(y)-\widetilde v_{hk}^\ell(y)\big|\le\bigg|\sum_{i=1}^{n_\mathscr S}\mu_i\big(v_h(y)-v_h(y_i)\big)\bigg|+\bigg|\sum_{i=1}^{n_\mathscr S}\mu_i\big(v_h(y_i)-\widetilde v_{hk}^\ell(y_i)\big)\bigg|.
\end{equation}
From $y\in\overline\Omega$ we infer that there exists an index $j$ with $y\in\overline{\mathscr S}_j\subset\overline\Omega$. Let $\mathscr I_j=\{i_1,\ldots,i_\mathsf k\}\subset\{1,\ldots,n_\mathscr S\}$ denote the index subset such that $y_i\in\overline{\mathscr S}_j$ holds for $i\in\mathscr I_j$. Then, $\mu_i=0$ holds for all $i\not\in\mathscr I_j$. Moreover, $\sum_{i=1}^{n_\mathscr S}\mu_i=\sum_{i\in\mathscr I_j}\mu_i=1$ and $\|y-y_i\|_2\le k$ for any $i\in\mathscr I_j$. From \eqref{vh-Lipschitz} we have
\begin{equation}
\label{Est-0}
\sum_{i=1}^{n_\mathscr S}\mu_i\big|v_h(y)-v_h(y_i)\big|=\sum_{i\in\mathscr I_j}\mu_i\big|v_h(y)-v_h(y_i)\big|\le \frac{L_g}{\lambda-L_f}\,k
\end{equation}
for $h\in[0,1/\lambda)$. Using \eqref{HJB-Est-3} and \eqref{hjb_h} we have
\begin{equation}
\label{Est-1}
\begin{aligned}
&v_h(y_i)-\widetilde v_{hk}^\ell(y_i)\\
&\le v_h(y_i)-(1-\lambda h)\widetilde v_{hk}^\ell\big(y_i+h f(\mathcal P^\ell y_i,\bar u_{hk}^{\ell,i})\big)+hg(\mathcal P^\ell y_i,\bar u_{hk}^{\ell,i})\\
&\le (1-\lambda h)\Big(v_h\big((y_i+h f(y_i,\bar u_{hk}^{\ell,i})\big)-\widetilde v_{hk}^\ell\big(y_i+h f(\mathcal P^\ell y_i,\bar u_{hk}^{\ell,i})\big)\Big)\\
&\quad+h\big(g(y_i,\bar u_{hk}^{\ell,i})-g(\mathcal P^\ell y_i,\bar u_{hk}^{\ell,i})\big),
\end{aligned}
\end{equation}
where $\bar u_{hk}^{\ell,i}\in\Uad$ is defined as
\begin{equation}
\label{uMin}
\bar u_{hk}^{\ell,i}=\mathrm{arg}\hspace{-0.6mm}\min_{\hspace{-3mm}u\in\Uad}\big\{(1-\lambda h)\widetilde v_{hk}^\ell\big(y_i+h f(\mathcal P^\ell y_i,u)\big)+hg(\mathcal P^\ell y_i,u)\big\}.
\end{equation}
Applying \eqref{vh-Lipschitz} again we deduce that
\[
\big|v_h\big((y_i+h f(y_i,\bar u_{hk}^{\ell,i})\big)-v_h\big(y_i+h f(\mathcal P^\ell y_i,\bar u_{hk}^{\ell,i})\big)\big|\le \frac{hL_gL_f}{\lambda-L_f}\,{\|y_i-\mathcal P^\ell y_i\|}_2
\]
for $1\le i\le n_\mathscr S$ and $h\in[0,1/\lambda)$. Hence, from \eqref{AssumptionPol} it follows
\begin{align*}
&v_h\big((y_i+h f(y_i,\bar u_{hk}^{\ell,i})\big)-\widetilde v_{hk}^\ell\big(y_i+h f(\mathcal P^\ell y_i,\bar u_{hk}^{\ell,i})\big)\\
&\quad\le \Big(v_h\big((y_i+h f(y_i,\bar u_{hk}^{\ell,i})\big)-v_h\big(y_i+h f(\mathcal P^\ell y_i,\bar u_{hk}^{\ell,i})\big)\Big)\\
&\qquad+ \Big(v_h\big(y_i+h f(\mathcal P^\ell y_i,\bar u_{hk}^{\ell,i})\big)-\widetilde v_{hk}^\ell\big(y_i+h f(\mathcal P^\ell y_i,\bar u_{hk}^{\ell,i})\big)\Big)\\
&\quad\le \frac{hL_gL_f}{\lambda-L_f}\,{\|y_i-\mathcal P^\ell y_i\|}_2+\sup_{y\in\overline\Omega}\big|v_h(y)-\widetilde v_{hk}^\ell(y)\big|
\end{align*}
for $1\le i\le n_\mathscr S$ and $h\in[0,1/\lambda)$. Using the inequality
\[
h\big(g(y_i,\bar u_{hk}^{\ell,i})-g(\mathcal P^\ell y_i,\bar u_{hk}^{\ell,i})\big)\le hL_g\,{\|y_i-\mathcal P^\ell y_i\|}_2
\]
we derive from \eqref{Est-1}
\[
v_h(y_i)-\widetilde v_{hk}^\ell(y_i)\le \widetilde C_1h\,{\|y_i-\mathcal P^\ell y_i\|}_2+(1-\lambda h)\sup_{y\in\overline\Omega}\big|v_h(y)-\widetilde v_{hk}^\ell(y)\big|
\]
for $1\le i\le n_\mathscr S$ and $h\in[0,1/\lambda)$ with $\widetilde C_1=L_g(L_f/(\lambda-L_f)+1)$. By interchanging the role of $v_h$ and $\widetilde v_{hk}^\ell$ in \eqref{Est-1} we derive
\begin{equation}
\label{Est-2}
\big|v_h(y_i)-\widetilde v_{hk}^\ell(y_i)\big|\le \widetilde C_1h\,{\|y_i-\mathcal P^\ell y_i\|}_2+(1-\lambda h)\sup_{y\in\overline\Omega}\big|v_h(y)-\widetilde v_{hk}^\ell(y)\big|
\end{equation}
for $1\le i\le n_\mathscr S$ and $h\in[0,1/\lambda)$. Note that $0\le\sum_{i=1}^{n_\mathscr S}\mu_i^2\le\sum_{i=1}^{n_\mathscr S}\mu_i=1$ holds for the coefficients in the convex combination representation. Inserting \eqref{Est-0} and \eqref{Est-2} into \eqref{Est-00} we find
\begin{align*}
&\big|v_h(y)-\widetilde v_{hk}^\ell(y)\big|\\
&\le(1-\lambda h)\sup_{y\in\overline\Omega}\big|v_h(y)-\widetilde v_{hk}^\ell(y)\big|+\widetilde C_0\,k+\widetilde C_1h\sum_{i=1}^{n_\mathscr S}\mu_i\,{\|y_i-\mathcal P^\ell y_i\|}_2\\
&\le(1-\lambda h)\sup_{y\in\overline\Omega}\big|v_h(y)-\widetilde v_{hk}^\ell(y)\big|+\widetilde C_0\,k+\widetilde C_1h\bigg(\sum_{i=1}^{n_\mathscr S}{\|y_i-\mathcal P^\ell y_i\|}_2^2\bigg)^{1/2}
\end{align*}
for $h\in[0,1/\lambda)$ with $\widetilde C_0=L_g/(\lambda-L_f)$, which implies
\[
\sup_{y\in\overline\Omega}\big|v_h(y)-\widetilde v_{hk}^\ell(y)\big|\le \widehat C_0\,\frac{k}{h}+\widehat C_1\bigg(\sum_{i=1}^{n_\mathscr S}{\|y_i-\mathcal P^\ell y_i\|}_2^2\bigg)^{1/2}
\]
for $h\in[0,1/\lambda)$ with $\widehat C_i=\widetilde C_i/\lambda$, $i=0,1$. 
\end{proof}

\begin{remark}\label{invarrem}
\rm
Let us give sufficient conditions for \eqref{AssumptionPol}. First, we observe that for any $i\in\{1,\ldots,n_\mathscr S\}$ and $u\in\Uad$ we have
\[
y_i+hf(\mathcal P^\ell y_i,u)=y_i+hf(y_i,u)+h\big(f(\mathcal P^\ell y_i,u)-f(y_i,u)\big).
\]
To ensure \eqref{AssumptionPol} we replace \eqref{AssumpOmega} by the stronger assumption
\[
y+hf(y,u)\in\mathrm{int}\,\Omega\quad\text{for all }y\in\overline\Omega\text{ and for all }u\in\Uad,
\]
where $\mathrm{int}\,\Omega$ stands for the (open) interior of the set $\Omega$, we have $y_i+hf(y_i,u)\in\mathrm{int}\,\Omega$ for any $i\in\{1,\ldots,n_\mathscr S\}$. Moreover, Assumption~\ref{Assumption1}-1) implies that
\[
{\|f(\mathcal P^\ell y_i,u)-f(y_i,u)\|}_2\le L_f\,{\|\mathcal P^\ell y_i-y_i\|}_2
\]
holds. Consequently, if the mesh size $h$ or if $\|\mathcal P^\ell y_i-y_i\|_2$ are sufficiently small, the norm of  the vector $h(f(\mathcal P^\ell y_i,u)-f(y_i,u))$ can be made sufficiently small so that $y_i+hf(\mathcal P^\ell y_i,u)\in\overline\Omega$.\hfill$\Diamond$
\end{remark}

From Theorem~\ref{Th:ConvRate-vh} and Proposition~\ref{ROM-Theo} we derive the following a-priori error estimate.

\begin{theorem}
\label{APr-Theo2}
Assume that Assumption~{\rm\ref{Assumption1}}, \eqref{AssumpOmega} and \eqref{AssumptionPol} hold. Suppose that $f,g$ satisfy the semiconcavity conditions \eqref{SemiConc}. Let $v$ and $\widetilde v^\ell_{hk}$ be the solutions of \eqref{hjb} and \eqref{HJB-Est-3}, respectively. If $\lambda>\max\{L_f,L_g\}$, then there exists constants $c_0,c_1,c_2\ge0$ such that
\begin{equation}
\label{AEst1}
\sup_{y\in\overline\Omega}\big|v(y)-\widetilde v_{hk}^\ell(y)\big|\le c_0h+c_1\,\frac{k}{h}+c_2\bigg(\sum_{i=1}^{n_\mathscr S}{\|y_i-\mathcal P^\ell y_i\|}_2^2\bigg)^{1/2}
\end{equation}
for any $h\in[0,1/\lambda)$.
\end{theorem}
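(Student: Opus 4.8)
The plan is to combine the two error estimates already at our disposal via a triangle inequality. We write
\[
\big|v(y)-\widetilde v_{hk}^\ell(y)\big|\le\big|v(y)-v_h(y)\big|+\big|v_h(y)-\widetilde v_{hk}^\ell(y)\big|
\]
for every $y\in\overline\Omega$, and then take the supremum over $y\in\overline\Omega$. The first term is controlled by Theorem~\ref{Th:ConvRate-vh}: since Assumption~\ref{Assumption1} and the semiconcavity conditions \eqref{SemiConc} hold, and since $\lambda>\max\{L_f,L_g\}$ (in particular $\lambda>\max\{L_g,L_f\}$ as required there), we get $\sup_{y\in\mathbb R^n}|v(y)-v_h(y)|\le Ch$ for all $h\in[0,1/\lambda)$, and a fortiori the same bound on $\overline\Omega$. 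The second term is exactly the content of Proposition~\ref{ROM-Theo}: under Assumption~\ref{Assumption1}, the invariance conditions \eqref{AssumpOmega} and \eqref{AssumptionPol}, and $\lambda>L_f$, we have
\[
\sup_{y\in\overline\Omega}\big|v_h(y)-\widetilde v_{hk}^\ell(y)\big|\le\widehat C_0\,\frac{k}{h}+\widehat C_1\bigg(\sum_{i=1}^{n_\mathscr S}{\|y_i-\mathcal P^\ell y_i\|}_2^2\bigg)^{1/2}
\]
for $h\in[0,1/\lambda)$.

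The remaining step is purely to check that the hypotheses of the theorem indeed supply what each cited result needs, and to collect constants. Theorem~\ref{Th:ConvRate-vh} wants $\lambda>\max\{L_g,L_f\}$ and \eqref{SemiConc}, both of which are assumed here. Proposition~\ref{ROM-Theo} wants $\lambda>L_f$, \eqref{AssumpOmega}, \eqref{AssumptionPol} — again all assumed (note $\lambda>\max\{L_f,L_g\}\ge L_f$). Since both right-hand sides are valid simultaneously on the common range $h\in[0,1/\lambda)$, adding them yields
\[
\sup_{y\in\overline\Omega}\big|v(y)-\widetilde v_{hk}^\ell(y)\big|\le Ch+\widehat C_0\,\frac{k}{h}+\widehat C_1\bigg(\sum_{i=1}^{n_\mathscr S}{\|y_i-\mathcal P^\ell y_i\|}_2^2\bigg)^{1/2},
\]
so the claim \eqref{AEst1} follows with $c_0=C$ (the constant of Theorem~\ref{Th:ConvRate-vh}, bounded as in Remark~\ref{Remark:TimeDer}), $c_1=\widehat C_0=\widetilde C_0/\lambda=L_g/(\lambda(\lambda-L_f))$, and $c_2=\widehat C_1=\widetilde C_1/\lambda=L_g(L_f/(\lambda-L_f)+1)/\lambda$.

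Honestly, there is no real obstacle: this is a corollary-style assembly of two previously proved estimates through a single triangle inequality, and the only thing to be careful about is bookkeeping of the constants and confirming the hypothesis $\lambda>\max\{L_f,L_g\}$ covers both the $\lambda>L_f$ needed in Proposition~\ref{ROM-Theo} and the $\lambda>\max\{L_g,L_f\}$ needed in Theorem~\ref{Th:ConvRate-vh}. One subtlety worth a remark is that the two intermediate objects $v_h$ must be literally the same function in both invocations, which they are, since $v_h$ is the unique Lipschitz solution of \eqref{hjb_h} under the standing assumptions.
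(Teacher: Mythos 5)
Your proof is correct and is exactly the argument the paper intends: the theorem is stated as a direct consequence of Theorem~\ref{Th:ConvRate-vh} and Proposition~\ref{ROM-Theo}, combined by the triangle inequality through $v_h$, with the hypothesis $\lambda>\max\{L_f,L_g\}$ covering both prerequisites. Your bookkeeping of the constants is consistent with the bounds established in those results.
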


\begin{remark}
\label{Rem-APr-Theo-2}
\rm
The a-priori error estimate presented in Theorem \ref{APr-Theo2} is natural, because it combines the discretization error between $v$ and $v_{hk}$ (compare \eqref{hjb:est2}) with the POD approximation quality for the (finite many) vertices $\{y_i\}_{i=1}^{n_\mathscr S}$. In particular, if we determine the POD basis by solving
\[
\left\{
\begin{aligned}
&\min \sum_{i=1}^{n_\mathscr S}\Big\|y_i-\sum_{j=1}^\ell {\langle y_i,\psi_j\rangle}_2\,\psi_j\Big\|_2^2\\
&\hspace{1mm}\text{such that }\{\psi_i\}_{i=1}^\ell\subset\mathbb R^n\text{ and }{\langle\psi_i,\psi_j\rangle}_2=\delta_{ij},~1\le i,j\le \ell,
\end{aligned}
\right.
\]
we get the a-priori error estimate
\[
\sup_{y\in\overline\Omega}\big|v(y)-\widetilde v_{hk}^\ell(y)\big|\le c_0h+c_1\,\frac{k}{h}+c_2\bigg(\sum_{i=\ell+1}^{n_\mathscr S}\lambda_i\bigg)^{1/2}.
\]
However, the POD grid points $\{y_i^\ell\}_{i=}^{n_\mathscr S}$ are not well distributed in general, which is disadvantageous for the numerical realization.\hfill$\Diamond$
\end{remark}

\subsection{Second a-priori error estimate}
\label{Section4.2}

From a numerical point of view \eqref{hjb_hkPOD} is not appropriate, because in general the grid points $\{y_i^\ell\}_{i=1}^{n_\mathscr S}$ are not uniformly distributed in $\mathbb R^\ell$ and their distribution will strongly depend on $\Psi$.  Therefore, we define a second POD discretization of the HJB equations where we have an explicit  bound on the distance between the neighbouring nodes. Clearly in this case we will need an interpolation operator defined on the grid (tipically, this will be a piecewise linear interpolation operator).
 With \eqref{AssumpOmega} holding we assume that there exists a bounded polyhedron $\Omega^\ell\subset\mathbb R^\ell$ satisfying
\begin{equation}
\label{AssumpOmegaell}
\Psi^\top \mathrm M y\in\mathrm{int}\,\Omega^\ell\quad\text{for all }y\in\overline\Omega.
\end{equation}

\begin{remark}
\label{Condi1}
\rm
Let $y\in\overline\Omega$, $u\in\Uad$ be arbitrarily chosen and set $y^\ell=\Psi^\top\mathrm M y\in\mathbb R^\ell$. Then,
\begin{align*}
y^\ell+hf^\ell(y^\ell,u)&=\Psi^\top \mathrm My+h\Psi^\top\mathrm Mf(\Psi\Psi^\top\mathrm My,u)\\
&=\Psi^\top \mathrm M\big(y+hf(y,u)\big)+h\Psi^\top\mathrm M\big(f(\Psi\Psi^\top\mathrm M y,u)-f(y,u)\big).
\end{align*}
We infer from \eqref{AssumpOmega} that $z=y+hf(y,u)\in\overline\Omega$ holds. Hence, by \eqref{AssumpOmegaell}, we have $\Psi^\top \mathrm Mz\in\mathrm{int}\,\Omega^\ell$. Furthermore, we derive from
\[
\big\|h\Psi^\top\mathrm M\big(f(\Psi\Psi^\top\mathrm M y,u)-f(y,u)\big)\big\|_2\le h L_f{\|\Psi^\top\mathrm M\|}_2\,{\|\Psi\Psi^\top\mathrm M y-y\|}_2
\]
and $\mathrm{span}\,\{\psi_1,\ldots,\psi_n\}=\mathbb R^n$ that $y^\ell+hf^\ell(y^\ell,u)\in\overline\Omega^\ell$ holds for step size $h$ or $\|\Psi\Psi^\top\mathrm M y-y\|_2$ sufficiently small. If $\mathrm M=\mathrm{Id}\in\mathbb R^{n\times n}$ holds, we have $\|\Psi^\top\mathrm M\|_2=1$.\hfill$\Diamond$
\end{remark}

Let $\{\mathscr S_j^\ell\}_{j=1}^{\mathsf m_\mathscr S}$ be a family of simplices which defines a regular triangulation of the polyhedron $\Omega^\ell$ such that
\[
\overline\Omega^\ell=\bigcup_{j=1}^{\mathsf m_\mathscr S}\mathscr S_j^\ell\quad\text{and}\quad K=\max_{1\le j\le \mathsf m_\mathscr S}\big(\mathrm{diam}\,\mathscr S_j^\ell\big).
\]
Let $V^K$ be the space of piecewise affine functions from $\overline\Omega^\ell$ to $\mathbb R$ which are continuous in $\overline\Omega^\ell$ having constant gradients in the interior of any simplex $\mathscr S_j^\ell$ of the triangulation. Then, we introduce the following POD scheme for the HJB equations
\begin{equation}
\label{hjb_hkPOD2}
v_{hK}^\ell(\mathsf y_i^\ell)=\min_{u\in\Uad}\big\{(1-\lambda h)v_{hK}^\ell\big(\mathsf y_i^\ell+h f^\ell(\mathsf y^\ell_i,u)\big)+hg^\ell(\mathsf y^\ell_i,u)\big\}
\end{equation}
for any vertex $\mathsf y_i^\ell\in\overline\Omega^\ell$. Throughout this paper we assume that we have $\mathsf n_\mathscr S$ vertices $\mathsf y^\ell_1,\ldots,\mathsf y^\ell_{\mathsf n_\mathscr S}$. We set $\mathsf y_i=\Psi\mathsf y_i^\ell$ for $1\le i\le\mathsf n_\mathscr S$ and define
\[
\widetilde v_{hK}^\ell(y)=v_{hK}^\ell(\Psi^\top\mathrm My)\quad\text{for all }y\in\overline\Omega
\]
Recall that \eqref{AssumpOmegaell} ensures $\Psi^\top\mathrm My\in\mathrm{int}\,\Omega^\ell$ for any $y\in\overline\Omega$. Moreover, $\mathsf y_i=\Psi\mathsf y_i^\ell$ and $\Psi^\top\mathrm M\Psi=\mathrm{Id}\in\mathbb R^{\ell\times\ell}$ implies that
\[
\widetilde v_{hK}^\ell(\mathsf y_i)=v_{hK}^\ell(\Psi^\top\mathrm M\mathsf y_i)=v_{hK}^\ell(\mathsf y_i^\ell)\quad\text{for }1\le i\le \mathsf n_\mathscr S.
\]
Using \eqref{hjb_hkPOD2} we obtain
\begin{align*}
\widetilde v_{hK}^\ell(\mathsf y_i)&=v_{hK}^\ell(\mathsf y_i^\ell)=\min_{u\in\Uad}\big\{(1-\lambda h)v_{hK}^\ell\big(\mathsf y_i^\ell+h f^\ell(\mathsf y^\ell_i,u)\big)+hg^\ell(\mathsf y^\ell_i,u)\big\}\\
&=\min_{u\in\Uad}\big\{(1-\lambda h)v_{hK}^\ell\big(\Psi^\top\mathrm M(\Psi \mathsf y_i^\ell+h f(\Psi\mathsf y^\ell_i,u))\big)+hg(\Psi\mathsf y^\ell_i,u)\big\}\\
&=\min_{u\in\Uad}\big\{(1-\lambda h)v_{hK}^\ell\big(\Psi^\top\mathrm M(\mathsf y_i+h f(\mathsf y_i,u))\big)+hg(\mathsf y_i,u)\big\}\\
&=\min_{u\in\Uad}\big\{(1-\lambda h)\widetilde v_{hK}^\ell\big(\mathsf y_i+h f(\mathsf y_i,u)\big)+hg(\mathsf y_i,u)\big\}\quad\text{for }1\le i\le \mathsf n_\mathscr S.
\end{align*}
Thus, \eqref{hjb_hkPOD2} can be written as
\begin{equation}
\label{hjb_hkPOD2a}
\widetilde v_{hK}^\ell(\mathsf y_i)=\min_{u\in\Uad}\big\{(1-\lambda h)\widetilde v_{hK}^\ell\big(\mathsf y_i+h f(\mathsf y_i,u)\big)+hg(\mathsf y_i,u)\big\}
\end{equation}
for $1\le i\le\mathsf n_\mathscr S$. 

\begin{proposition}
\label{ROM-Theo2}
Assume that Assumption~{\rm\ref{Assumption1}}, \eqref{AssumpOmega} and \eqref{AssumpOmegaell} hold. Let $v_h$ and $\widetilde v^\ell_{hK}$ be the solutions of \eqref{hjb_h} and \eqref{hjb_hkPOD2a}. Let
\begin{equation}
\label{AssumptionPol2}
\Psi^\top\mathrm M\big(y+hf(\mathcal P^\ell y,u)\big)\in\overline\Omega^\ell\quad\text{for all }y\in\overline\Omega\text{ and for all }u\in\Uad.
\end{equation}
be satisfied. For $\lambda>L_f$ there exists constants $c_0,c_1$ such that
\[
\sup_{y\in\overline\Omega}\big|v_h(y)-\widetilde v_{hk}^\ell(y)\big|\le c_0\,{\|\Psi\|}_2\,\frac{K}{h}+c_1\sup_{y\in\overline\Omega}{\|y-\mathcal P^\ell y\|}_2\,\frac{1}{h}
\]
for any $h\in[0,1/\lambda)$.
\end{proposition}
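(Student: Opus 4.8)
The plan is to mimic the fixed-point / contraction argument used in the proof of Proposition~\ref{ROM-Theo}, but now accounting for the fact that the reduced grid is uniform of mesh size $K$ (rather than the image of the fine grid under $\Psi^\top\mathrm M$), and that the reduced scheme is built on the genuinely reduced nodes $\mathsf y_i=\Psi\mathsf y_i^\ell$. The first step is to rewrite $\widetilde v_{hK}^\ell$ in the convenient form \eqref{hjb_hkPOD2a}, which has already been done in the excerpt: for every vertex $\mathsf y_i$ of the reduced triangulation,
\[
\widetilde v_{hK}^\ell(\mathsf y_i)=\min_{u\in\Uad}\big\{(1-\lambda h)\widetilde v_{hK}^\ell\big(\mathsf y_i+h f(\mathsf y_i,u)\big)+hg(\mathsf y_i,u)\big\}.
\]
The key observation is that $\mathsf y_i=\Psi\Psi^\top\mathrm M\mathsf y_i=\mathcal P^\ell\mathsf y_i$, so $f(\mathsf y_i,u)=f(\mathcal P^\ell\mathsf y_i,u)$, and thus the scheme for $\widetilde v_{hK}^\ell$ is exactly a perturbation of the scheme \eqref{hjb_h} for $v_h$ with the argument $y$ replaced by its projection $\mathcal P^\ell y$ at the nodes.

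Next I would fix an arbitrary $y\in\overline\Omega$, write $y$ as a convex combination $y=\sum_i\mu_i\mathsf y_i$ over the vertices of the simplex of the reduced triangulation containing $\Psi^\top\mathrm M y$ (using that $\widetilde v_{hK}^\ell$ is piecewise affine, so $\widetilde v_{hK}^\ell(y)=\sum_i\mu_i\widetilde v_{hK}^\ell(\mathsf y_i)$), and split
\[
\big|v_h(y)-\widetilde v_{hK}^\ell(y)\big|\le\Big|\sum_i\mu_i\big(v_h(y)-v_h(\mathsf y_i)\big)\Big|+\Big|\sum_i\mu_i\big(v_h(\mathsf y_i)-\widetilde v_{hK}^\ell(\mathsf y_i)\big)\Big|.
\]
For the first term I would use the Lipschitz bound \eqref{vh-Lipschitz}, but here the crucial point is that the nodes $\mathsf y_i$ live in $\mathbb R^n$ at distance $\|y-\mathsf y_i\|_2$ from $y$, and since $\Psi^\top\mathrm M y$ and $\Psi^\top\mathrm M\mathsf y_i=\mathsf y_i^\ell$ lie in a common simplex of diameter $\le K$, one gets $\|\Psi^\top\mathrm M(y-\mathsf y_i)\|_2\le K$; then $\|y-\mathsf y_i\|_2\le\|y-\mathcal P^\ell y\|_2+\|\mathcal P^\ell y-\mathsf y_i\|_2\le\|y-\mathcal P^\ell y\|_2+\|\Psi\|_2\,K$, because $\mathcal P^\ell y-\mathsf y_i=\Psi(\Psi^\top\mathrm M y-\mathsf y_i^\ell)$. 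This produces the two structural terms $\|\Psi\|_2\,K$ and $\sup_{y}\|y-\mathcal P^\ell y\|_2$ appearing in the statement. For the second term I would, exactly as in Proposition~\ref{ROM-Theo}, plug the minimizing control of \eqref{hjb_hkPOD2a} into \eqref{hjb_h} (and vice versa), use \eqref{vh-Lipschitz} once more together with the Lipschitz bound on $g$ from Assumption~\ref{Assumption1}, and note $f(\mathsf y_i,u)=f(\mathcal P^\ell\mathsf y_i,u)$, to get
\[
\big|v_h(\mathsf y_i)-\widetilde v_{hK}^\ell(\mathsf y_i)\big|\le(1-\lambda h)\sup_{z\in\overline\Omega}\big|v_h(z)-\widetilde v_{hK}^\ell(z)\big|+(\text{terms of order }K\text{ and }\|y-\mathcal P^\ell y\|_2),
\]
where the invariance hypotheses \eqref{AssumptionPol2} (together with \eqref{AssumpOmega}, \eqref{AssumpOmegaell}, and Remark~\ref{Condi1}) guarantee that all the evaluation points $\mathsf y_i+hf(\mathsf y_i,u)$ and their comparisons stay inside $\overline\Omega$ resp.\ $\overline\Omega^\ell$ so that the schemes and the Lipschitz estimates apply.

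Combining the two pieces, taking the supremum over $y\in\overline\Omega$ on the left, and absorbing the $(1-\lambda h)\sup|v_h-\widetilde v_{hK}^\ell|$ term to the left side (legitimate since $1-\lambda h\in(0,1]$ for $h\in[0,1/\lambda)$ and the sup is finite because both functions are bounded on the compact $\overline\Omega$), one divides by $\lambda h$ and obtains the claimed bound with $c_0,c_1$ depending only on $\lambda,L_f,L_g$. I expect the main obstacle to be the bookkeeping in the first term: one must carefully relate the reduced mesh size $K$ (a distance in $\mathbb R^\ell$ measured with $\|\cdot\|_2$) to the distance $\|y-\mathsf y_i\|_2$ in $\mathbb R^n$, which is where the factors $\|\Psi\|_2$ and the projection defect $\sup_y\|y-\mathcal P^\ell y\|_2$ enter; a secondary subtlety is checking that the convex-combination representation of $y$ over the reduced vertices is the right object (one combines $\Psi^\top\mathrm M y$ in $\Omega^\ell$ and pushes the barycentric coordinates back through $\Psi$), and that the invariance condition \eqref{AssumptionPol2} is exactly what makes $\widetilde v_{hK}^\ell(\mathsf y_i+hf(\mathsf y_i,u))$ well-defined via the piecewise-affine extension. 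Everything else is the same contraction argument as in Proposition~\ref{ROM-Theo}, now without a $\sum_{i}\|y_i-\mathcal P^\ell y_i\|_2^2$ term since the nodes $\mathsf y_i$ are chosen to lie exactly in $V^\ell$.
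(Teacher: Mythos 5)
Your plan is correct and follows essentially the same route as the paper's proof: the paper splits $|v_h(y)-\widetilde v_{hK}^\ell(y)|$ into $|v_h(y)-v_h(\mathcal P^\ell y)|+|\sum_i\mu_i^\ell(v_h(\mathcal P^\ell y)-v_h(\mathsf y_i))|+|\sum_i\mu_i^\ell(v_h(\mathsf y_i)-v_{hK}^\ell(\mathsf y_i^\ell))|$, which is exactly your two-term split with the triangle inequality $\|y-\mathsf y_i\|_2\le\|y-\mathcal P^\ell y\|_2+\|\Psi\|_2\,K$ applied one step earlier, and the nodal term is a pure contraction $(1-\lambda h)\sup|v_h-\widetilde v_{hK}^\ell|$ with no extra $K$ or projection terms, as you note at the end (your mid-proof hedge ``terms of order $K$ and $\|y-\mathcal P^\ell y\|_2$'' is harmless but unnecessary). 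The only imprecision is the identity $y=\sum_i\mu_i\mathsf y_i$, which should read $\mathcal P^\ell y=\sum_i\mu_i^\ell\mathsf y_i$ (the barycentric coordinates live in $\Omega^\ell$ and are pushed forward through $\Psi$), but this does not affect the argument since $\widetilde v_{hK}^\ell(y)$ depends on $y$ only through $\Psi^\top\mathrm My$.
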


\begin{proof}
Let $y\in\overline\Omega$ be chosen arbitrarily. We set $y^\ell=\Psi^\top\mathrm My$. By \eqref{AssumpOmegaell} we have $y^\ell\in\mathrm{in}\,\Omega^\ell$. Then, there are real coefficients $\mu_i^\ell=\mu_i^\ell(y^\ell)$, $1\le i\le\mathsf n_\mathscr S$, of the convex combination representation of $y^\ell$ satisfying
\[
y^\ell=\sum_{i=1}^{\mathsf n_\mathscr S}\mu^\ell_i\mathsf y_i^\ell,\quad 0\le\mu_i^\ell\le 1\quad\text{and}\quad\sum_{i=1}^{\mathsf n_\mathscr S}\mu_i^\ell=1.
\]
Since $v_{hK}^\ell$ is piecewise affine we have $v_{hK}(y^\ell)=\sum_{i=1}^{\mathsf n_\mathscr S}\mu_i^\ell v_{hK}^\ell(\mathsf y_i^\ell)$. Using $\mathsf y_i=\Psi\mathsf y_i^\ell$, we have
\begin{equation}
\label{est-00}
\begin{aligned}
\big|v_h(y)-\widetilde v_{hk}^\ell(y)\big|&\le\big|v_h(y)-v_h(\mathcal P^\ell y)\big|+\bigg|\sum_{i=1}^{\mathsf n_\mathscr S}\mu_i^\ell\big(v_h(\mathcal P^\ell y)-v_h(\mathsf y_i)\big)\bigg|\\
&\quad+\bigg|\sum_{i=1}^{\mathsf n_\mathscr S}\mu_i^\ell\big(v_h(\mathsf y_i)-v_{hk}^\ell(\mathsf y_i^\ell)\big)\bigg|.
\end{aligned}
\end{equation}
By \eqref{vh-Lipschitz} the first term on the right-hand side of \eqref{est-00} can be bounded as follows:
\begin{equation}
\label{est-A}
\big|v_h(y)-v_h(\mathcal P^\ell y)\big|\le \frac{L_g}{\lambda-L_f}\,{\|y-\mathcal P^\ell y\|}_2\text{ for all }h\in[0,1/\lambda).
\end{equation}
Furthermore, there exists an index $j$ with $y\in\overline{\mathscr S}_j\subset\overline\Omega^\ell$. Let $\mathscr I_j=\{i_1,\ldots,i_\mathsf k\}\subset\{1,\ldots,\mathsf n_\mathscr S\}$ denote the index subset such that $y^\ell\in\overline{\mathscr S}_j$ holds for $i\in\mathscr I_j$. Then, $\mu_i^\ell=0$ holds for all $i\not\in\mathscr I_j$. Moreover, $\sum_{i=1}^{\mathsf n_\mathscr S}\mu_i^\ell=\sum_{i\in\mathscr I_j}\mu_i^\ell=1$ and $\|y^\ell-\mathsf y_i^\ell\|_2\le K$ for any $i\in\mathscr I_j$. Recall that $\mathcal P^\ell y=\Psi\Psi^\top\mathrm M y=\Psi y^\ell$ holds. Again using \eqref{vh-Lipschitz} we find
\begin{equation}
\label{est-B}
\sum_{i=1}^{n_\mathscr S}\mu_i^\ell\big|v_h(\mathcal P^\ell y)-v_h(\mathsf y_i)\big|=\sum_{i\in\mathscr I_j}\mu_i^\ell\big|v_h(\Psi y^\ell)-v_h(\Psi\mathsf y_i^\ell)\big|\le \frac{L_g\,{\|\Psi\|}_2}{\lambda-L_f}\,K
\end{equation}
for $h\in[0,1/\lambda)$. Using \eqref{hjb_hkPOD2a} and \eqref{hjb_h} we have
\begin{align*}
&v_h(\mathsf y_i)-v_{hk}^\ell(\mathsf y_i^\ell)=v_h(\mathsf y_i)-\widetilde v_{hk}^\ell(\mathsf y_i)\\
&\le v_h(\mathsf y_i)-(1-\lambda h)\widetilde v_{hK}^\ell\big(\mathsf y_i+h f(\mathsf y_i,\bar u_{hK}^{\ell,i})\big)+hg(\mathsf y_i,\bar u_{hK}^{\ell,i})\big\}\\
&\le (1-\lambda h)\Big(v_h\big((\mathsf y_i+h f(\mathsf y_i,\bar u_{hK}^{\ell,i})\big)-\widetilde v_{hK}^\ell\big(\mathsf y_i+h f(\mathsf y_i,\bar u_{hK}^{\ell,i})\big)\Big)\\
&\le(1-\lambda h)\sup_{\widetilde y\in\overline\Omega} \big|v_h(\widetilde y)-\widetilde v_{hK}^\ell(\widetilde y)\big|,
\end{align*}
where $\bar u_{hK}^{\ell,i}\in\Uad$ is defined as
\[
\bar u_{hK}^{\ell,i}=\mathrm{arg}\hspace{-0.6mm}\min_{\hspace{-3mm}u\in\Uad}\big\{(1-\lambda h)\widetilde v_{hK}^\ell\big(\mathsf y_i+h f(\mathsf y_i,u)\big)+hg(\mathsf y_i,u)\big\}.
\]
By interchanging the role of $v_h$ and $v_{hK}^\ell$ we find
\begin{equation}
\label{est-C}
\big|v_h(\mathsf y_i)-v_{hk}^\ell(\mathsf y_i^\ell)\big|\le(1-\lambda h)\sup_{\widetilde y\in\overline\Omega} \big|v_h(\widetilde y)-\widetilde v_{hK}^\ell(\widetilde y)\big|.
\end{equation}
Inserting \eqref{est-A}, \eqref{est-B} and \eqref{est-C} into \eqref{est-00} we find
\begin{equation}
\label{est-D}
\sup_{y\in\overline\Omega}\big|v_h(y)-\widetilde v_{hk}^\ell(y)\big|\le\widetilde c_1\bigg(\frac{1}{h}\,\sup_{y\in\overline\Omega}{\|y-\mathcal P^\ell y\|}_2+{\|\Psi\|}_2\,\frac{K}{h}\bigg)
\end{equation}
with $\widetilde c_1=L_g/(\lambda(\lambda-L_f))$. 
\end{proof}

\begin{remark}
\label{remark4.6}
\rm
\begin{enumerate}
\item [1)] In Remark~\ref{Condi1} we have showed that \eqref{AssumptionPol2} can be ensured provided \eqref{AssumpOmegaell} holds and the step size $h$ or $\|y-\mathcal P^\ell y\|_2$ are sufficiently small.
\item [2)] We should balance
\[
K \sim \sup_{y\in\overline\Omega}{\|y-\mathcal P^\ell y\|}_2
\]
in order to get a rate $K/h$ for the convergence.\hfill$\Diamond$
\end{enumerate}
\end{remark}

Combining Theorem~\ref{Th:ConvRate-vh} and Proposition~\ref{ROM-Theo2} we obtain the following result.

\begin{theorem}
\label{APr-Theo}
Assume that Assumption~{\rm\ref{Assumption1}}, \eqref{AssumpOmega}, \eqref{AssumpOmegaell} and \eqref{AssumptionPol2} hold. Suppose that $f,g$ satisfy the semiconcavity conditions \eqref{SemiConc}. Let $v$ and $\widetilde v^\ell_{hK}$ be the solutions of \eqref{hjb} and \eqref{hjb_hkPOD2a}, respectively. If $\lambda>\max\{L_f,L_g\}$, then there exists constants $c_0,c_1,c_2\ge0$ such that
\begin{equation}
\label{AEst2}
\sup_{y\in\overline\Omega}\big|v(y)-\widetilde v_{hK}^\ell(y)\big|\le c_0h+c_1\,{\|\Psi\|}_2\,\frac{K}{h}+c_2\sup_{y\in\overline\Omega}{\|y-\mathcal P^\ell y\|}_2\,\frac{1}{h}
\end{equation}
for any $h\in[0,1/\lambda)$.
\end{theorem}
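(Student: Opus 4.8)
The plan is to combine the two convergence results that have already been established, using the triangle inequality in exactly the way the preceding corollaries and theorems were assembled. Concretely, for any $y\in\overline\Omega$ I would write
\[
\big|v(y)-\widetilde v_{hK}^\ell(y)\big|\le\big|v(y)-v_h(y)\big|+\big|v_h(y)-\widetilde v_{hK}^\ell(y)\big|,
\]
and then bound the two summands separately. This decomposition is legitimate because $v_h$, the solution of the purely time-discrete scheme \eqref{hjb_h}, is a genuine function on all of $\overline\Omega$ (indeed on $\mathbb R^n$), so it can serve as the intermediate object even though the two outer quantities live on different grids.

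For the first summand I would invoke Theorem~\ref{Th:ConvRate-vh}: under Assumption~\ref{Assumption1}, the semiconcavity conditions \eqref{SemiConc}, and $\lambda>\max\{L_g,L_f\}$ (which is implied by the hypothesis $\lambda>\max\{L_f,L_g\}$ here), we have $\sup_{y\in\mathbb R^n}|v(y)-v_h(y)|\le C h$ for some constant $C\ge 0$ depending only on the data (and bounded explicitly as in Remark~\ref{Remark:TimeDer}). Setting $c_0:=C$ handles the $h$-term. For the second summand I would apply Proposition~\ref{ROM-Theo2}: since Assumption~\ref{Assumption1}, \eqref{AssumpOmega}, \eqref{AssumpOmegaell} and the invariance condition \eqref{AssumptionPol2} are all in force, and since $\lambda>\max\{L_f,L_g\}\ge L_f$, that proposition gives constants $c_1,c_2$ (the $c_0,c_1$ in the statement of Proposition~\ref{ROM-Theo2}) with
\[
\sup_{y\in\overline\Omega}\big|v_h(y)-\widetilde v_{hK}^\ell(y)\big|\le c_1\,{\|\Psi\|}_2\,\frac{K}{h}+c_2\,\sup_{y\in\overline\Omega}{\|y-\mathcal P^\ell y\|}_2\,\frac{1}{h}.
\]
Adding the two bounds and taking the supremum over $y\in\overline\Omega$ yields \eqref{AEst2} with $c_0,c_1,c_2\ge 0$ as claimed, valid for every $h\in[0,1/\lambda)$.

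Honestly, there is no real obstacle here: the theorem is a bookkeeping combination of Theorem~\ref{Th:ConvRate-vh} and Proposition~\ref{ROM-Theo2}, and the only things to check are that the hypotheses of the present theorem imply the hypotheses of both ingredients (they do — $\lambda>\max\{L_f,L_g\}$ covers both $\lambda>\max\{L_g,L_f\}$ and $\lambda>L_f$, and the geometric assumptions \eqref{AssumpOmega}, \eqref{AssumpOmegaell}, \eqref{AssumptionPol2} are assumed verbatim) and that the supremum in Theorem~\ref{Th:ConvRate-vh} over $\mathbb R^n$ restricts harmlessly to $\overline\Omega$. The mild subtlety worth a sentence in the write-up is that the three error contributions are of genuinely different nature — a time-discretization error $O(h)$, a spatial-mesh error $O(\|\Psi\|_2\,K/h)$, and a POD-projection error $O(h^{-1}\sup_y\|y-\mathcal P^\ell y\|_2)$ — so that a balanced choice of $h$, $K$ and $\ell$ is needed to make the total bound small; this is precisely the point already flagged in Remark~\ref{remark4.6} and it motivates the sampling strategy discussed in the next section. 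Thus the proof is one line of triangle inequality followed by citing the two earlier results.
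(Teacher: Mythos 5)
Your proof is correct and is exactly the argument the paper intends: the paper states only that the theorem follows by ``combining Theorem~\ref{Th:ConvRate-vh} and Proposition~\ref{ROM-Theo2},'' and your triangle-inequality decomposition through $v_h$ together with the hypothesis-checking is precisely that combination. Nothing further is needed.
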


\begin{remark}
\rm
Let us comment on the differences between the a-priori error estimates \eqref{AEst1} and \eqref{AEst2}. First of all, both estimates involve the terms depending on $h$ and on $k/h$ or $K/h$. Note that $\|\Psi\|_2=1$ if we choose $\mathrm M=\mathrm{Id}$ in the computation of the POD basis. However, the POD approximation errors have different impacts. In \eqref{AEst1} there is no factor $1/h$. Moreover, in \eqref{AEst2} the term $\|y-\mathcal P^\ell y\|_2$ has to be small for all $y\in\overline\Omega$, whereas in \eqref{AEst1} this is need only for the vertices $y_1,\ldots,y_{n_\mathscr S}\in\overline\Omega$. In order to get convergence from \eqref{AEst2} one has to guarantee that $K=o(h)$ and $\sup_{y\in\overline\Omega}{\|y-\mathcal P^\ell y\|}_2=o(h)$ but, as we will see in our numerical examples, the method seems to be rather efficient also for larger $K$. \hfill$\Diamond$
\end{remark}

\section{Practical implementation of the algorithm}
\label{Section5}
\setcounter{section}{5}
\setcounter{equation}{0}
\setcounter{theorem}{0}
\setcounter{algorithm}{0}
\renewcommand{\theequation}{\arabic{section}.\arabic{equation}}

In this section we present an algorithm for the HJB equation based on the POD a-priori analysis presented in Section\ref{Section4.2}. Estimate in Theorem~\ref{APr-Theo} suggests the following steps.

\subsubsection*{(1)~Time discretization}

First the infinite time horizon $[0,\infty)$ has to be replaced by a finite one. Thus, we choose $\te\gg 0$ sufficiently large and define a (possibly non equidistant) grid in $[0,\te]$ by $0=t_1<t_2<\ldots<t_{n_T}=\te$.

\subsubsection*{(2)~Snapshots computation}

Let us suppose that the set of admissible controls $\mathbb U_{ad}$ is given by choosing a discrete set $\Uad=\{u_1,\ldots,u_p\}\subset U=\mathbb R$. To solve \eqref{Eq:ContrDynamic} we apply the implicit Euler method on the time grid $\{t_j\}_{j=1}^{n_T}$. By $y_j^\nu\approx y^\nu(t_j)\in\mathbb R^n$, $1\le j\le n_T$ and $1\le\nu\le p$, we denote the computed implicit Euler approximation of the solution to \eqref{Eq:ContrDynamic} at time instance $t_j$ for the controls $u^\nu(t)=u_\nu$ for all $t\in[0,\te]$ and $1\le\nu\le p$.

\subsubsection*{(3)~Rank $\boldsymbol\ell$ of the POD basis}

In \eqref{Pelldisc} we choose $\mathrm M$ as the identity matrix and $\alpha_j^{n_T}=1$ for $j=1,\ldots,n_T$. The rank $\ell$ of the POD basis $\{\psi_i^{n_T}\}_{i=1}^\ell$ is chosen such that we can show the decay of the error when we increase $\ell$. In this paper we choose $\ell\in\{2,3,4\}$.

\subsubsection*{(4)~Reduction of the polyhedron $\boldsymbol\Omega$}

We first define all the POD grid points $y_i^\ell=\Psi^\top  y_i\in\mathbb R^\ell$ and choose the hypercube $\Omega^\ell=[a_1,b_1]\times\dots\times[a_\ell,b_\ell]\subset\mathbb R^\ell$, where we set
\[
a_j=\min\big\{(y_1^\ell)_j,\ldots,(y_{\mathsf m_y}^\ell)_j\big\},\quad b_j=\max\big\{(y_1^\ell)_j,\ldots,(y_{\mathsf m_y}^\ell)_j\big\}
\]
for $1\le j\le\ell$ and $(y_i^\ell)_j$ stands for the $j$-th component of the vector $y_i^\ell\in\mathbb R^\ell$. It follows that $y_i^\ell\in\Omega^\ell$ for $1\le i\le\mathsf m_y$. Then, when the domain $\Omega^\ell$ is obtained, we build an equidistant grid with step size computed as explained in Remark \ref{remark4.6}. 
We note that $\Omega^\ell$ should be large enough in order to contain all possible trajectories, this is also the reason we compute several snapshots in order to have a sufficiently accurate overview of the problem.

\subsubsection*{(5)~Computation of the value function $\boldsymbol{v_{hK}^\ell}$}

The piecewise linear value function $v_{hK}^\ell$ is determined on the vertices $\mathsf y_i$ for $1\le i\le \mathsf n_\mathscr S$ of the domain $\Omega^\ell$. Since the reduced-order approach yields a small $\ell<10$, we are able to perform a standard fixed point iteration method, e.g. the value iteration method. We refer the reader also to the faster algorithm introduced in \cite{AFK15} and the references therein.

\subsubsection*{(6)~Feedback law and closed-loop control}

We compute the value function $\widetilde v_{hK}^\ell(y)=v_{hK}(\Psi^\top y)$ satisfying \eqref{hjb_hkPOD} at each grid point $y=y_i$ for $1\le i\le \mathsf m_y$. At any grid point $y_i$ we store the associated optimal control $\bar u_{hK}^{\ell,i}\in\Uad$ solving
\begin{equation*}
u_{hK}^{\ell,i}:=\mathrm{arg}\hspace{-0.6mm}\min_{\hspace{-3mm}u\in\Uad}\big\{(1-\lambda h)\widetilde v_{hK}^\ell(y_i+h f(\mathcal P^\ell y_i,u))+hg(\mathcal P^\ell y_i,u)\big\}.
\end{equation*}
Then, the (suboptimal) feedback operator $\Phi^\ell:\overline\Omega\to U_{ad}$ is defined as
\[
\Phi^\ell(y)=\sum_{i=1}^{\mathsf m_y}\mu_i \bar u_{hK}^{\ell,i}\quad\text{for }y\in\overline\Omega,
\]
where the coefficients $\{\mu_i\}_{i=1}^{\mathsf m_y}$ are given by the convex combination
\begin{equation*}
y=\sum_{i=1}^{\mathsf m_y}\mu_iy_i,\quad0\le\mu_i\le1,\quad\sum_{i=1}^{\mathsf m_y}\mu_i=1.
\end{equation*}
Now the closed-loop system for \eqref{Eq:ContrDynamic} is
\begin{equation}
\label{Eq:ClosedLoop}
\dot y(t)=f\big(y(t),\Phi^\ell(y(t))\big)\in\mathbb R^n \text{ for } t>0\quad y(0)=y_\circ\in\mathbb R^n.
\end{equation}
Equation \eqref{Eq:ClosedLoop} is solved by a semi implicit Euler scheme, where the second argument $\Phi^\ell(y(t))$ is evaluated at the previous time step. We note that every time step $t_i$ we plug the suboptimal control $u_{h,K}^{\ell,i}$ into \eqref{Eq:ClosedLoop} and then project into the POD space in order to have the next initial condition. The algorithm is summarized below.

\begin{algorithm}[H]
\caption{HJB-POD feedback control}
\label{Alg:hjbpod}
\begin{algorithmic}[1]
\REQUIRE distance $K$, step size $h$, final time $\te$, time grid $\{t_j\}_{j=1}^{n_T}$, discrete control set $\Uad=\{u_1,\ldots,u_p\}\subset\mathbb R$;
\STATE Set $Y=[]$ and $\Delta=K$;
\FOR{$\nu=1,\ldots, p$}
\STATE Compute approximation $\{y_j^\nu\}_{j=1}^{n_T}$ for the solution to \eqref{Eq:ContrDynamic} with $u\equiv u_\nu$;
\STATE Set $Y=[Y\,|\,y_1^\nu,\ldots,y_{n_T}^\nu]\in\mathbb R^{n\times(\nu n_T)}$;
\ENDFOR
\STATE Set $\mathsf m_y=p n_T$ and $Y=[y_1,\ldots,y_{\mathsf m_y}]\in\mathbb R^{n\times\mathsf m_y}$;
\STATE Determine a POD basis of rank $\ell$ by solving \eqref{Pelldisc};
\STATE Compute $\Omega^\ell$ and the reduced value function $\widetilde v_{hK}^\ell$;
\STATE Compute the suboptimal control and the optimal trajectory;
\end{algorithmic}
\end{algorithm}

\section{Numerical Tests}
\label{Section6}
\setcounter{section}{6}
\setcounter{equation}{0}
\setcounter{theorem}{0}
\setcounter{algorithm}{0}
\renewcommand{\theequation}{\arabic{section}.\arabic{equation}}

In this section we present our numerical tests. First let us describe the optimal control problem in detail. The governing equation is given by 
\begin{equation}
\label{state:eq}
\begin{aligned}
w_t-\varepsilon w_{xx}+\gamma w_x +\mu (w-w^3)&=b u&&\text{in } \omega\times(0,\te),\\
w(\cdot,0)&=w_\circ&&\text{in } \omega,\\
w(\cdot,t)&=0&&\text{in } \partial\omega\times(0,\te),
\end{aligned}
\end{equation}
where $\omega=(a,b)\subset\mathbb R$ is an open interval, $w:\omega\times[0, \te]\rightarrow \R$ denotes the state, and the parameters $\varepsilon$, $\gamma$ and $\mu$ are real positive constants. The controls are elements of the closed, convex, bounded set $\UAD=\{u\in L^2(0,\te;\mathbb R)\,|\,u(t)\in\Uad\text{ for }t\ge0\}$ with $\Uad=\{u\in\R\,|\,u_a\leq u\leq u_b\}$ with given $u_a, u_b\in\R$. Later, we will consider $\Uad$ as a discrete set in the approximation of the HJB equation. The initial value and the shape function are denoted, respectively, by $w_\circ$ and $b$. Note that we deal with zero Dirichlet boundary conditions. Equation \eqref{state:eq} includes, e.g., the linear heat equation ($\mu=0$, $\gamma=0$), linear advection diffusion equation ($\mu=0$) and a semi-linear parabolic problem with a reaction term ($\mu\neq0$). As explained in the previous section we need to choose $\te$ big enough to have an accurate approximation of the infinite horizon problem.

The cost functional we want to minimize is given by
\begin{equation}
\label{CostEx6.1}
\widehat J(u;w_\circ)=\int_0^\te \left(\|w(\cdot,t;u)-\bar w\|^2_{L^2(\omega)}+\alpha\,|u(t)|^2\right)e^{-\lambda t}\,\mathrm dt,
\end{equation}
where $w(\cdot,t;u)$ is the solution to \eqref{state:eq} at time $t$, $\bar w$ is the desired state, $\alpha\in\R^+$ holds and $\lambda>0$ is the discount factor. The optimal control problem can be formulated as
\begin{equation}\label{ocp}
\min\widehat J(u;w_\circ)\mbox{ such that } u\in\UAD.
\end{equation}
Existence and uniqueness results for \eqref{ocp} can be found in \cite{Lio71} for finite time horizons. We spatially discretize the state equation \eqref{state:eq} by the standard finite difference method. This approximation leads to the following semi-discrete system of ordinary differential equations:
\begin{equation}
\label{state:eqdis}
\begin{aligned}
\mathrm My _t-\varepsilon \mathrm Ay+\gamma \mathrm Hy+\mu\mathrm F(y)=\mathrm Bu&\quad&\mbox{ in } (0,\te],\\
y(0)=y_\circ,
\end{aligned}
\end{equation}
where $y:[0,\te]\to\mathbb R^n$ is an approximation for the solution $w(\cdot\,,t)$ to \eqref{state:eq} at $n$ spatial grid points, $\mathrm A$, $\mathrm H \in\R^{n\times n}$, $\mathrm B$, $y_\circ \in\R^n$ and $\mathrm F:\mathbb R^n\to\mathbb R^n$ is given by
\[
\mathrm F(y)=\left(
\begin{array}{c}
y_1-y_1^3\\
\vdots\\
y_n-y_n^3
\end{array}
\right)\quad\text{for }y=(y_1,\ldots,y_n)\in\mathbb R^n.
\]
In general,  the dimension $n$ of the dynamical system \eqref{state:eqdis} is rather large (i.e., $n>10$) so that we can not solve the HJB equations numerically. Therefore, we apply the POD method in order to reduce the dimension of optimal control problem and solve it by HJB equations. This problem fits into our problem \eqref{Phat} and therefore we can apply Algorithm~\ref{Alg:hjbpod} to solve our optimal control problem.

Next subsections will present our numerical tests, in particular we draw our attention on the estimate presented in Theorem \ref{APr-Theo}. In order to check the quality of the computed suboptimal control $u^\ell$ we will plug it into the full model $y(u^\ell)$ and into the surrogate model $y^\ell(u^\ell).$ Moreover we evaluate the cost functional and compute the error with respect the true solution where is known.

\subsection{Test 1: Advection-diffusion equation}

Our first test concerns the linear advection-diffusion equation, we set in \eqref{state:eq}: $\te=3$, $\varepsilon=10^{-1}$, $\gamma=1$, $\mu=0$, $\omega=(0,2)$ and $w_\circ(x)=0.5\sin(\pi x)$. The shape function $b$ is the characteristic function over the subset $(0.5,1)\subset\omega$. In \eqref{CostEx6.1} we choose $\lambda=1$, and $\bar w=0$. To compute the POD basis we determine solutions to the state equation for controls in the set $U_{\mbox{snap}}=\{-2.2,1.1,0\}$. In \eqref{hjb_hkPOD2} we consider $K\in\{0.1,0.05\}$, $h=0.1K$ and the optimal trajectory is obtained with a time stepsize of $0.05$ for the implicit Euler method. The set $\Uad$ of admissible controls is, then, given by 23 controls equally distributed from -2.2 to 0. 

In the left plot of Figure~\ref{fig1:hjb} we show the solution of the uncontrolled equation \eqref{state:eq}, i.e. for $u\equiv0$.
\begin{figure}[htbp]
\centering
\includegraphics[scale=0.21]{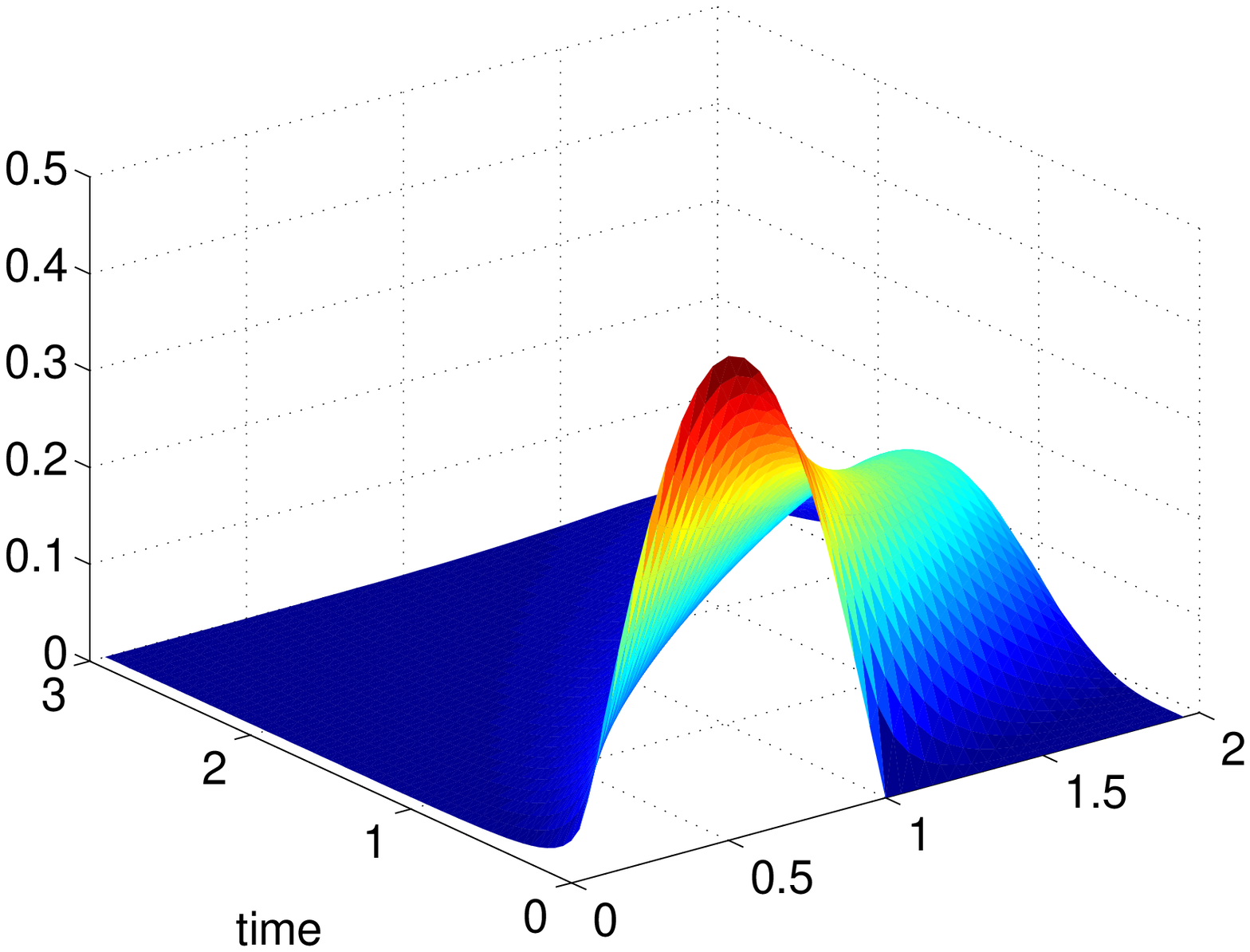}
\includegraphics[scale=0.21]{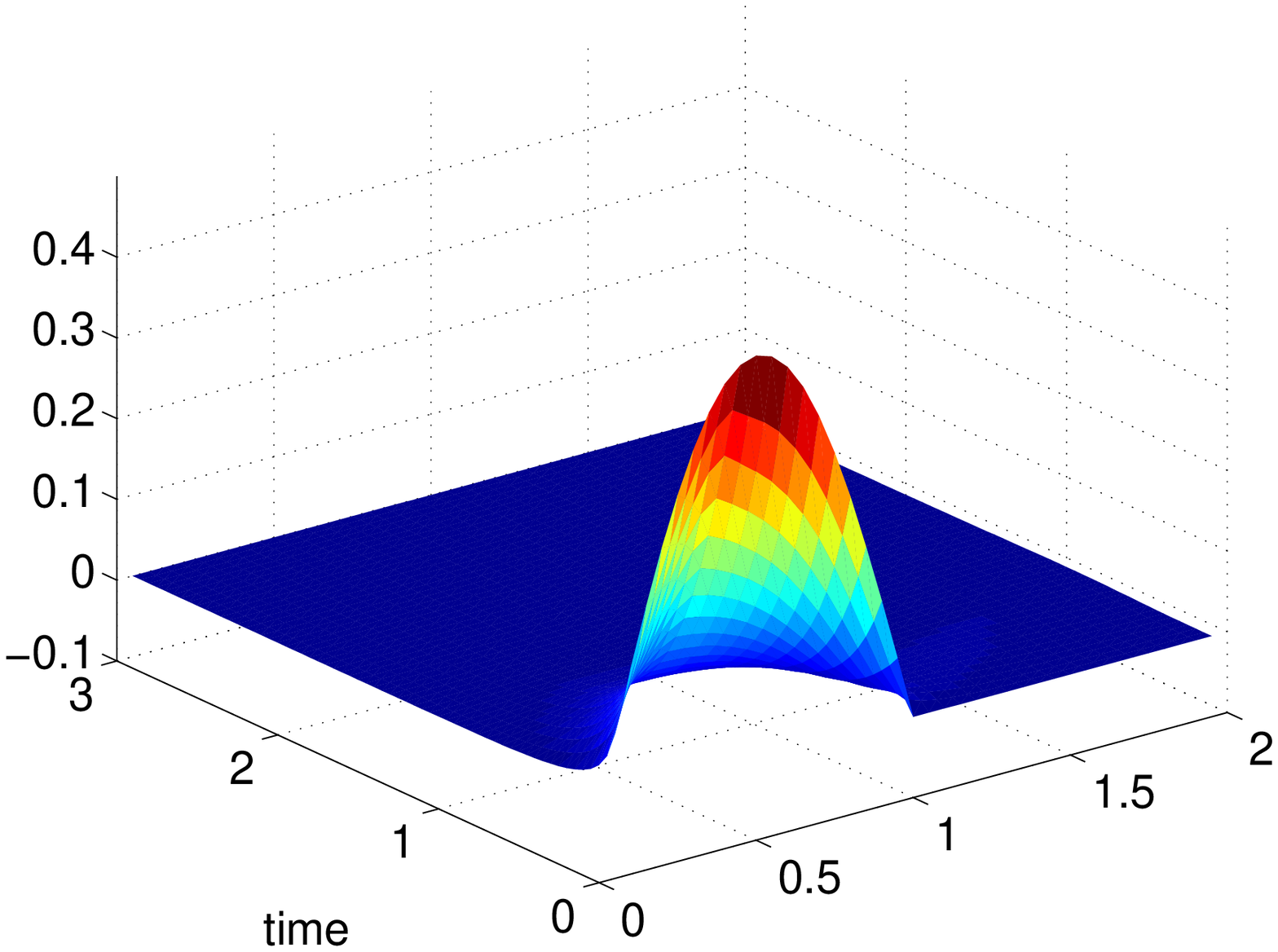}
\vspace{-0.2cm}
\includegraphics[scale=0.21]{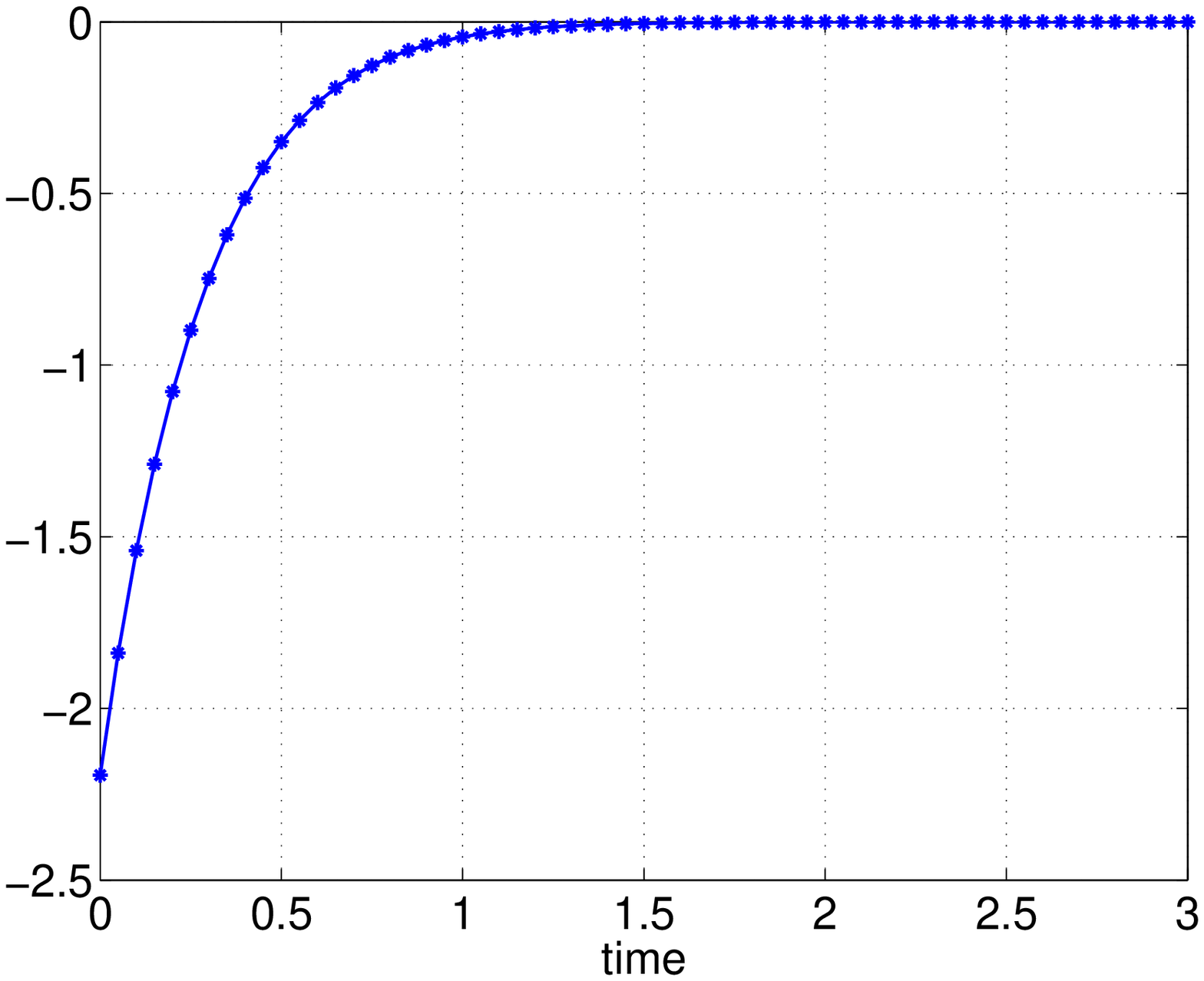}
\caption{Test 1: Uncontrolled solution (left), LQR optimal solution (middle), LQR optimal control (right). }
\label{fig1:hjb}
\end{figure}
Since our problem is linear-quadratic, the solution of the HJB equation can be computed by solving the well-known Riccati's equation (for LQR approach see \cite{CZ95}). Then, the optimal LQR state is presented in the middle of Figure~\ref{fig1:hjb}, whereas the optimal LQR control is plotted on the right of Figure~\ref{fig1:hjb}. Then, we show the controlled solution computed by means of Algorithm \ref{Alg:hjbpod} on the left of Figure~\ref{fig1:hjbsol}.
\begin{figure}[htbp]
\centering
\includegraphics[scale=0.215]{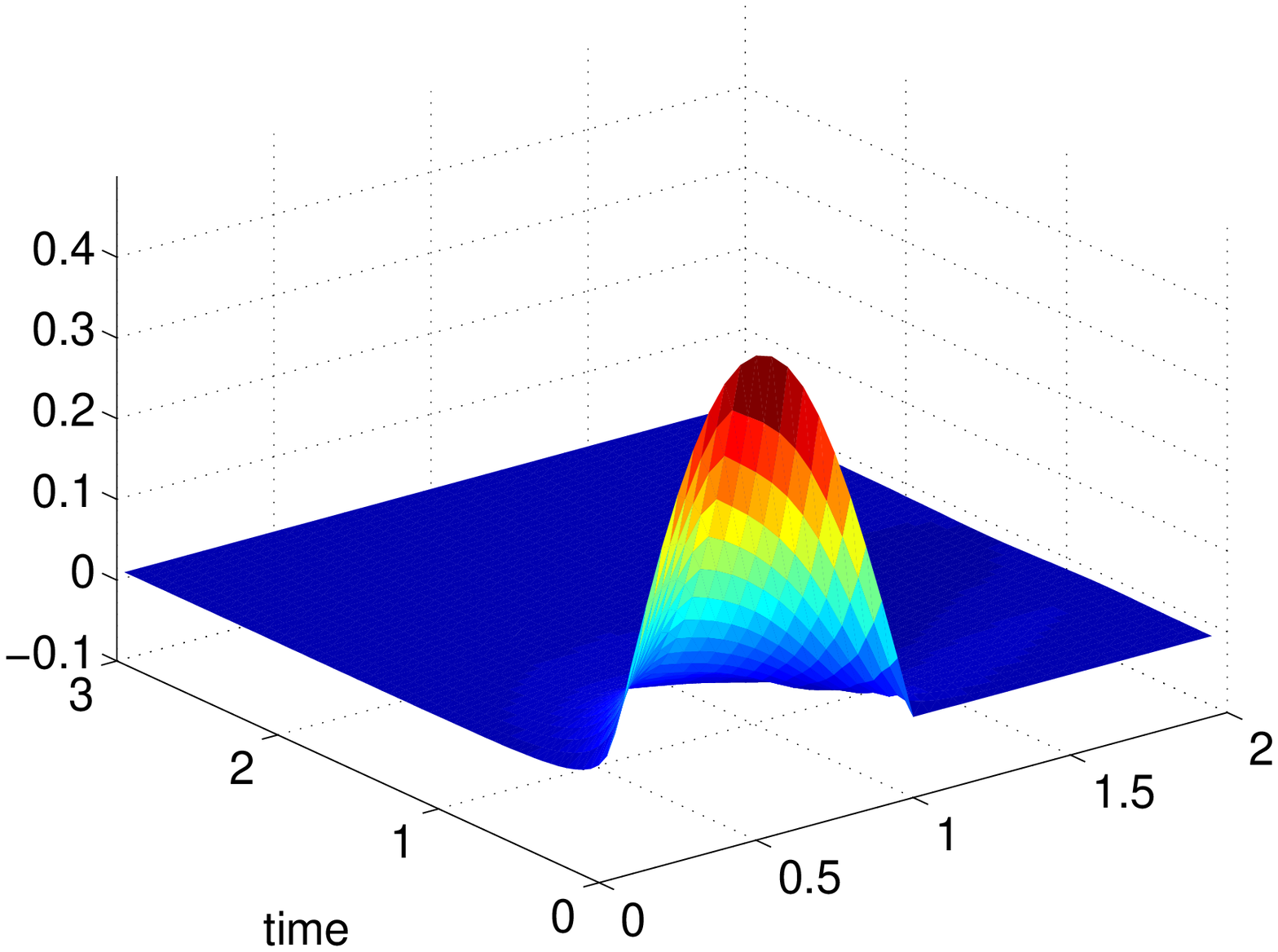}
\includegraphics[scale=0.215]{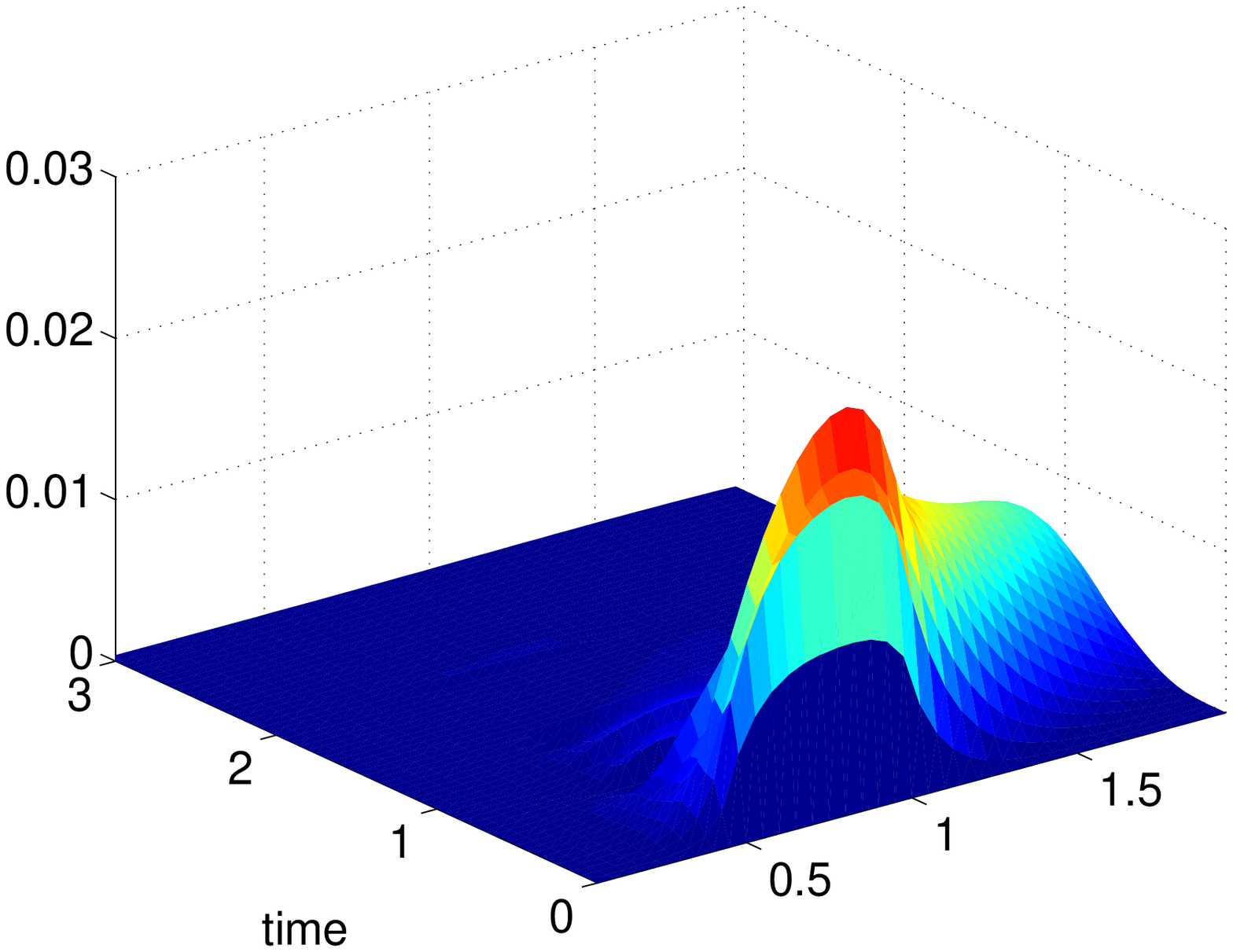}
\includegraphics[scale=0.215]{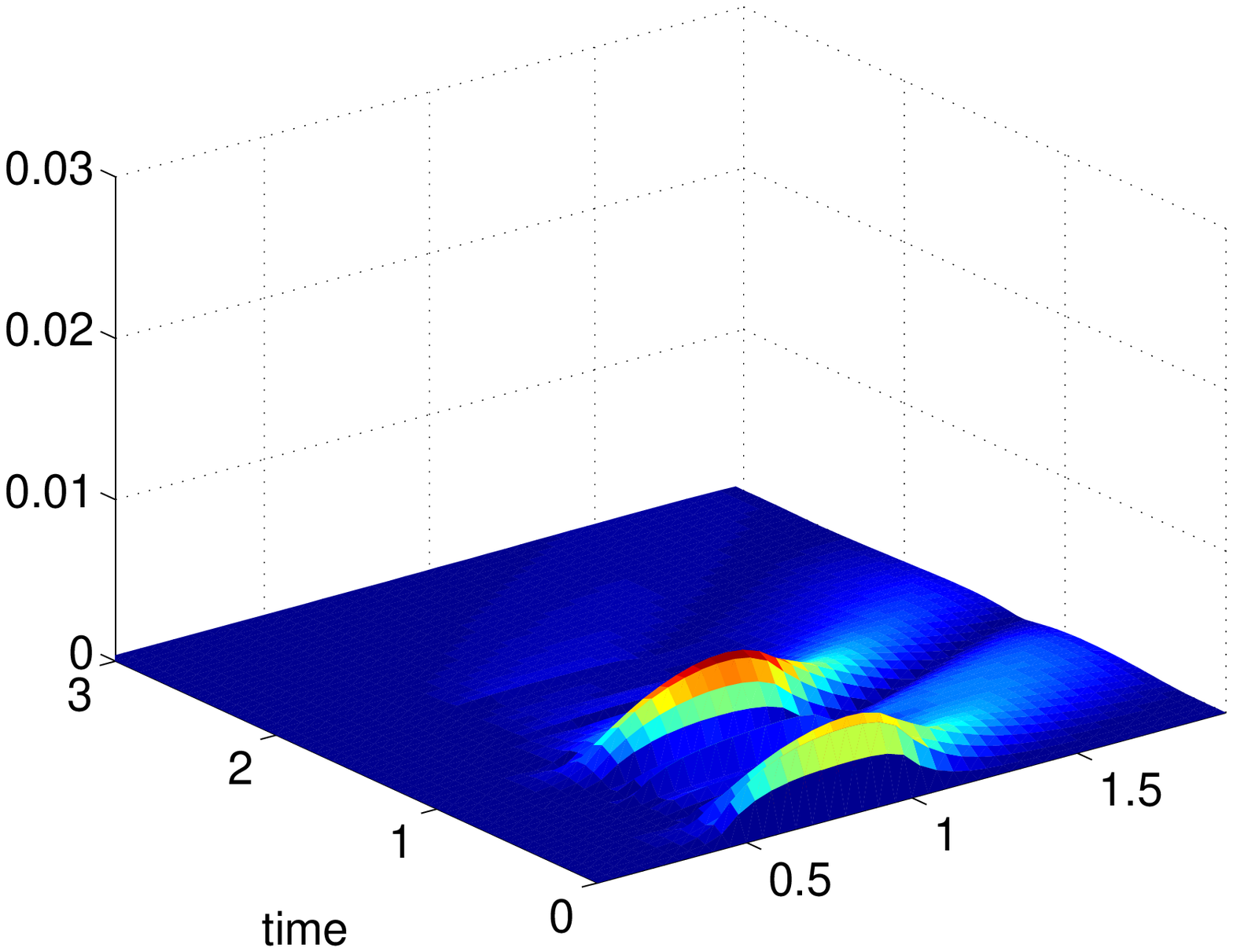}\\
\includegraphics[scale=0.215]{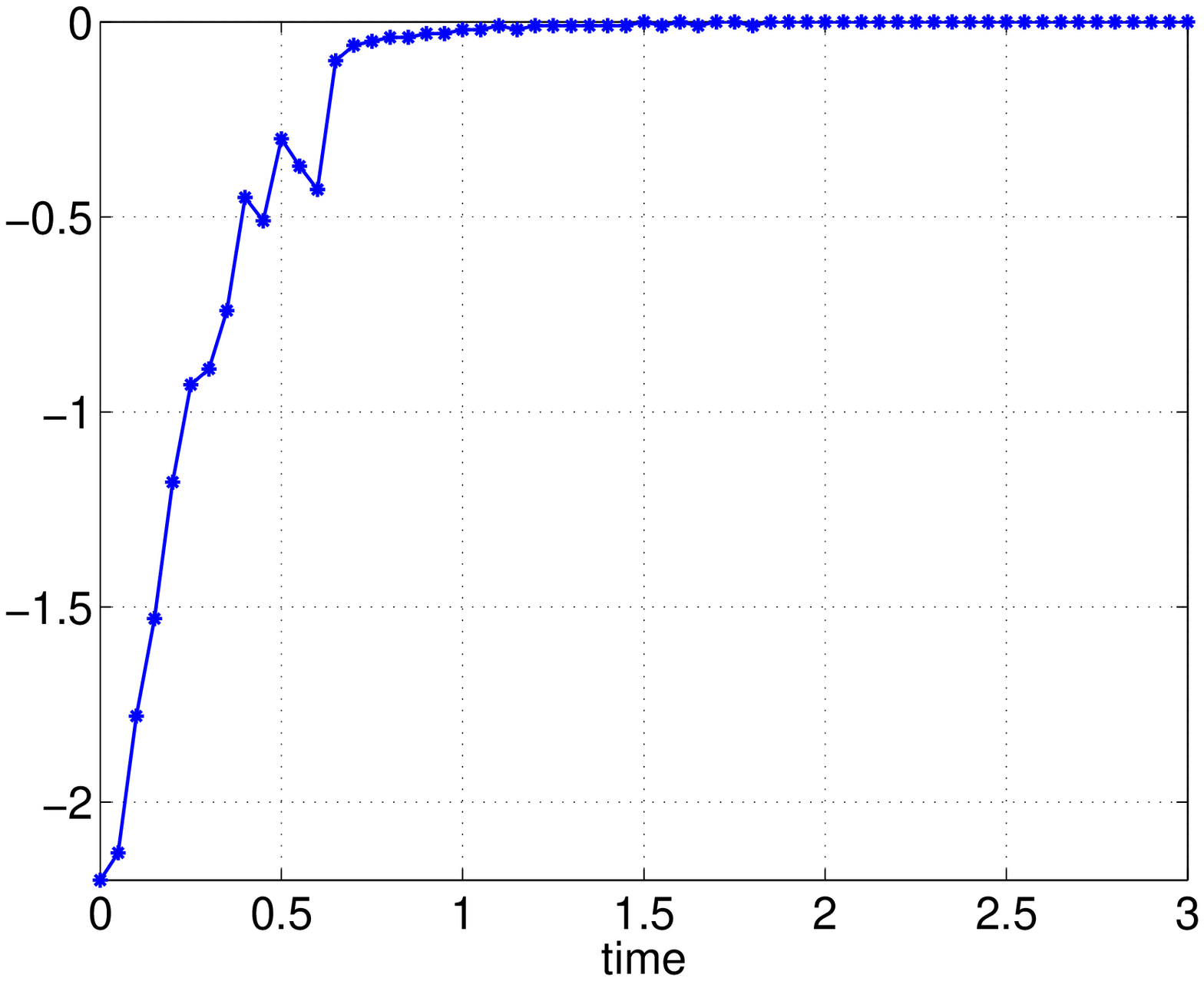}
\includegraphics[scale=0.215]{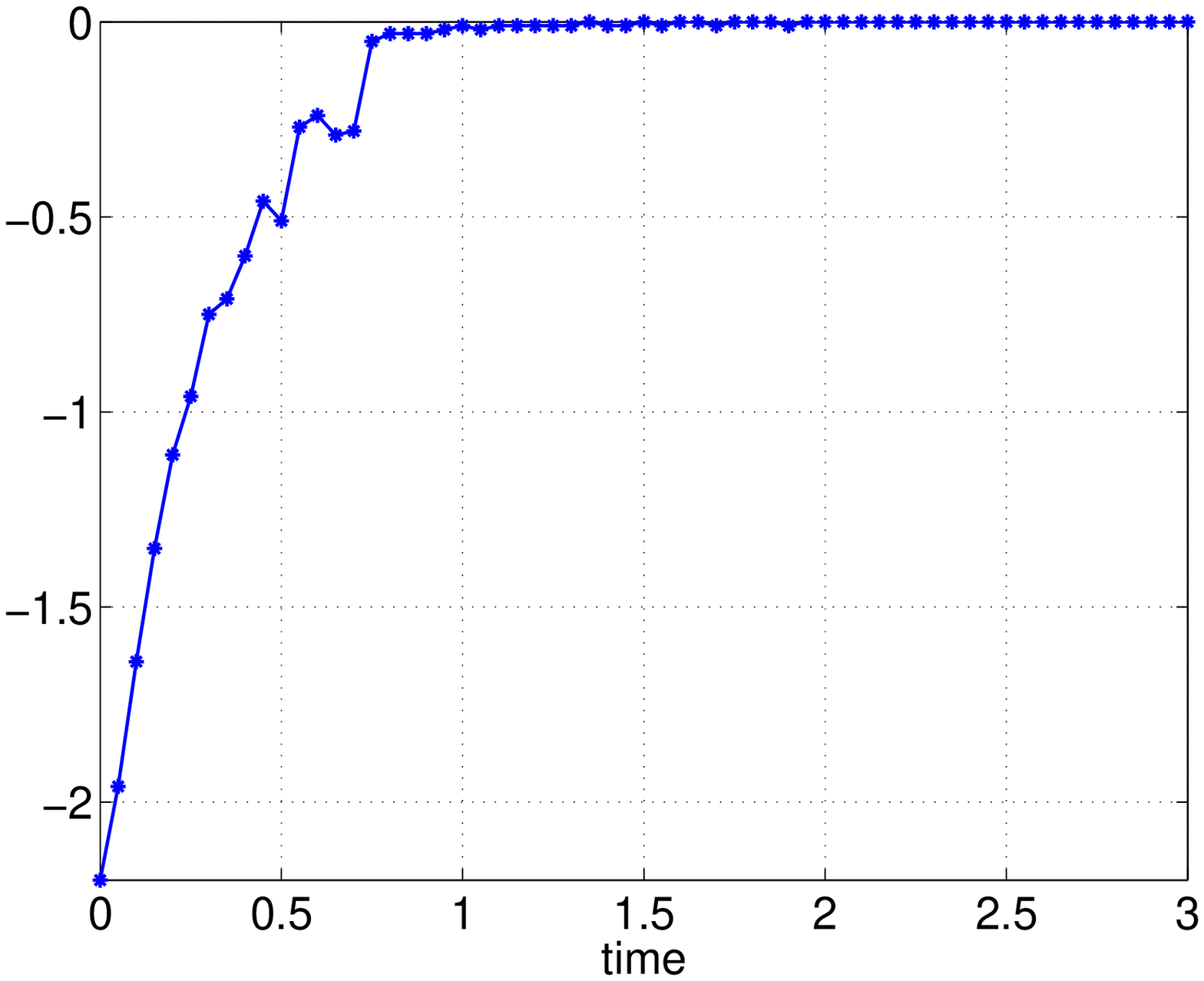}
\includegraphics[scale=0.215]{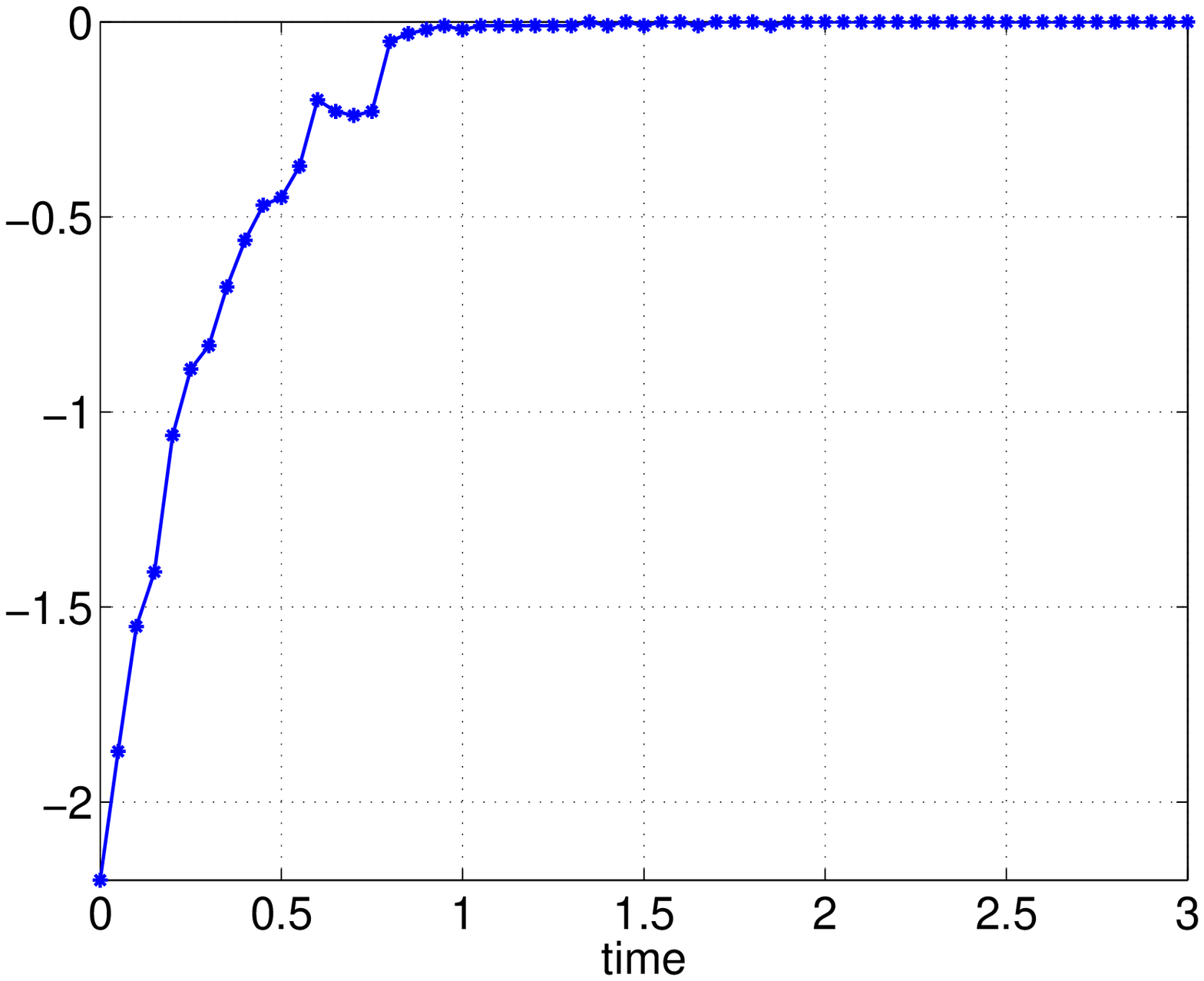}
\caption{Test 1: Optimal HJB states computed with Algorithm \ref{Alg:hjbpod} with $\ell=4$ POD basis functions (top-left), difference between optimal solution with 4 POD basis and 2 POD basis (top-middle), difference between optimal solution with 4 POD basis and 3 POD basis (top-right),  optimal HJB controls with $\ell=2,3,4$ (bottom). }
\label{fig1:hjbsol}
\end{figure}
Since it is hard to visualize differences from the optimal solutions we plot the difference between the optimal solution obtained with 4 POD basis functions and 2 POD basis in the middle of Figure \ref{fig1:hjbsol} and with 3 POD basis on the right side. Nevertheless, one can even have a look at the error analysis in Figure~\ref{fig1:hjberr}. 
\begin{figure}[htbp]
\centering
\includegraphics[scale=0.215]{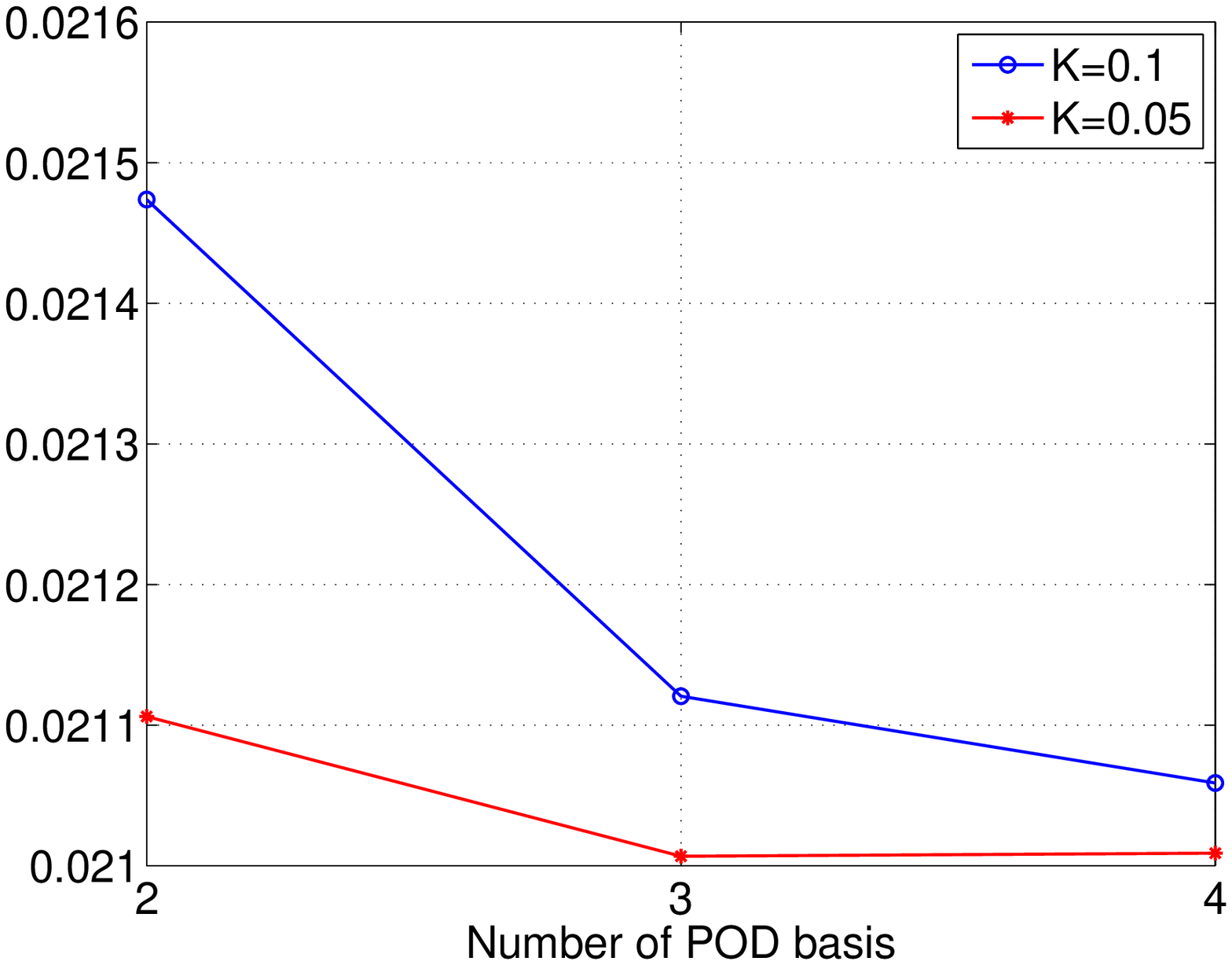}\hfill
\includegraphics[scale=0.215]{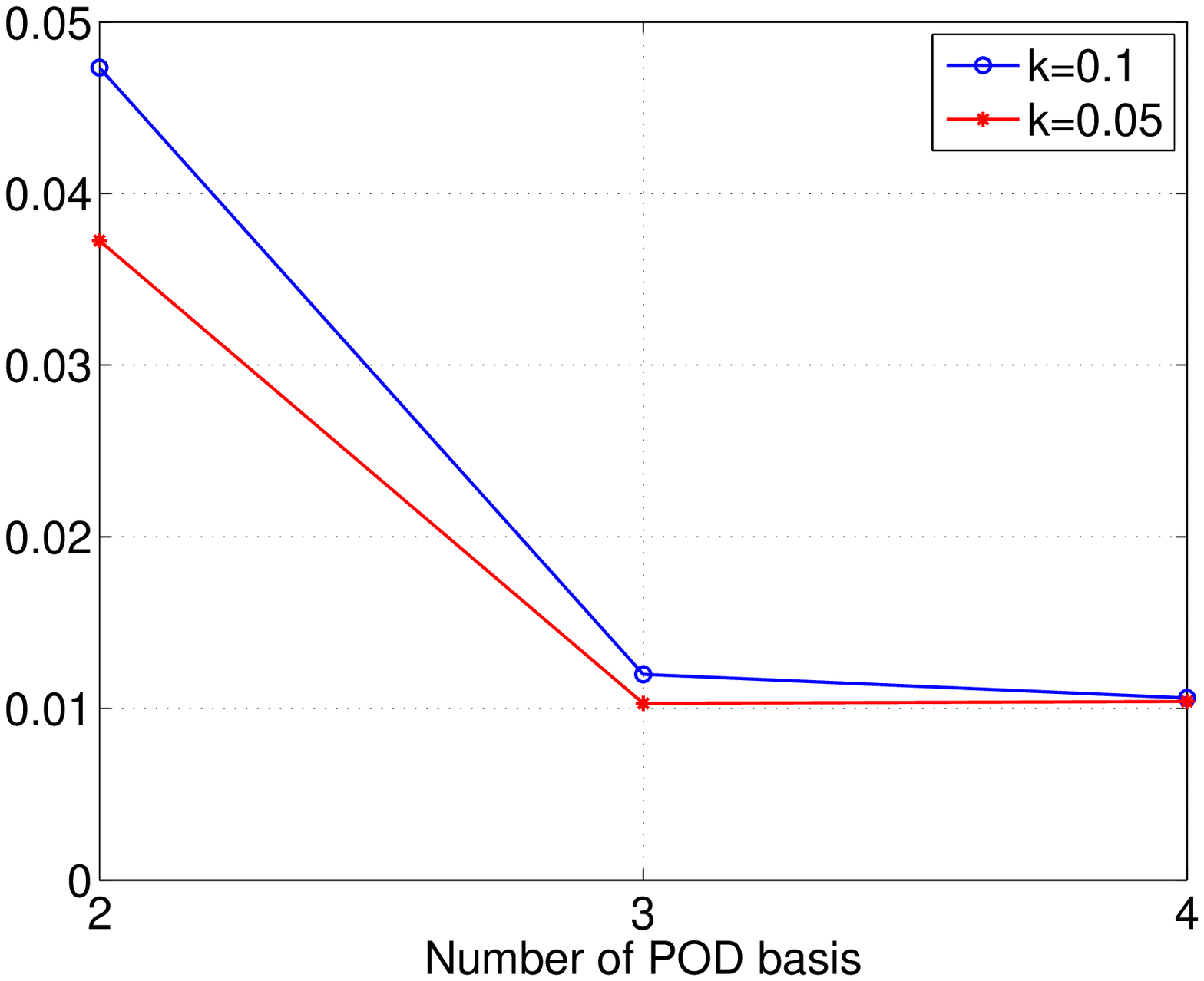}\hfill
\includegraphics[scale=0.215]{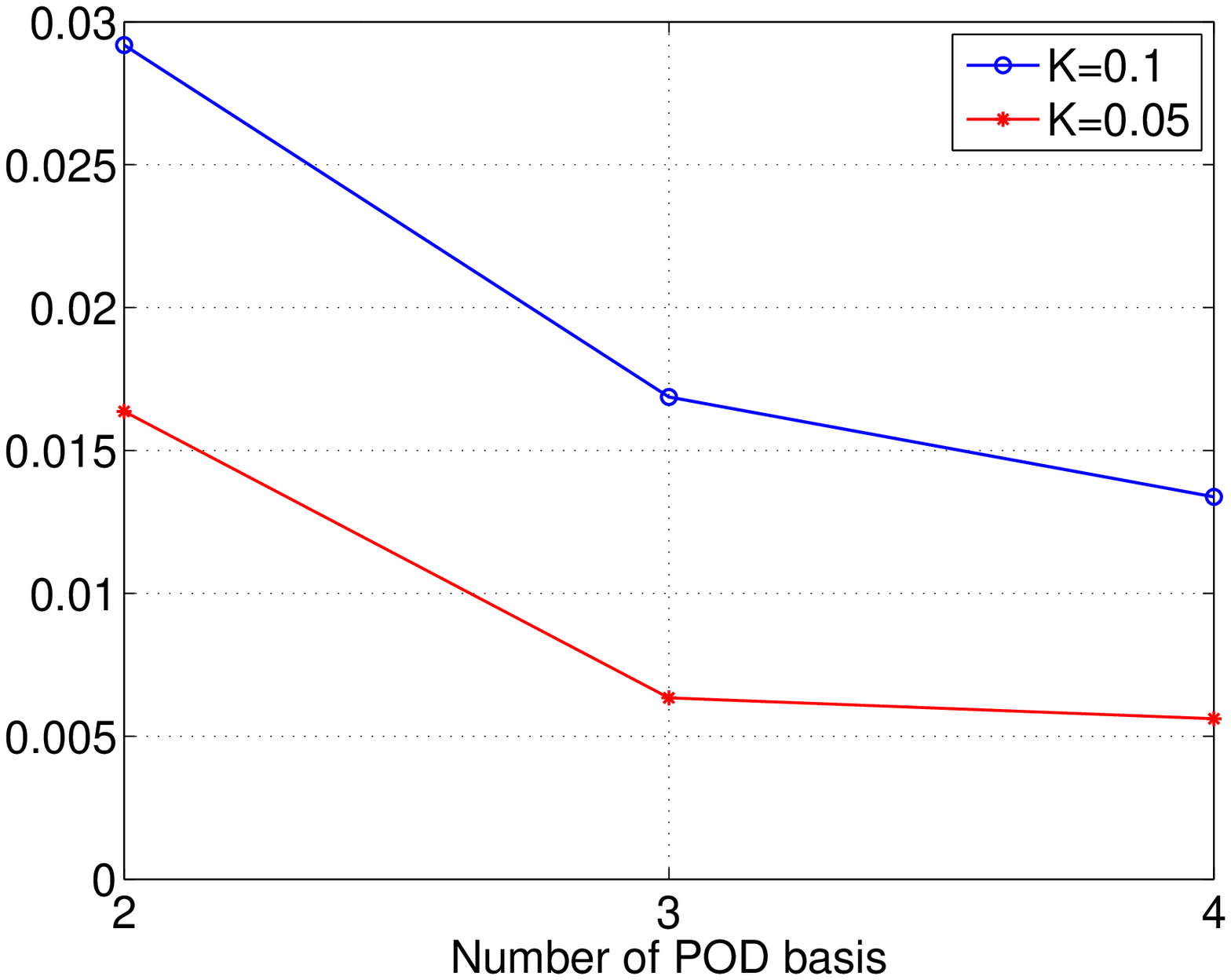}\hfill
\caption{Test 1: Evaluation of the cost functional (left), $L^2-$error for $y(u^\ell)$ and $y^\ell(u^\ell)$ (middle) and $L^2$-error between LQR solution and $y(u^\ell)$. The blue line refers to the approximation with $K=0.1$  in HJB, whereas the red one to $K=0.05$. }
\label{fig1:hjberr}
\end{figure}
In order to analyze our numerical approximation we consider the evaluation of the cost functional $\widehat J(u^\ell;y_\circ)$, the distance between $y(u^\ell)$ and $y^\ell(u^\ell)$ and the error between the truth solution and the suboptimal $y(u^\ell)$.  The error analysis is shown in Figure~\ref{fig1:hjberr}. On the left we show the decay of the cost functional when we increase the number of POD basis functions.  In the middle we compute the $L^2-$error between the optimal reduced solution $y^\ell(u^\ell)$ and the suboptimal solution $y(u^\ell)$. Even in this case the error decays when $\ell$ increases and $K$ decreases. This error measures the quality of the surrogate model, since we want to check whether the suboptimal control fits into the non-reduced problem. Finally, on the right, we compute the error between the optimal solution and the suboptimal $y(u^\ell)$. As expected, increasing the number of basis function and decreasing the step size $K$ (remember $h$ and $K$ are linked) for the approximation of the value function the optimal solution is improved.

The decay of the singular values is presented in Figure~\ref{fig1:svd}. As we can see they do not decay really fast with respect to the right plot which refers to the next example where the convection term is not dominated.\\
Finally, we want to give an idea of the term $\sup_{y\in\overline\Omega}{\|y-\mathcal P^\ell y\|}_2/h$ in the error estimate \eqref{AEst2}. It is clear we do not know $\Omega$, but we chose several randomly control sequences in the set of admissible controls. in order to have an approximation of the set. Now, we can compute the aforementioned error term. The decay is shown on the right of Figure \ref{fig1:svd}.
\begin{figure}[htbp]
\centering
\includegraphics[scale=0.215]{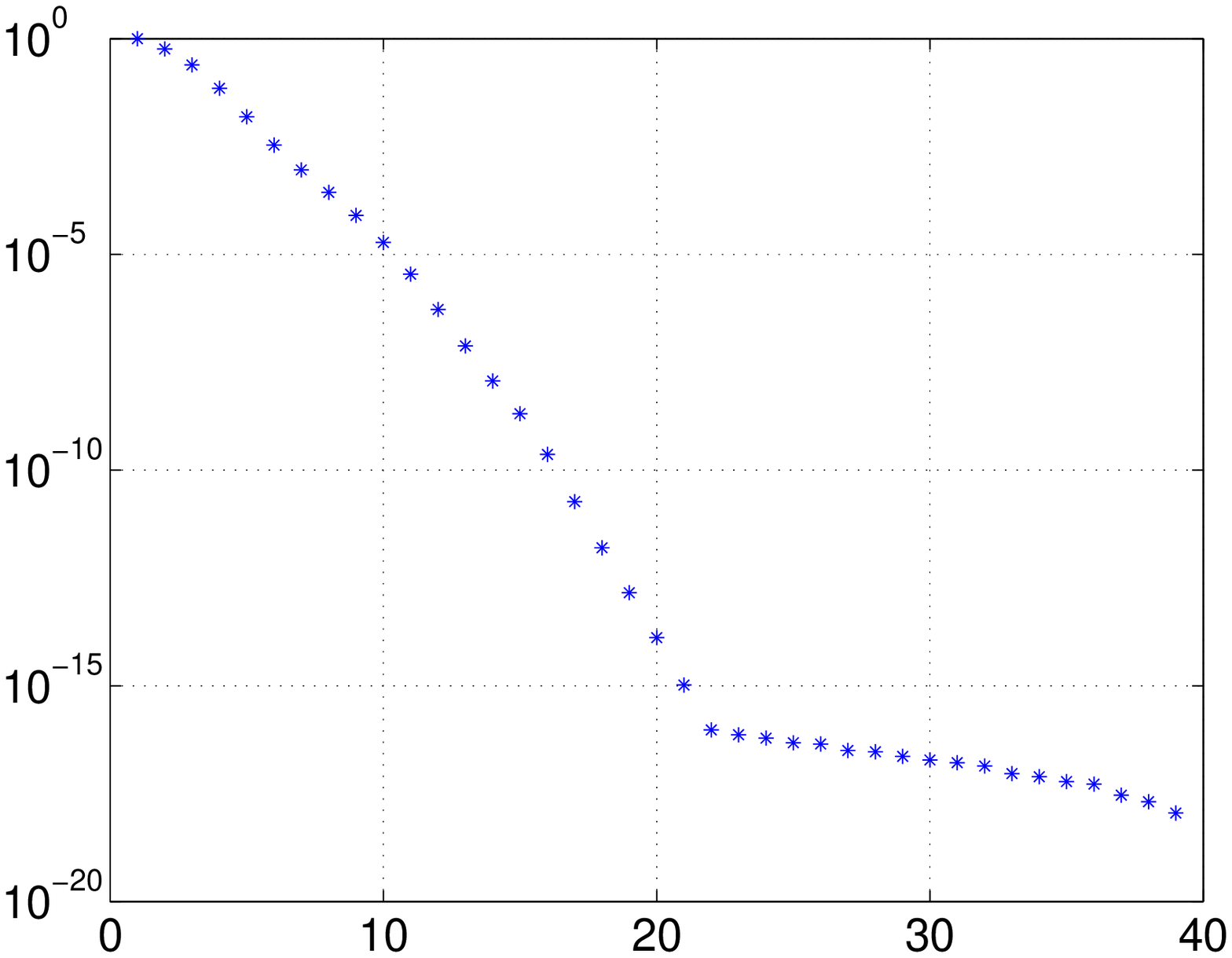}\hfill
\includegraphics[scale=0.215]{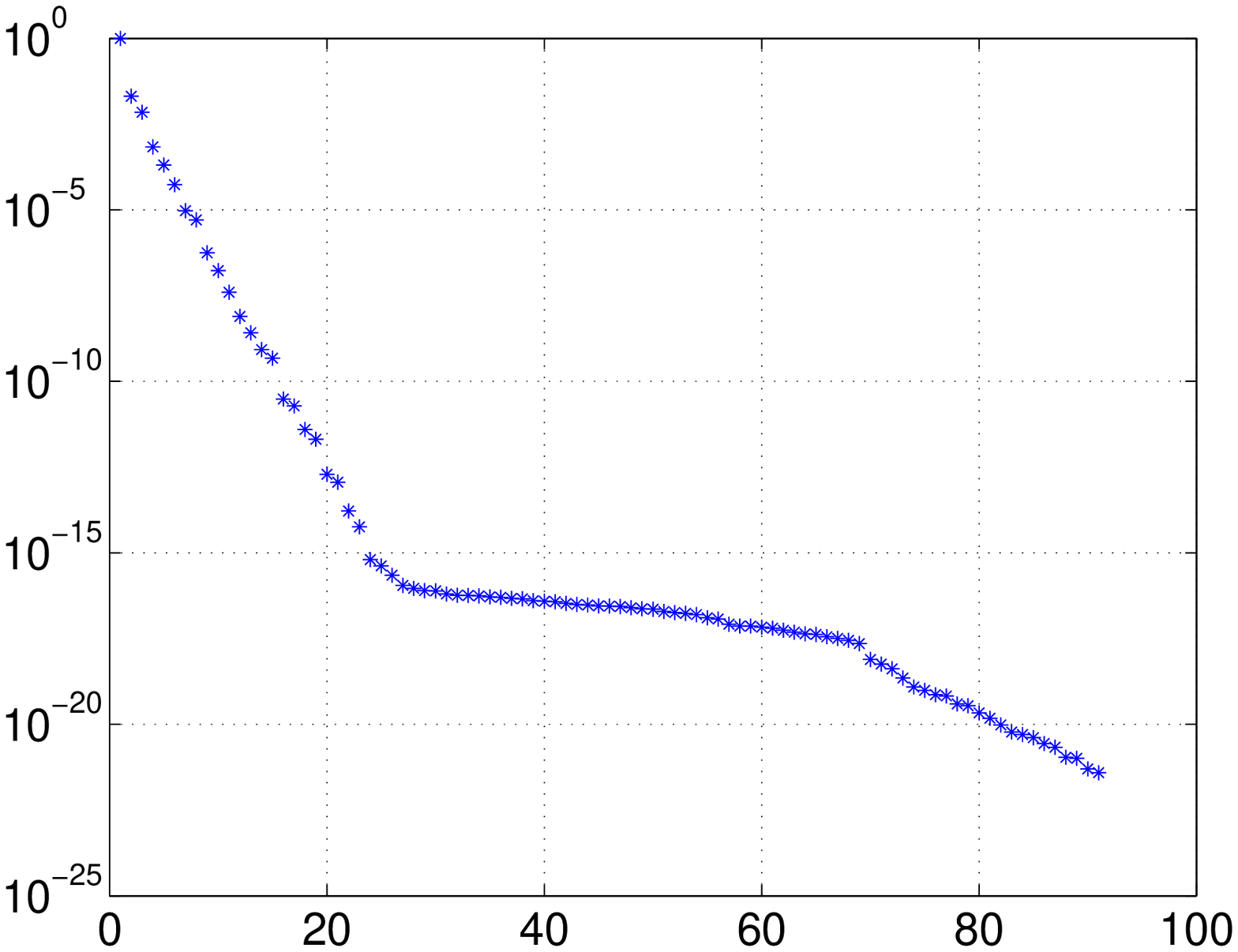}\hfill
\includegraphics[scale=0.215]{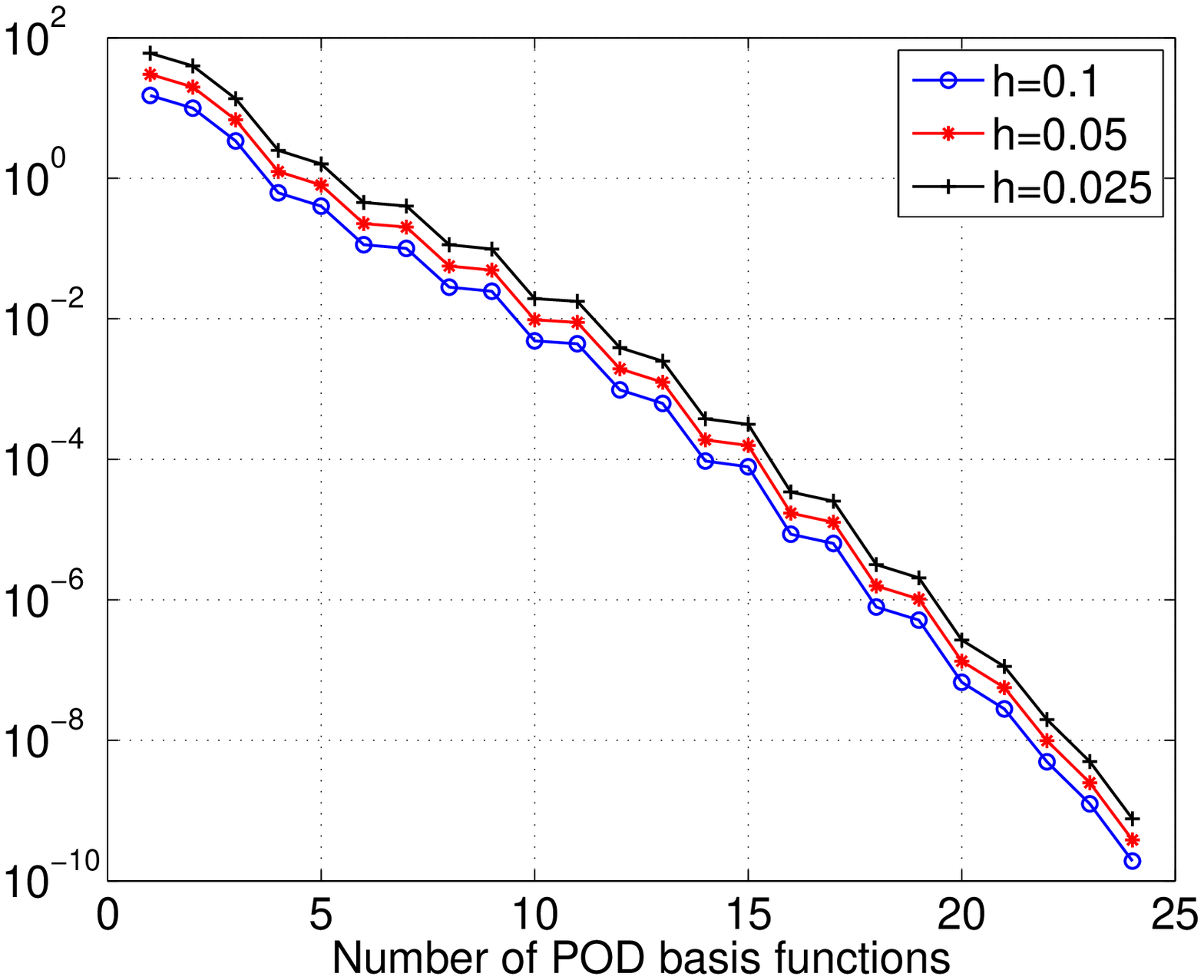}
\caption{Decay of the singular values for the snapshots set associated with Test 1 (left) and Test 2 (right).}
\label{fig1:svd}
\end{figure}

\subsection{Test 2: Semi-linear equation}

The second test concerns the semi-linear equation. In \eqref{state:eq} we set $\te=3$, $\varepsilon=10^{-1}$, $\gamma=0.1$, $\mu=1$, $\omega=(0,1)$ and $w_\circ(x)=2(x-x^2)$. The shape function $b$ is equal to the initial condition $w_\circ$. In \eqref{CostEx6.1} we choose $\lambda=1$, and $\bar w=0$. To compute the POD basis we determine solutions to the state equation for controls in the set $U_{\mbox{snap}}=\{-1,0,1\}$ with a semi-implicit finite difference scheme with time step of $0.05$ and space step of $0.01$. In \eqref{hjb_hkPOD2} we consider $K\in\{0.1,0.05\}, h=0.1K$. The optimal trajectory is obtained with a time step size of $0.05$. The control set is given by 21 controls equally distributed from -1 to 1. The shape function $b$ is equal to the initial condition $w_\circ$.

The uncontrolled solution is shown on the left of Figure~\ref{fig2:hjbsol}. 
\begin{figure}[htbp]
\centering
\includegraphics[scale=0.215]{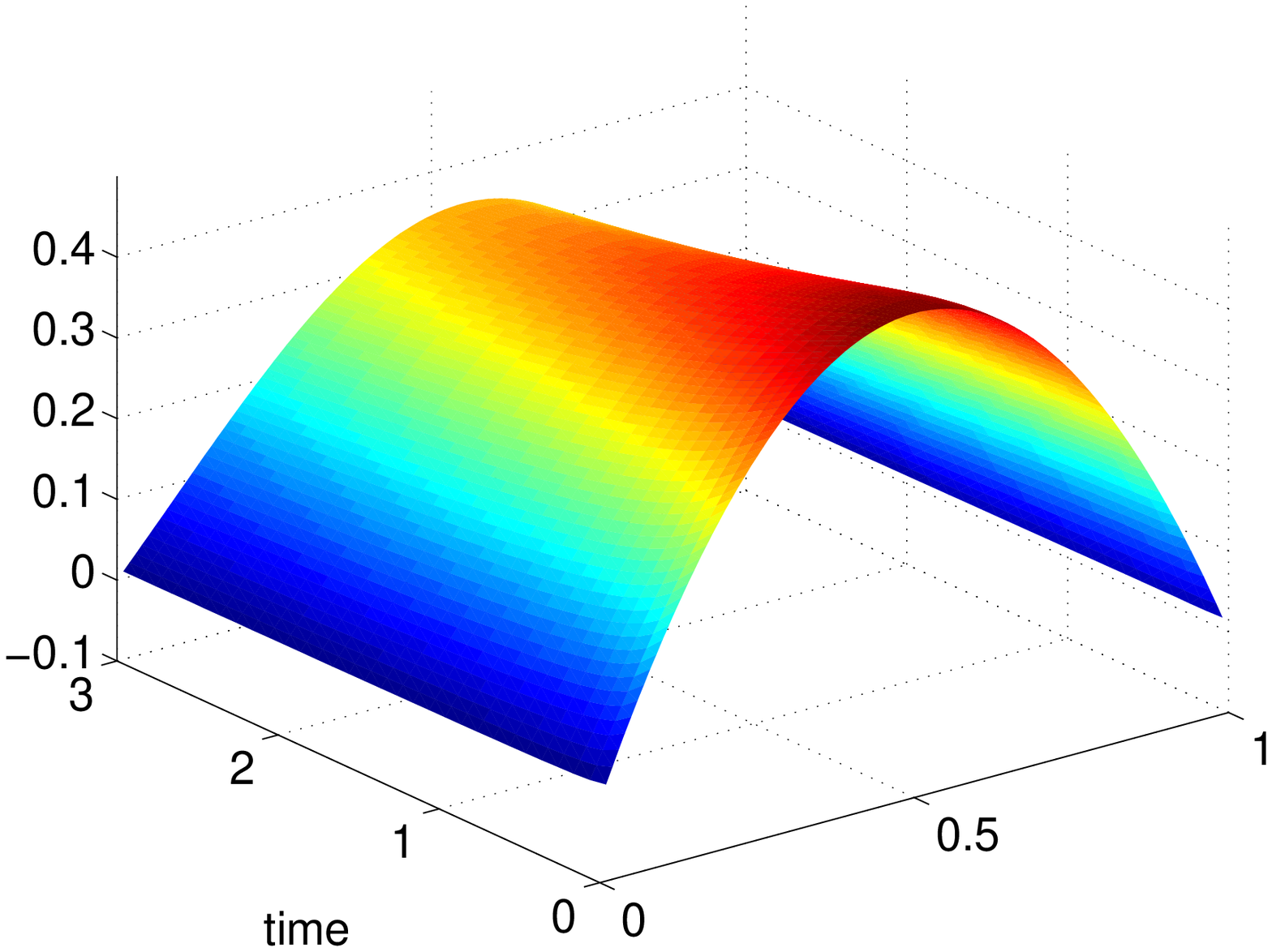}\hfill
\includegraphics[scale=0.215]{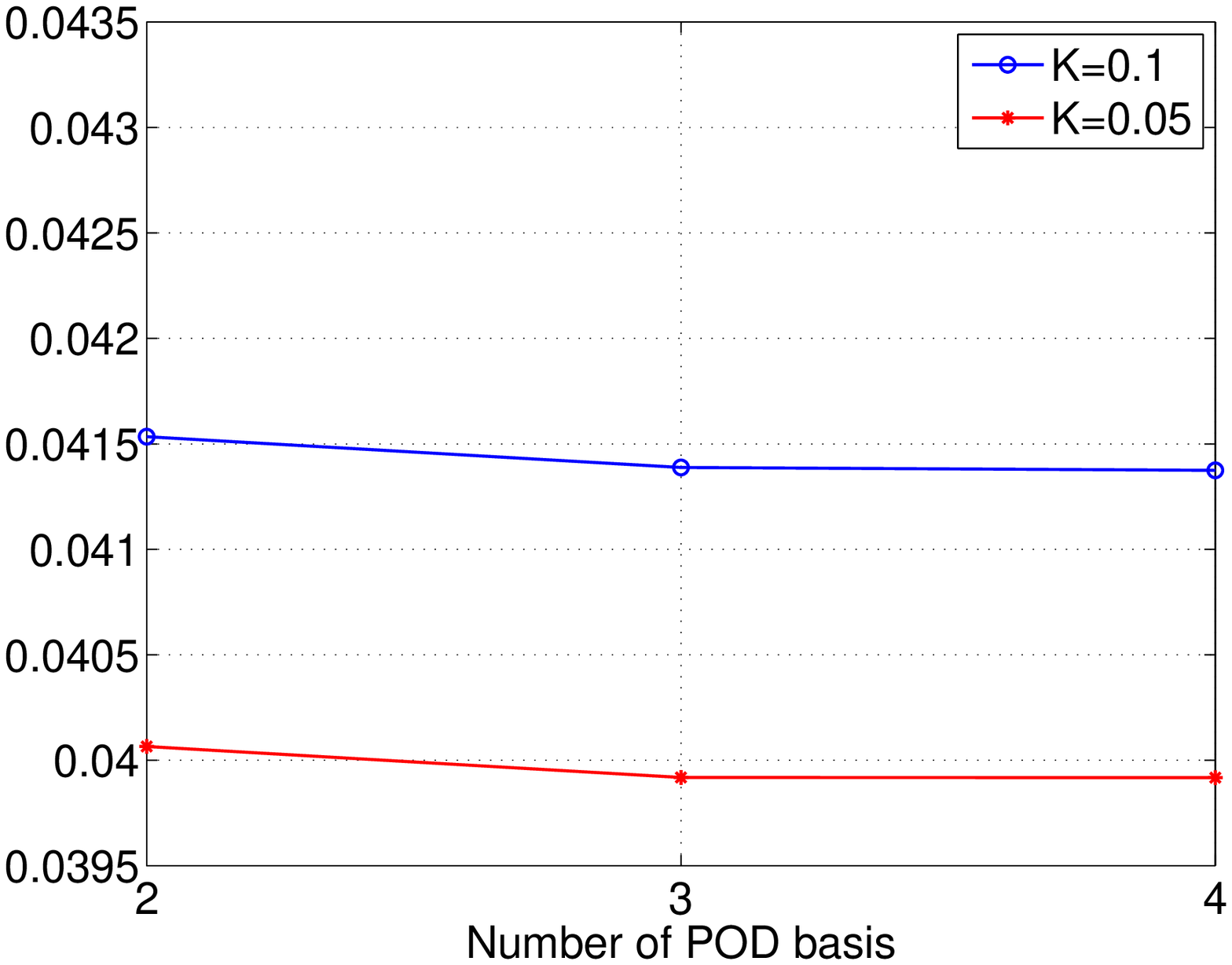}\hfill
\includegraphics[scale=0.215]{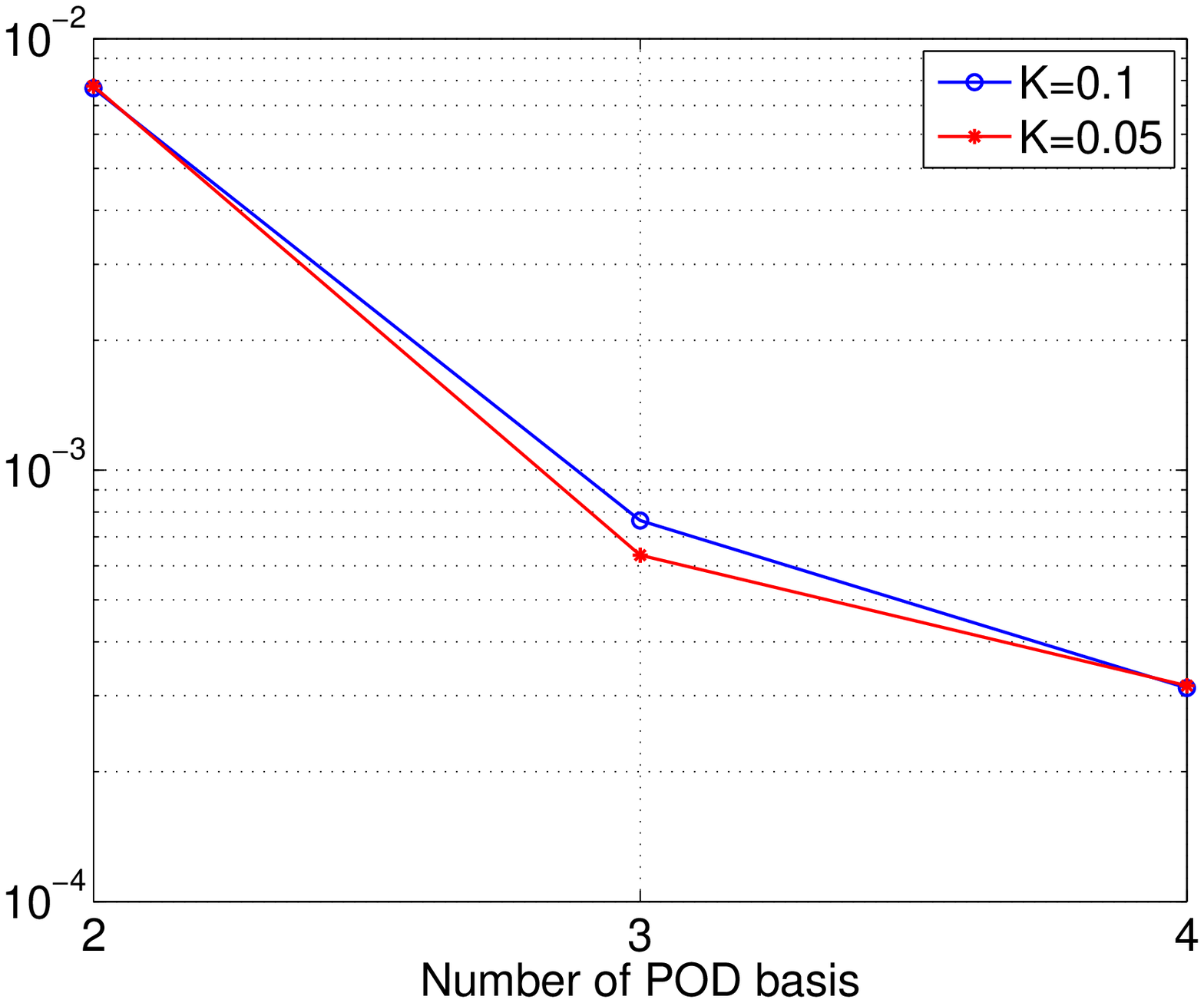}\hfill
\caption{Test 2: Uncontrolled solution (left), evaluation of the cost functional (middle) and distance between $y(u^\ell)$, $y^\ell(u^\ell)$ (right).}
\label{fig2:hjbsol}
\end{figure}
As we can see the semi-linear part does not allow to stabilize to zero the solution. Our goal is to steer the solution to the origin. The optimal solutions and its correspondent optimal controls are shown in Figure~\ref{fig2:hjbcon}. Moreover we plot the differences between the computed solutions (please note the different scaling of the pictures). As we can see the difference decreases when the number of POD basis functions increase.
\begin{figure}[htbp]
\centering
\includegraphics[scale=0.215]{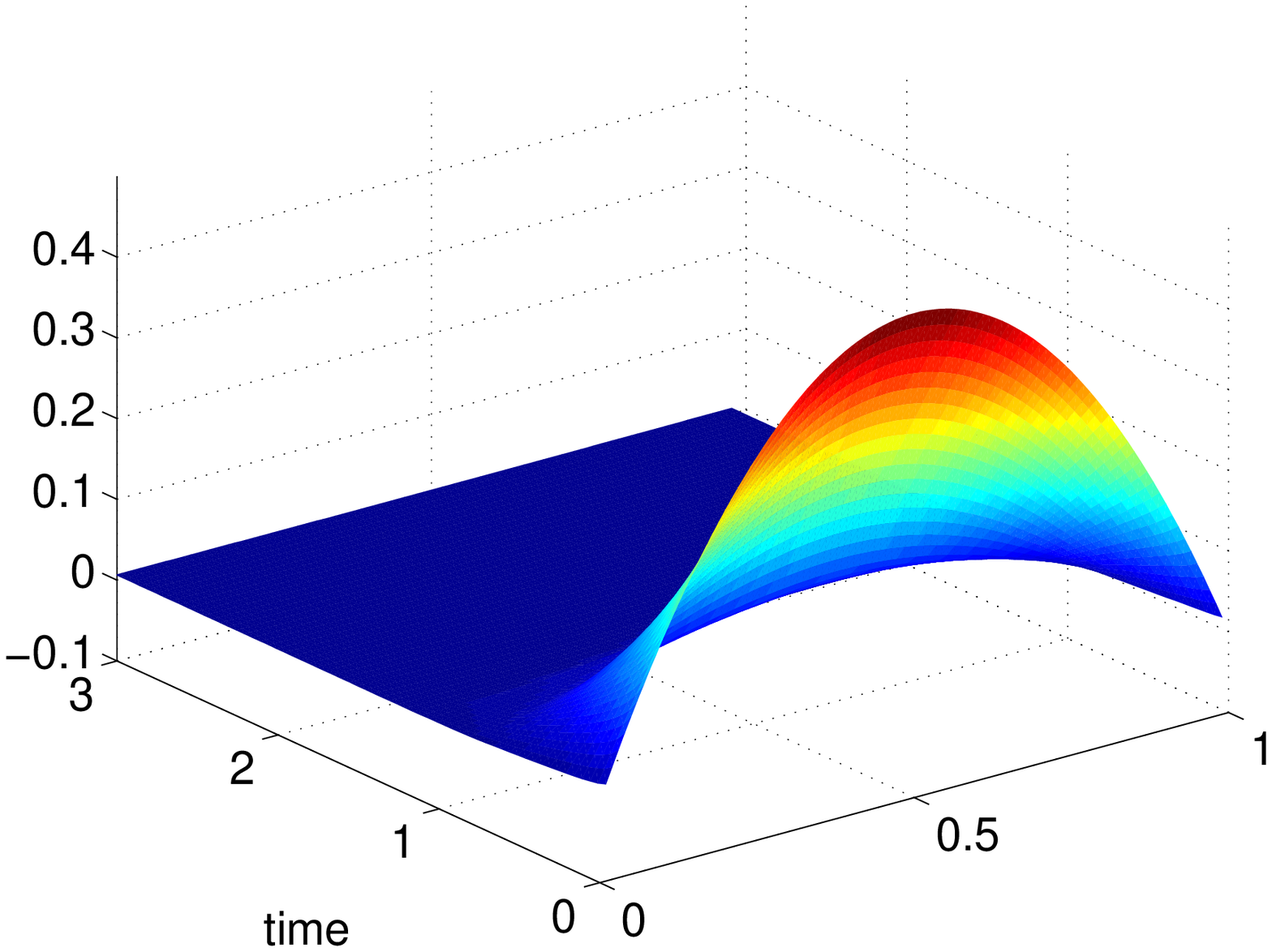}\hfill
\includegraphics[scale=0.215]{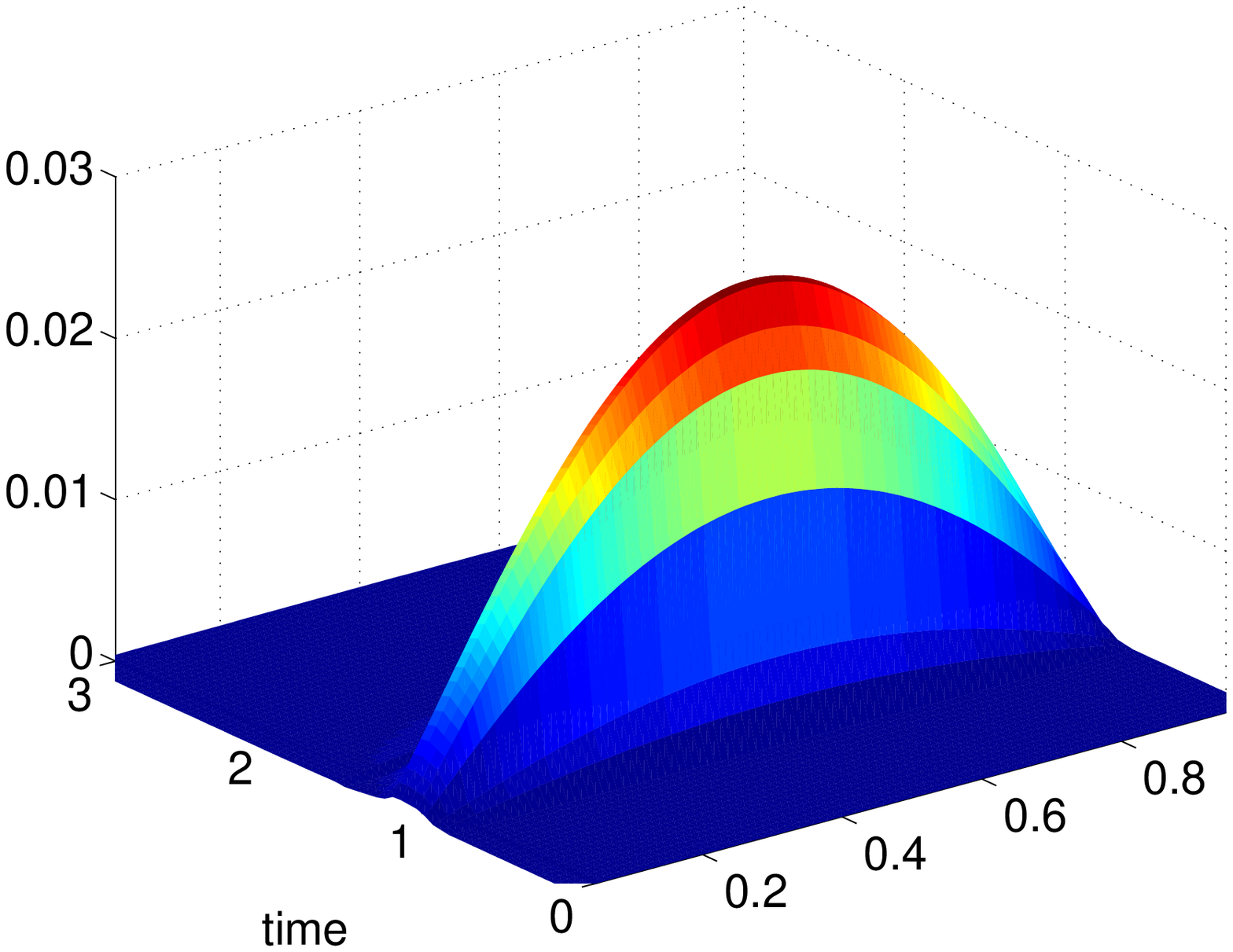}\hfill
\includegraphics[scale=0.215]{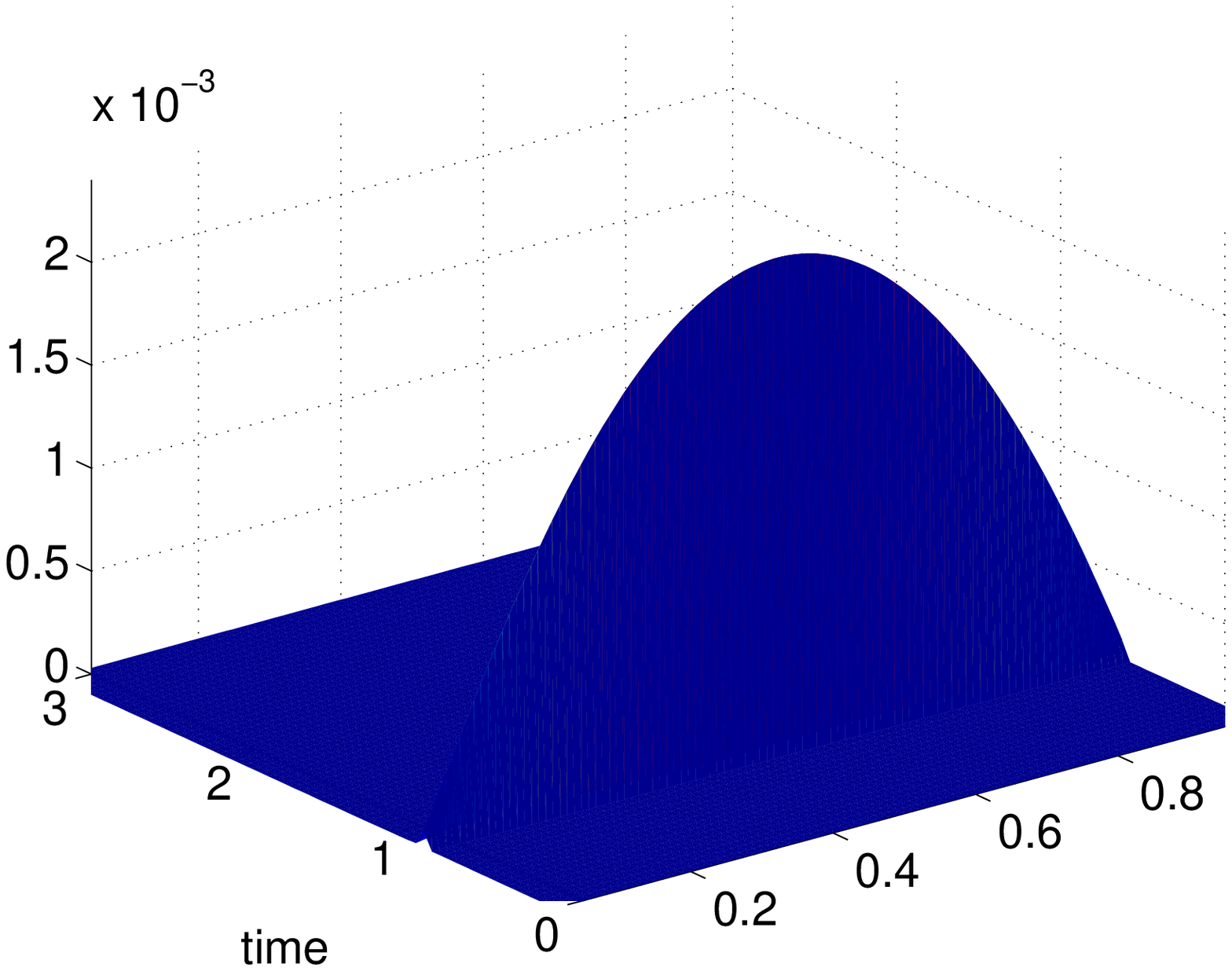}\hfill
\includegraphics[scale=0.215]{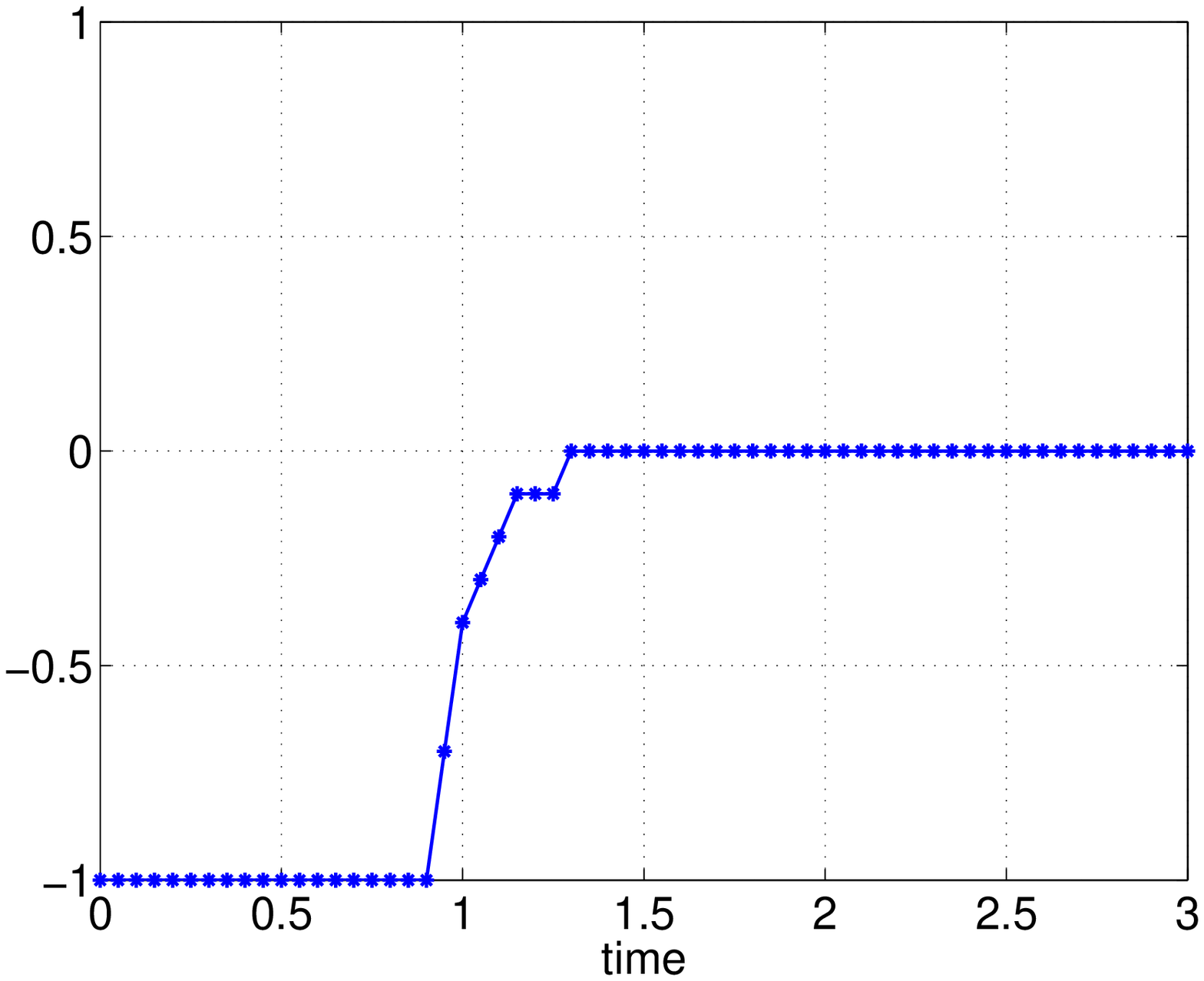}\hfill
\includegraphics[scale=0.215]{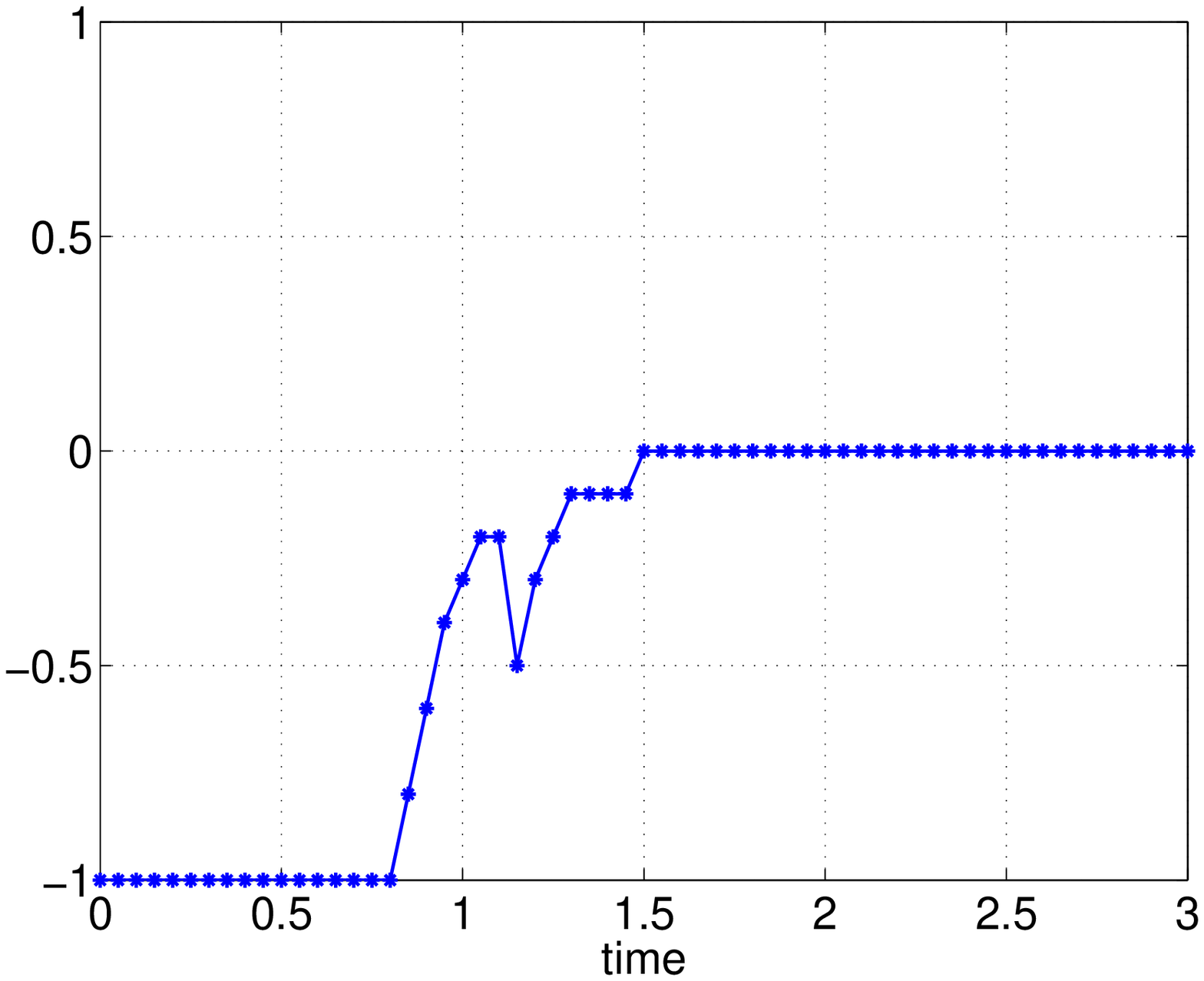}\hfill
\includegraphics[scale=0.215]{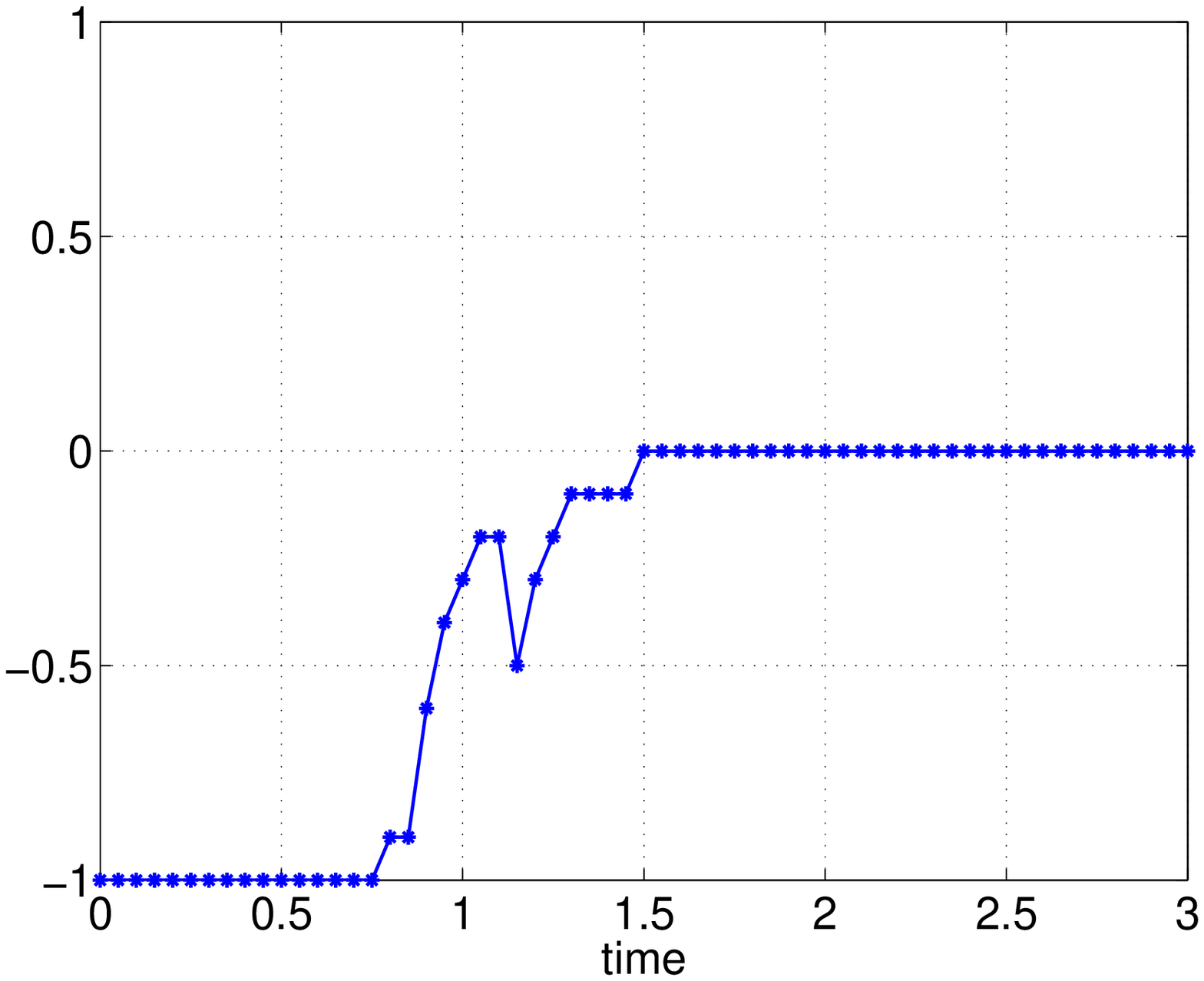}\hfill
\caption{Test 2: Optimal HJB states computed with Algorithm \ref{Alg:hjbpod} with $\ell=4$ POD basis functions (top-left), difference between optimal solution with 4 POD basis and 2 POD basis (top-middle), difference between optimal solution with 4 POD basis and 3 POD basis (top-right),  optimal HJB controls with $\ell=2,3,4$ (bottom). }
\label{fig2:hjbcon}
\end{figure}
The quality of our approximation is confirmed by Figure \ref{fig2:hjbsol} where we can see from the evaluation of the cost functional and consistency of the suboptimal control. In this case the error decays much faster than in the previous example. This depends on the decay of the singular value of the snapshots set as shown in Figure~\ref{fig1:svd}. 

\subsection{Test 3: Semi-linear equation with uniform noise}

In this test we deal with the semi-linear equation discussed in the previous example but  we neglect the convection term ($\gamma=0$) and we add noise to the optimal trajectory. The uncontrolled solution is shown on the left of Figure \ref{fig3:nopert}, the optimal trajectory and control computed by means of Algorithm \ref{Alg:hjbpod} are in the middle and the right side of Figure \ref{fig3:nopert}.

\begin{figure}[htbp]
\includegraphics[scale=0.215]{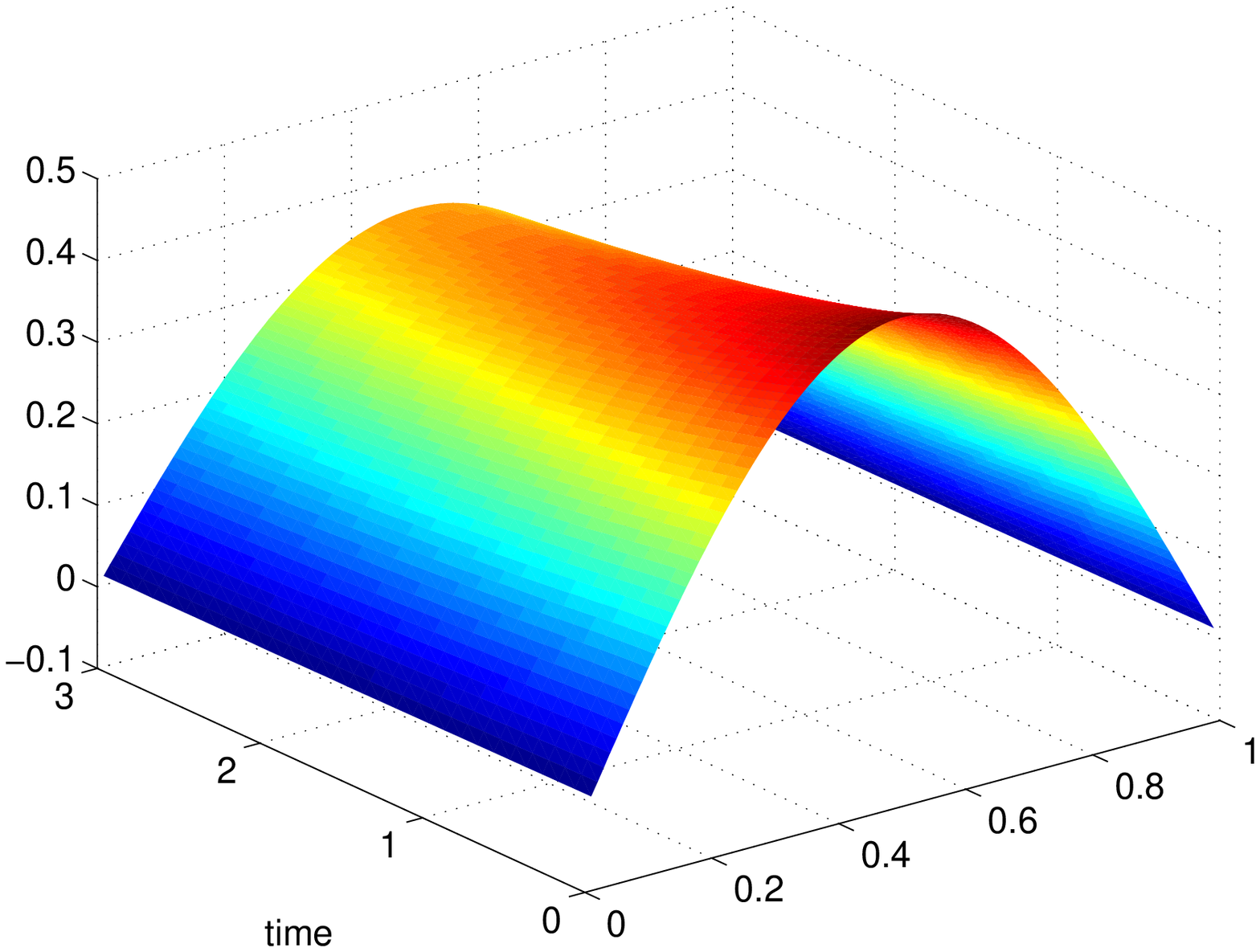}\hfill
\includegraphics[scale=0.215]{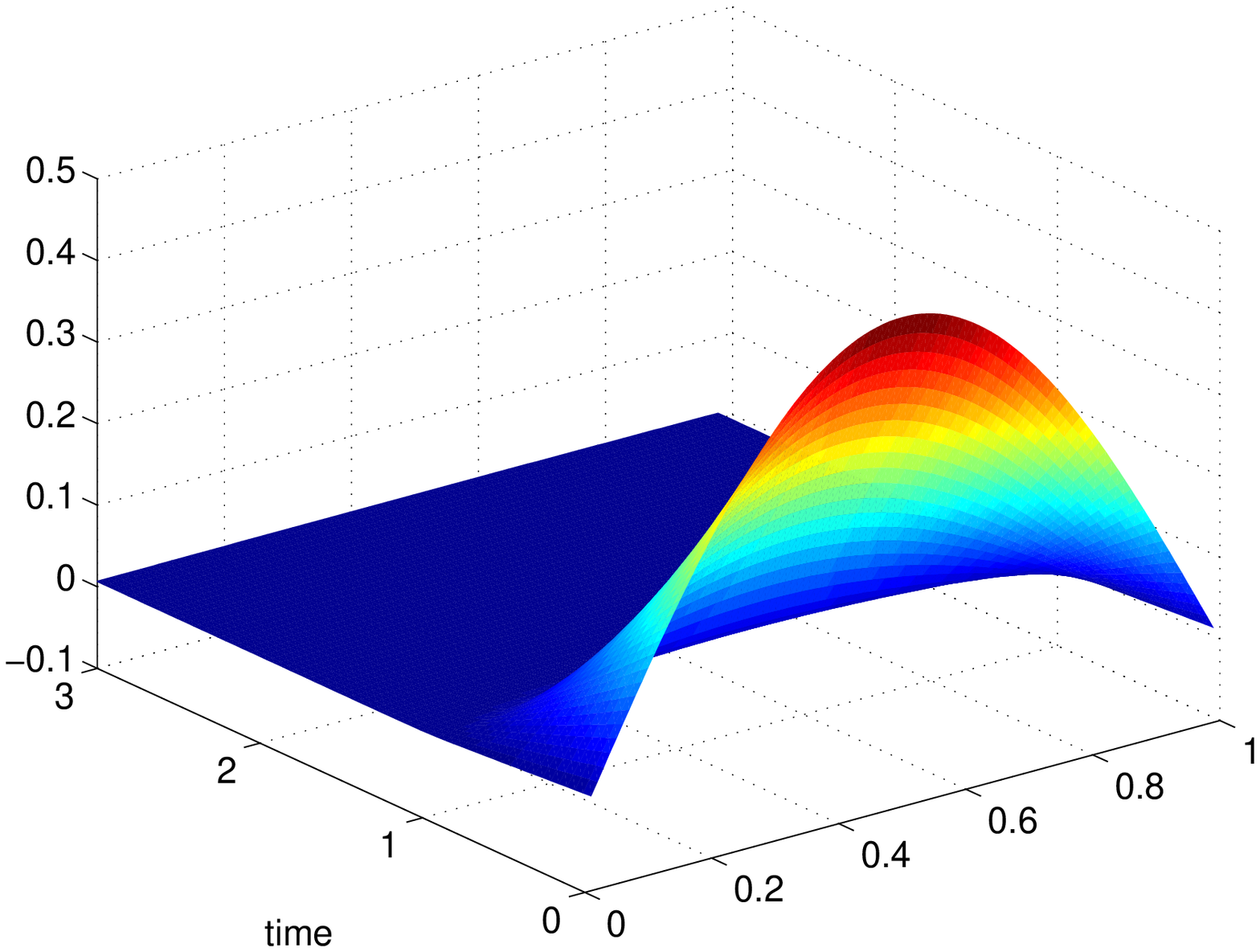}\hfill
\includegraphics[scale=0.215]{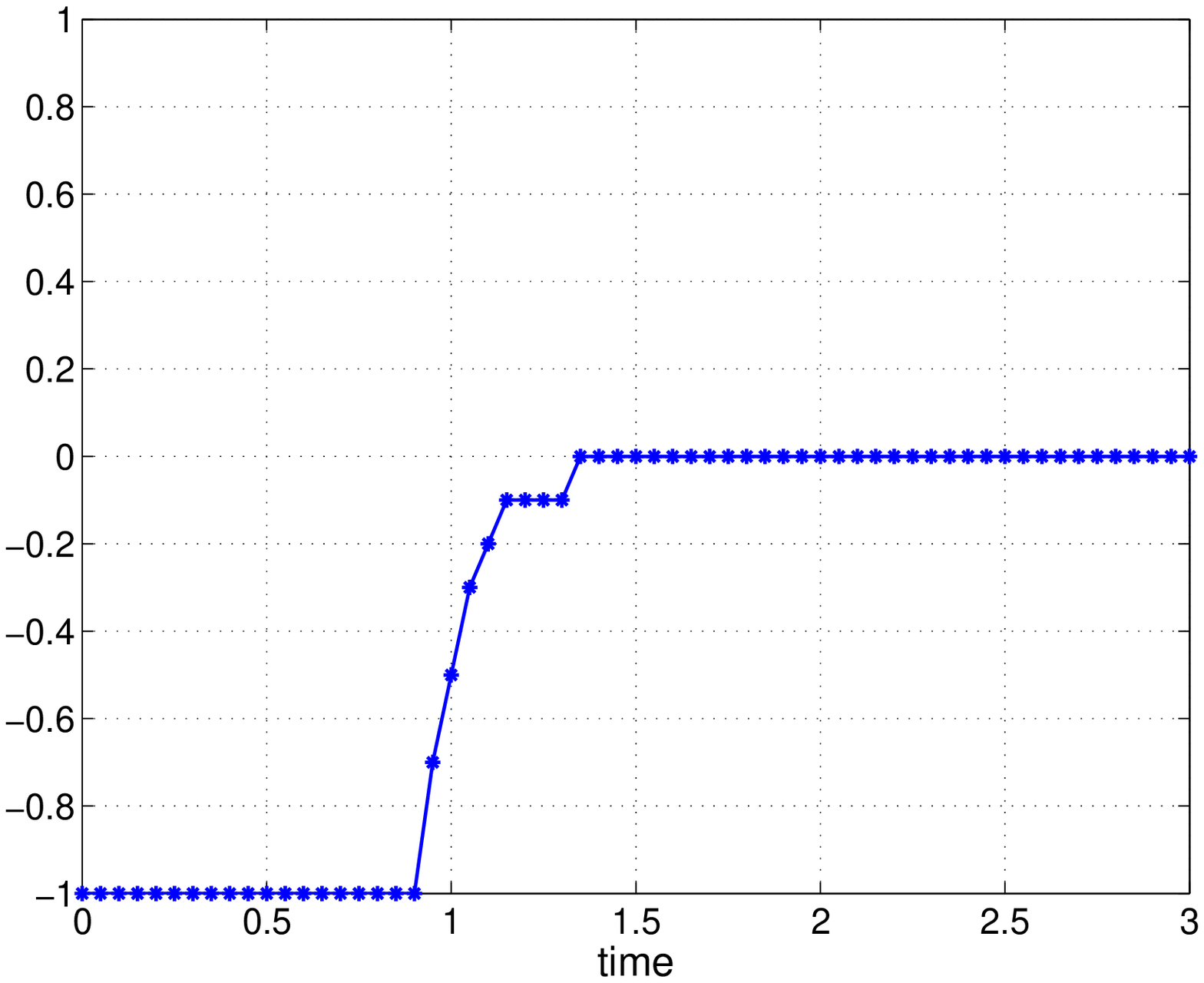}\\
\caption{Test 3: Uncontrolled solution (left),  Optimal state (middle) and corresponding optimal HJB control (right).}
\label{fig3:nopert}
\end{figure}

\noindent
The goal is to show the stabilization of the feedback control under strong perturbations of the system. We note that in this case the value function is stored from the system without perturbation, but the reconstruction of the feedback control is affected by uniform noise $\eta(x)$ between $[-1,1]$ in every time step: $y(x,\cdot)=(1+\eta(x))y(x,\cdot)$. Figure~\ref{fig3:pert} shows optimal solution and control corresponding to different noise levels ($|\eta(x)|\leq50\%$ (top) and $|\eta(x)\leq90|\%$ (bottom)).
\begin{figure}[htbp]
\includegraphics[scale=0.33]{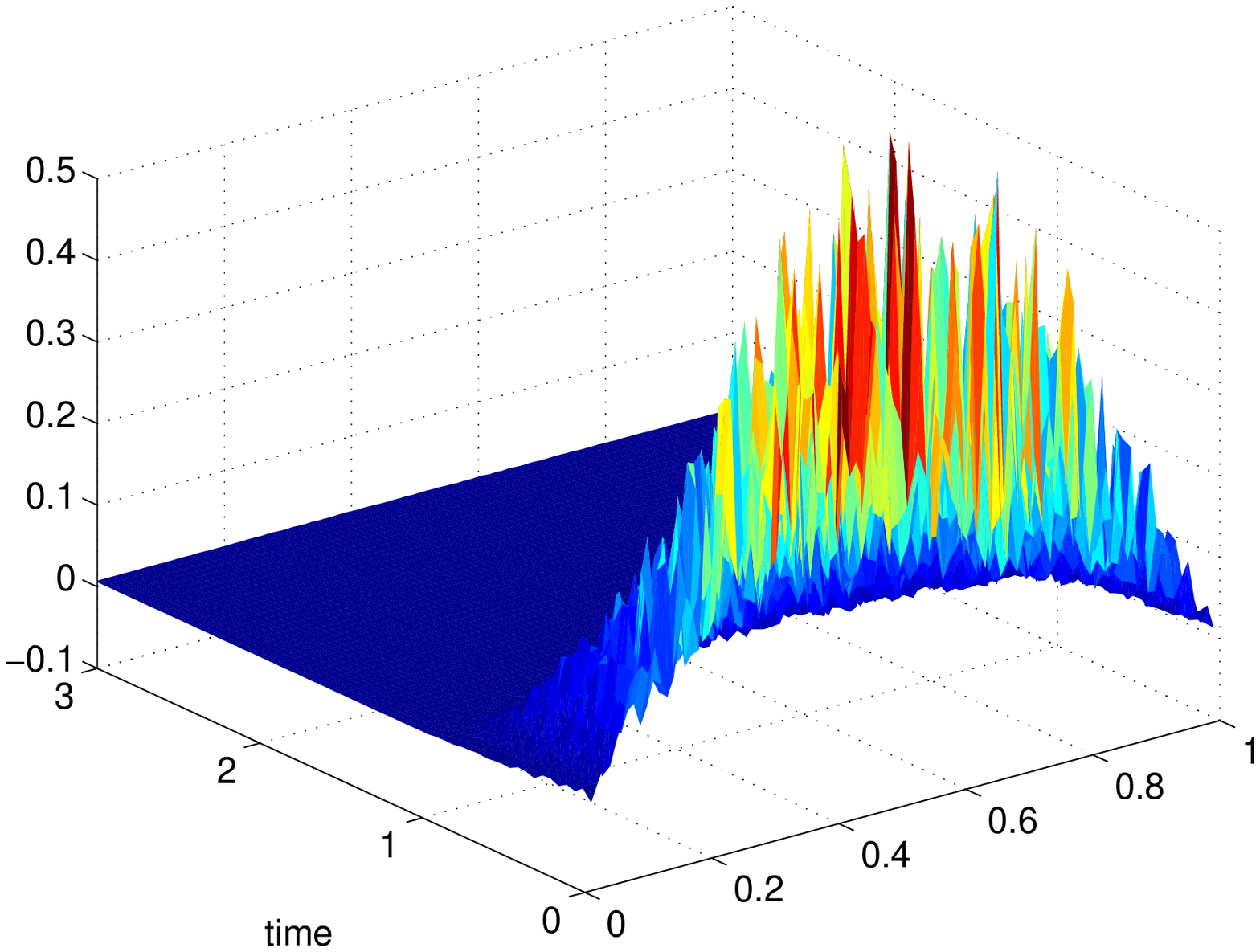}
\includegraphics[scale=0.33]{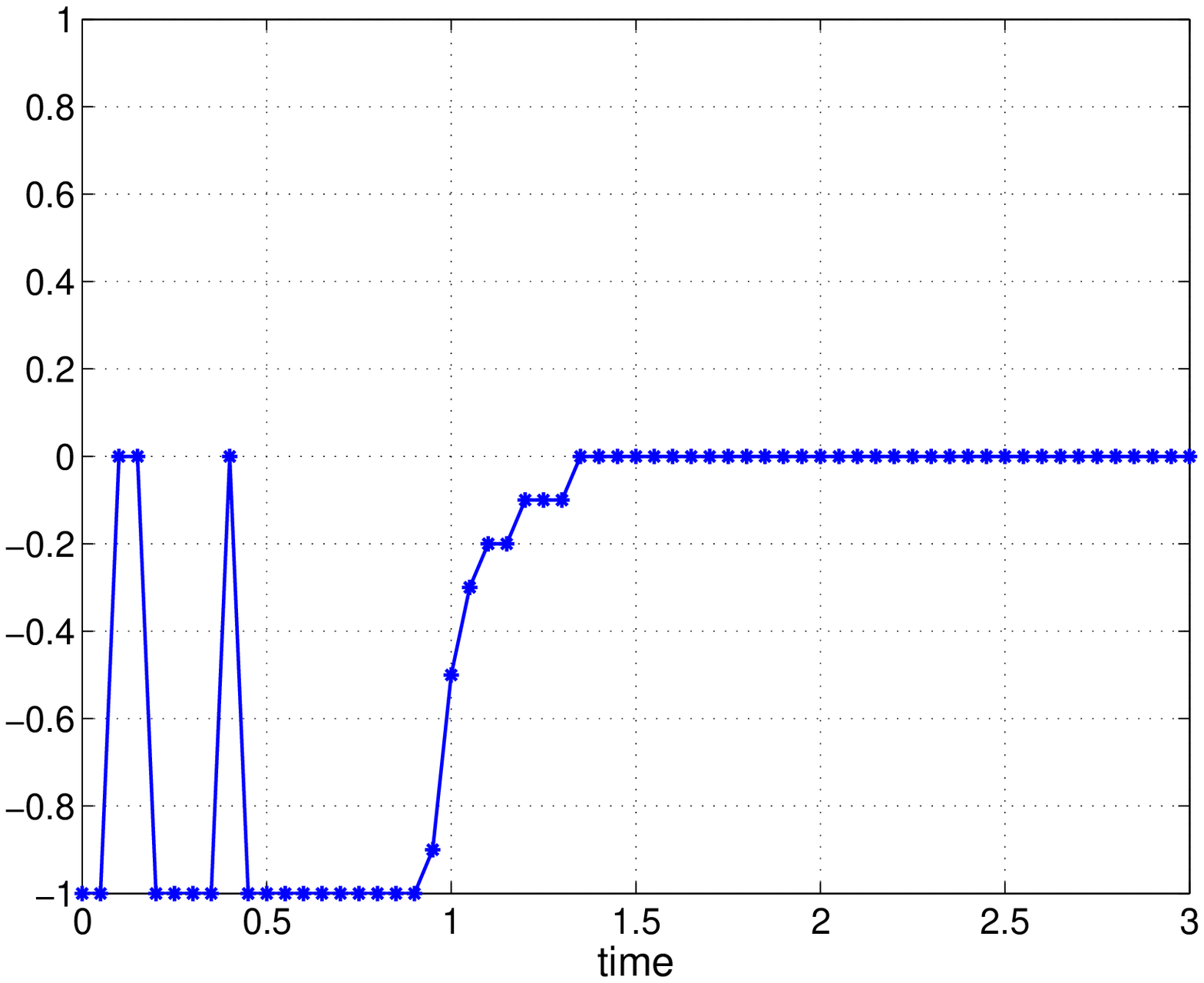}\\
\includegraphics[scale=0.33]{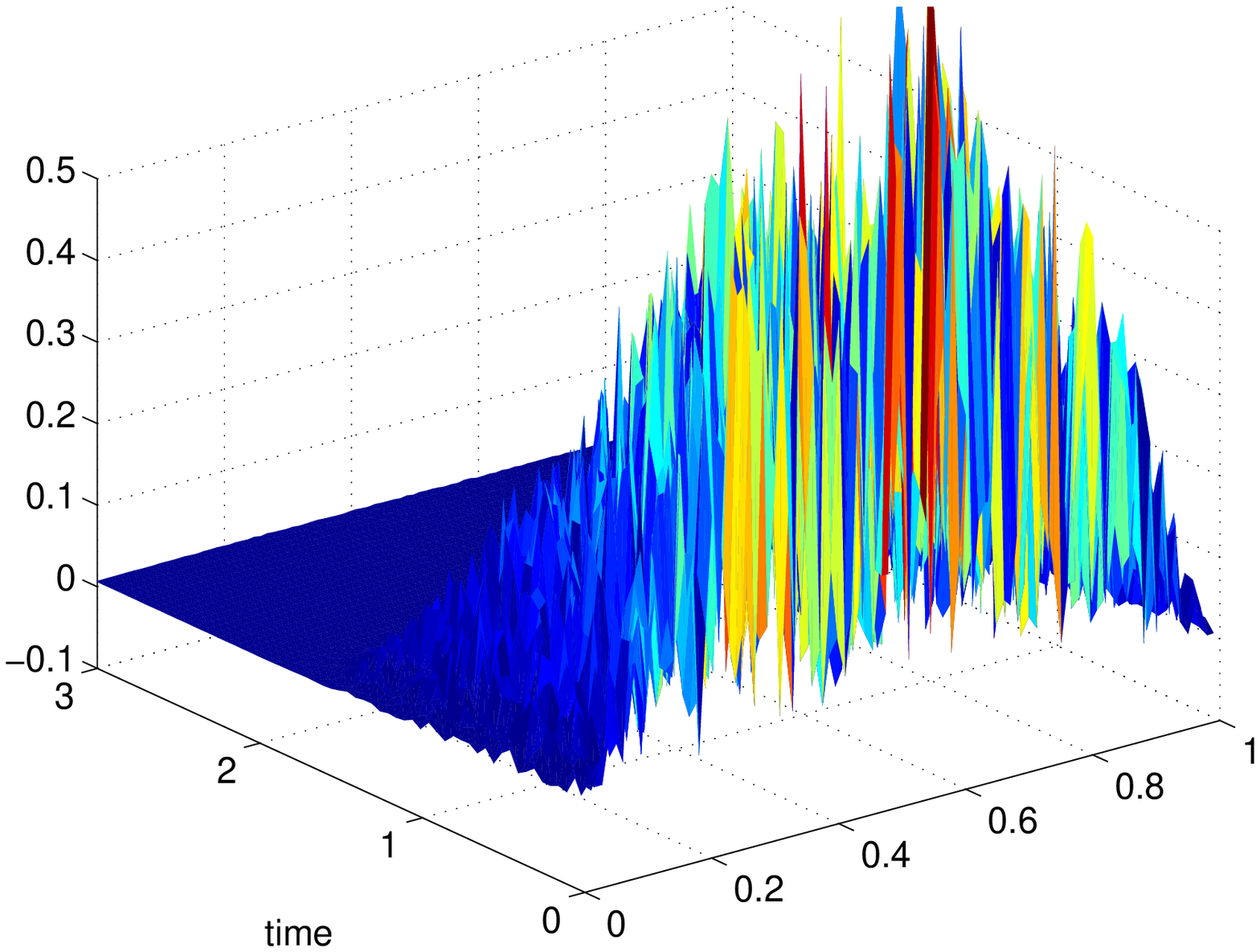}
\includegraphics[scale=0.33]{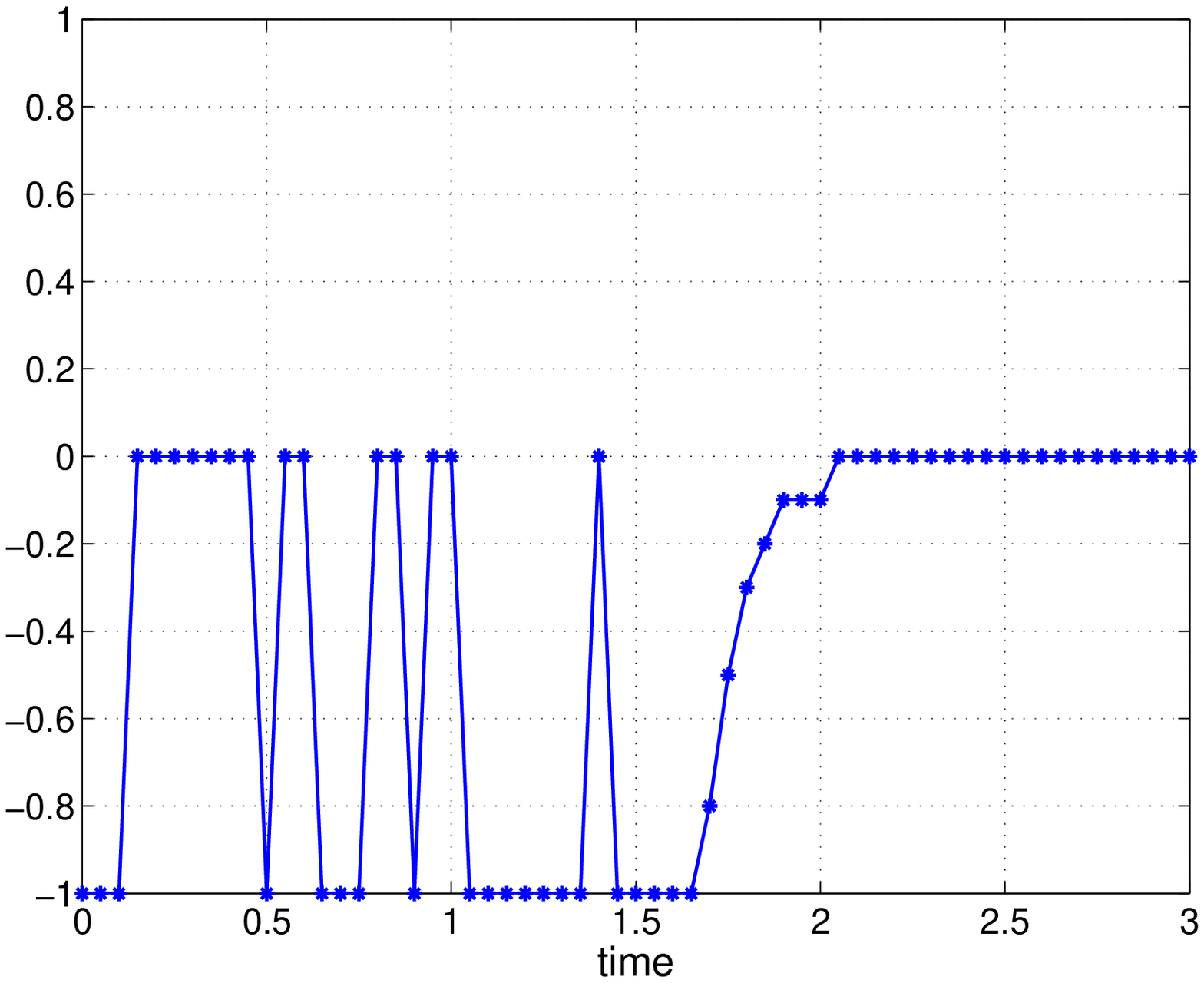}
\caption{Test 3: Optimal HJB-POD state (left) and corresponding optimal control (right) with $|\eta(x)|\leq50\%$ (top) and $|\eta(x)|\leq90\%$ noise (bottom).}
\label{fig3:pert}
\end{figure}
In this example we can see the power of the feedback control, and in particular, the importance of the knowledge of the value function. In both examples, the trajectory is stabilized close to the origin. If we have a look at the optimal control input we can observe a strong chattering. In both cases the optimal control jumps often from -1 to 0. In particular, in the second case, it is possible to observe a stronger chattering due to the high disturbances. Nevertheless, the feedback control is able to stabilize the perturbed system. 

\section{Conclusion and Remarks}
\label{Section7}

In this paper we present a new a-priori error analysis for the coupling between the HJB equation and the POD method. The proposed estimate is presented for the infinite horizon control problems with linear and nonlinear dynamical systems but this approach could be also applied to other optimal control problems provided one has a priori estimates on the approximation based on the HJB equation. The convergence of the method is guaranteed under rather general assumptions on the optimal control problem and some technical assumptions on the dynamics and on the POD approximation. For the latter, it is clear that a clever choice of the snapshots set can play a  crucial role in the estimate in order to reduce the contribution of the POD approximation in the a-priori estimate. Several choices are possible based on greedy techniques or on a previous open-loop approximation, these choices will be investigated in a future paper.  At present, the numerical tests illustrated in the last section confirms our theoretical findings and show the robustness of the Bellman's approach also under strong disturbances of the dynamical system.


\end{document}